\documentclass[12pt,reqno]{amsart}


\usepackage[margin=1in]{geometry}  
\usepackage{graphicx}              
\usepackage{amsmath}               
\usepackage{amsfonts}              
\usepackage{amsthm}                
\usepackage{amssymb}
\usepackage{appendix}


\newtheorem{thm}{Theorem}[section]
\newtheorem{lem}[thm]{Lemma}
\newtheorem{prop}[thm]{Proposition}

\newtheorem{conj}[thm]{Conjecture}

\newcommand{\bd}[1]{\mathbf{#1}}  
\newcommand{\RR}{\mathbb{R}}      
\newcommand{\ZZ}{\mathbb{Z}}      
\newcommand{\CC}{\mathbb{C}}

\newcommand{\PC}{\mathcal{P}}
\newcommand{\Aa}{\mathcal{A}}
\newcommand{\mat}[1]{\left(\begin{matrix} #1 \end{matrix} \right)}  
\newcommand{\srt}{\sqrt{2}} 
\newcommand{\al}[1]{\begin{align}#1\end{align}}
\newcommand{\aln}[1]{\begin{align*}#1\end{align*}}
\newcommand{\ff}{\mathfrak{f}}
\newcommand{\fff}{\tilde{\mathfrak{f}}}
\newcommand{\FF}{\mathfrak{F}}
\newcommand{\rn}{\mathcal{R}_N(n)}
\newcommand{\hrn}{\widehat{\mathcal{R}}_N}
\newcommand{\rnu}{\mathcal{R}_N^U}
\newcommand{\hrnu}{\widehat{\mathcal{R}}_N^U}
\newcommand{\tf}{\mathfrak{t}}
\newcommand{\mn}{\mathcal{M}_N}
\newcommand{\en}{\mathcal{E}_N}
\newcommand{\mnn}{\mathcal{M}_N(n)}
\newcommand{\enn}{\mathcal{E}_N(n)}
\newcommand{\enu}{\mathcal{E}_N^U}
\newcommand{\mnu}{\mathcal{M}_N^{U}}
\newcommand{\sk}[1]{\substack{#1}}
\newcommand{\Tf}{\mathfrak{T}}
\newcommand{\mci}{\mathcal{I}}
\newcommand{\ruf}{\mathcal{R}_{u,\ff}}
\newcommand{\infint}{\int_{-\infty}^{\infty}}
\newcommand{\dinfint}{\int_{-\infty}^{\infty}\int_{-\infty}^{\infty}}

\newcommand{\II}{\mathcal{I}}
\newcommand{\JJ}{\mathcal{J}}
\newcommand{\SM}{\mathcal{S}}
\newcommand{\stwist}{\mathcal{S}(q,q_0,u_0,\xi,\zeta,\ff,\xi^{'},\zeta^{'},\ff^{'})}
\newcommand{\bff}{b_{\ff}}
\newcommand{\bfp}{b_{\ff^{'}}}
\newcommand{\lle}{\ll_{\epsilon}}
\newcommand{\nep}{N^{\epsilon}}

\newcommand{\iqs}{\mathcal{I}_Q^{(\neq,\leq)}}
\newcommand{\ffp}{\mathfrak{f}^{'}}
\newcommand{\KK}{\mathcal{K}}
\newcommand{\llet}{\ll_{\eta}}

\begin{document}

\nocite{*}

\title{On the Local-Global Principle for Integral Apollonian 3-Circle Packings}

\author{Xin Zhang}

\maketitle

\begin{abstract} 
In this paper we study the integral properties of Apollonian-3 circle packings, which are variants of the standard Apollonian circle packings.  Specifically, we study the reduction theory, formulate a local-global conjecture, and prove a density one version of this conjecture.  Along the way, we prove a uniform spectral gap for the congruence towers of the symmetry group.  
\end{abstract}

\section{Introduction}

Apollonian circle packings are well-known planar fractal sets.  Starting with three mutually tangent circles, we inscribe one circle into each curvilinear triangle.  Repeat this process ad infinitum and we get an Apollonian circle packing.  Soddy first observed the existence of some Apollonian packings with all circles having integer curvatures, and we call these packings \emph{integral}.  The systematic study of the integers from such packings was initiated by Graham, Lagarias, Mallows, Wilks, and Yan \cite{GLMWY03} \cite{GLMWY05}.  We first briefly review what is known for integral Apollonian packings.  Fix an integral Apollonian packing $\mathcal{P}$, and let $\mathcal{K}$ be the set of curvatures from $\mathcal{P}$.  Without loss of generality we can assume $\mathcal{P}$ is \emph{primitive} (i.e. the $gcd$ of $\mathcal{K}$ is 1).  We say an integer $n$ is $admissible$ if it passes all local obstructions (i.e. for any $q$, we can find $\kappa\in\KK$ such that $n\equiv\kappa$ (mod $q$)).   Finally, let $\Gamma$ be the orientation-preserving symmetry group acting on $\mathcal{P}$, which is an infinite co-volume Kleinian group.  We have:\\

(1) \emph{The reduction theorem}: Fuchs in her thesis \cite{Fu10} proved that an integer is admissible if and only if it passes the local obstruction at 24.\\

(2) {\emph{The local-global conjecture}}:  Graham, Lagarias, Mallows, Wilks, Yan \cite{GLMWY03} conjectured that every sufficiently large admissible integer is actually a curvature.\\

(3)\emph{A congruence subgroup}: Sarnak \cite{SaLa} observed that there is a real congruence subgroup lying in $\Gamma$.  As a consequence, some curvatures can be represented by certain shifted quadratic forms.\\

(4)\emph{The congruence towers of $\Gamma$ has a spectral gap }(See Page 3 for definition): This fact was proved by Varj\"{u} in the appendix of \cite{BK13}, using Theorem 1.2 of \cite{BGS11}.  \\

(5)\emph{A density one theorem}:  Building on the works of Sarnak \cite{SaLa}, Fuchs \cite{Fu10}, and Fuchs-Bourgain \cite{BF12}, Bourgain and Kontorovich \cite{BK13} proved that almost every admissible integer is a curvature, which is a step towards the local-global conjecture.\\

 \begin{figure}[htbp] 
    \centering
    \includegraphics[width=2.5in]{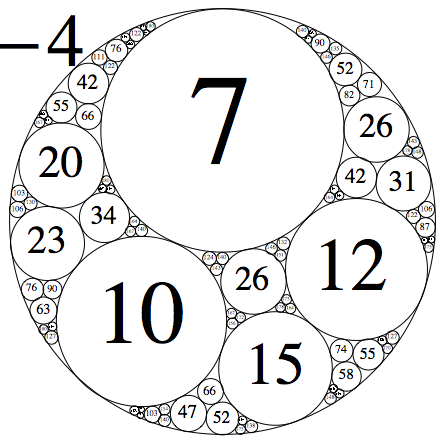} 
   \caption{An integral Apollonian-3 circle packing}
    \label{fig1}
 \end{figure}

In this paper we generalize the above results to the type of circle packings illustrated in Figure \ref{fig1}.   To construct such a packing,  we begin with three mutually tangent circles.  We iteratively inscribe three circles into curvilinear triangles, and obtain a circle packing, which we call an \emph{Apollonian 3-circle packing}, or Apollonian 3-packing. (By comparison,  if we inscribe one circle in each gap,  we obtain a standard Apollonian packing.)  As shown in Figure 1, there also exist integral Apollonian-3 packings.  This was first observed by Guettler-Mallows \cite{GM08}.\\

We carry over the notations $\PC, \KK,\Gamma$ to our Apollonian-3 setting.  We fix a primitive Apollonian-3 packing $\mathcal{P}$, let $\KK$ be the set of curvatures from $\mathcal{P}$, and $\Gamma$ be the orientation-preserving symmetry group acting on $\PC$.   We first state a reduction theorem for $\PC$.    
\begin{thm}{(Reduction Theorem)}\label{reduction} An integer $n$ is admissible by $\PC$ if and only if it passes the local obstruction at 8.
\end{thm}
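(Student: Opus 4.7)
The ``only if'' direction is tautological, so the task is the converse: to show that any integer $n$ passing the local obstruction at $8$ is a curvature residue at \emph{every} modulus $q$. The overall strategy is to reduce admissibility at general $q$ to admissibility at $\gcd(q,8)$, using the Chinese remainder theorem together with a strong-approximation statement for $\Gamma$ modulo primes coprime to $2$.

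More concretely, I would first realize $\Gamma$ as an integral matrix group acting on vectors of curvatures of a ``Descartes-type'' configuration of the Apollonian-3 packing---for example three mutually tangent circles together with the three circles inscribed in one of their curvilinear triangles---so that $\KK$ is recovered as a coordinate projection of the $\Gamma$-orbit of a fixed integer vector $\bd{v}_0$. The problem then becomes: determine the image of this orbit in $\ZZ/q\ZZ$ for arbitrary $q$. A useful auxiliary step is to locate a parabolic (unipotent) subgroup of $\Gamma$ fixing one of the initial circles, since translates of $\bd{v}_0$ by this subgroup shift one chosen curvature by an explicit integer-valued form in the others, giving immediate control of that coordinate residue class by class.

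The crux is a strong-approximation statement: for every prime $p\geq 3$, the reduction of $\Gamma$ modulo $p$ fills out the $\mathbb{F}_p$-points of the orthogonal group $O(Q)$ of the quadratic form $Q$ preserved by $\Gamma$, or at least acts transitively on the admissible curvature residues. I would establish this by exhibiting a finite set of matrix generators of $\Gamma$ and verifying, prime by prime, that their reductions generate $O(Q)(\mathbb{F}_p)$ (or its spinor kernel), in the spirit of Fuchs' analysis \cite{Fu10} for the classical Apollonian setting. For the remaining prime $p=2$, I would carry out an explicit finite computation modulo $8$---and double-check modulo $16$ that no finer obstruction survives---and then invoke Goursat's lemma to combine the local information across all primes.

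The main obstacles are twofold: (i) pinning down the correct form $Q$ and an explicit finite generating set of $\Gamma$ so that the odd-prime strong-approximation step can be verified cleanly, since the Apollonian-3 symmetry group preserves a different Descartes-type quadratic form than in the classical case and its generators must be identified from scratch; and (ii) the $2$-adic bookkeeping showing that $8$, rather than $16$ or higher, is the \emph{exact} obstruction modulus. The latter amounts to a finite but delicate case check, in which one must display, for each admissible residue mod $16$, a concrete element of $\Gamma\bd{v}_0$ realizing it, and then certify that no further $2$-adic congruences are imposed.
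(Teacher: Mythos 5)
Your architecture (CRT split, strong approximation at odd primes, explicit $2$-adic check) is the same shape as the paper's, but several steps would not close as described. At odd primes, the claim that $\Gamma$ reduces onto $O(Q)(\mathbb{F}_p)$ is false at $p=3$ (Lemma~\ref{aproduct} gives fullness only for $(q,6)=1$), and transitivity on $V_3$ has to be shown by a direct computation (Lemma~\ref{c3}); moreover, you only verify transitivity mod $p$, whereas the statement requires it mod $p^m$ for every $m$, which needs a Hensel-type lifting for $p\geq 5$ via regular points on the cone (as in Lemma~\ref{pcase} and Lemma~\ref{cp}) and a separate explicit lift at $p=3$. ``Verifying prime by prime'' is of course infinite; what makes the check finite is Weisfeiler-type strong approximation for $p$ large plus hand checks at small $p$. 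Finally, the structural input the paper actually relies on is not a parabolic subgroup: it is the observation (the Apollonian-$3$ analogue of Sarnak's) that $\Gamma$ contains a genuine real congruence subgroup $\Gamma_{C_3}\leq SL(2,\ZZ)$ of level $4$, together with its conjugates $\Gamma_{C_1},\Gamma_{C_{3'}}$, and Proposition~\ref{gammaproduct} shows that $\Gamma/\Gamma(q)$ is a bounded product of these three; this product structure, not a single unipotent, is what yields uniform control of $\Gamma$ mod every $q$.

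The most serious gap is at $p=2$. Checking mod $8$ and ``double-checking mod $16$'' does not establish what Lemma~\ref{c2} needs, namely $\pi_{2^{m+1}}^{-1}(V_{2^m})=V_{2^{m+1}}$ for \emph{every} $m\geq3$. Since $2$ divides the discriminant of $Q$, Hensel's lemma is not available here and there is no automatic propagation from one power of $2$ to the next; a priori a new obstruction could reappear at $2^m$ for some larger $m$, and a finite check at two moduli cannot rule that out. The paper, following Fuchs' method, produces explicit $m$-dependent matrices $W(m),X(m),Y(m)\in\tilde{\Aa}$ whose action realizes every lift from level $2^m$ to level $2^{m+1}$; this uniform-in-$m$ construction is the actual $2$-adic content of the reduction theorem, and it is the step your plan leaves out.
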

Let $A_{\PC}$ be the set of admissible integers of $\PC$.  In the case of Figure 1, 
$$A_{\PC}=\{n\in\ZZ|n\equiv 2,4,7(\text{mod }8)\}.$$

A general result from Weisfeiler \cite{We84} implies the existence of a number $Q$ which completely determines the local obstruction.  However in practice it's a hard problem to determine $Q$.  In our case $Q=8$.  Technically, we will prove the following lemma, which directly implies Theorem \ref{reduction}.   Let $\KK_d$ be the reduction of $\mathcal{K}$ (mod $d$), and ${\rho_{p^m}}$ be the natural projection from $\ZZ/p^{m+1}\ZZ$ to $\ZZ/p^{m}\ZZ$.  Write $d=\prod_{i}p_i^{n_i}$, then we have

\begin{lem}\label{local}\text{}\\
{\rm(1)} $\mathcal{K}_q\cong \prod_{i}\mathcal{K}_{p_i^{n_i}}$,\\
{\rm(2)} $\mathcal{K}_{p^{m}}=\ZZ/p^{m}\ZZ$ for $p\geq 3$ and $m\geq 0$,\\
{\rm(3)} $\rho_{2^{m+1}}^{-1}(\mathcal{K}_{2^{m}})=\mathcal{K}_{2^{m+1}}$ for $p=2$ and $m\geq 3$.
\end{lem}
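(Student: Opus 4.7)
The plan is to follow, in outline, the strategy of Fuchs \cite{Fu10} for the standard Apollonian packing, adapted to the Apollonian-3 setting via the Guettler-Mallows Descartes-type quadratic form. The first step is to realize $\mathcal{K}$ as the set of coordinates of integer vectors in a single $\Gamma$-orbit: after fixing an initial configuration of four mutually tangent / inscribed circles in $\mathcal{P}$, form the vector $v_0\in\ZZ^4$ of their curvatures, identify $\Gamma$ with a subgroup of $\mathrm{GL}_4(\ZZ)$ preserving the Apollonian-3 Descartes form, and observe that $\mathcal{K}$ is the union of coordinates of $\Gamma v_0$. Then $\mathcal{K}_d$ is precisely the set of coordinates of $\widetilde{\Gamma}_d v_0\pmod d$, where $\widetilde{\Gamma}_d$ is the image of $\Gamma$ in $\mathrm{GL}_4(\ZZ/d\ZZ)$.

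For part (1), the inclusion $\mathcal{K}_q\hookrightarrow\prod_i \mathcal{K}_{p_i^{n_i}}$ is immediate from the Chinese Remainder Theorem applied to integers. The reverse inclusion is the content: one must show that an arbitrary compatible tuple of residues actually lifts to a single curvature mod $q$. This reduces to the statement that the image of $\Gamma$ in $\prod_i \mathrm{GL}_4(\ZZ/p_i^{n_i}\ZZ)$ equals $\prod_i \widetilde{\Gamma}_{p_i^{n_i}}$. Since $\Gamma$ is Zariski-dense in the orthogonal group of the Descartes form (a feature one verifies by exhibiting enough generators), Weisfeiler's strong approximation theorem \cite{We84} furnishes this decomposition at all but finitely many primes, and one handles the remaining primes by direct computation with explicit generators.

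For part (2), I proceed by induction on $m$. The case $m=0$ is trivial. For the inductive step, given that $\mathcal{K}_{p^m}=\ZZ/p^m\ZZ$ for an odd prime $p$, I lift: any target residue mod $p^{m+1}$ already has some $\gamma\in\Gamma$ with $(\gamma v_0)_j$ matching it mod $p^m$, and one perturbs $\gamma$ by an element of the principal congruence kernel $\Gamma(p^m)$ to adjust the $p^m$-digit arbitrarily. The existence of enough such perturbations follows because the image of $\Gamma(p^m)$ in $\Gamma(p^m)/\Gamma(p^{m+1})$ surjects onto the $\mathbb{F}_p$-Lie algebra of the relevant orthogonal group, again by strong approximation. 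The small prime $p=3$ requires a separate direct check with generators to certify that no exceptional behavior occurs at level $3$ (unlike the standard Apollonian case, where there is a genuine obstruction at $3$).

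Part (3), the $2$-adic stabilization, is the crux of the lemma and the main obstacle. The precise claim is that the tower $\widetilde{\Gamma}_{2^m}$ stabilizes in the sense that $\widetilde{\Gamma}_{2^{m+1}}$ is the full preimage of $\widetilde{\Gamma}_{2^m}$ in $\mathrm{GL}_4(\ZZ/2^{m+1}\ZZ)$ for $m\geq 3$; once this is established, the orbit $\widetilde{\Gamma}_{2^{m+1}}v_0$ covers $\rho_{2^{m+1}}^{-1}(\widetilde{\Gamma}_{2^m}v_0)$, which is exactly the statement (3). The stabilization itself is proved by producing, for each $m\geq 3$, an explicit element $\gamma\in\Gamma$ whose reduction lies in $\Gamma(2^m)\setminus\Gamma(2^{m+1})$ and whose action on $v_0$ shifts a coordinate by exactly $2^m$ modulo $2^{m+1}$. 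This is a finite calculation with the Apollonian-3 generators, and the reason the stabilization happens at $m=3$ rather than earlier is that the exceptional $2$-adic behavior of the Descartes form persists through levels $2$ and $4$ and dies precisely at level $8$; verifying this explicitly, and no earlier, is the hardest piece of the argument and accounts for the ``$8$'' in Theorem \ref{reduction}.
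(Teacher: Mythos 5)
Your outline captures the right high-level strategy --- realize $\mathcal{K}$ as coordinates of a $\Gamma$-orbit, factor the local analysis over primes, and do an explicit $2$-adic lifting \`a la Fuchs to prove saturation at level $8$ --- and your description of part (3) agrees with what the paper actually does (Lemma \ref{c2}, using explicit words such as $W(m)=(S_{1'23}S_{1'2'3})^{2^{m-3}}$ that lie in $\Gamma(2^m)\setminus\Gamma(2^{m+1})$ and shift a coordinate of $\bd r$ by $2^{m-1}$). However, there is a genuine gap in the way you propose to handle parts (1) and (2). You invoke Weisfeiler's strong approximation to get the product decomposition ``at all but finitely many primes'' and say you will handle ``the remaining primes by direct computation,'' but Weisfeiler's theorem is ineffective: it does not tell you which primes are exceptional, so there is no finite list to check. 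Moreover $SO_Q$ is not simply connected, so one must lift to a spin cover anyway --- another detail your sketch omits. The paper sidesteps both problems entirely by never invoking strong approximation in the proof. Its key structural input, which your sketch does not mention, is that $\Gamma$ contains a genuine congruence subgroup of $SL(2,\ZZ)$, namely $\Gamma_{C_3}=\langle M_1,M_3,M_5\rangle$ (and its conjugates $\Gamma_{C_1},\Gamma_{C_{3'}}$), and that $\Gamma/\Gamma(q)$ is a \emph{bounded} product of the images of these three subgroups (Proposition \ref{gammaproduct}, the constant being $10^9$). This is established by explicit matrix computation for $p=2,3$ and by a four-squares-plus-Hensel argument for $p\geq5$ (Lemmas \ref{pcase}, \ref{p23}), and it yields the CRT splitting $\Gamma/\Gamma(q)\cong\prod_i\Gamma/\Gamma(p_i^{n_i})$ for \emph{all} $q$, with effective constants. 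Part (1) then follows, and part (2) for $p\geq5$ follows not from an induction on $m$ as you suggest, but simply from the fact that $\Aa$ reduces onto the full group $SO_Q(\ZZ/p^m\ZZ)$, which acts transitively on the solution set $C_{p^m}$; only $p=3$ requires a lifting argument (Lemma \ref{c3}, done by exhibiting $27$ explicit representatives $T_i$ and iterating). Without the congruence-subgroup mechanism your plan leaves the $p\geq5$ case unproven, since you have no way to certify that your direct computations have covered every exceptional prime.
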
 

Based on Theorem \ref{reduction}, we formulate the following local-global conjecture:
\begin{conj}{(Local-global Conjecture)} Every sufficiently large admissible integer from $\PC$ is a curvature.  Or equivalently,
$$\#\{n\in \KK|n\leq N \}=\#\{n\in A_{\PC}|0<n\leq N \}+O(1).$$
\end{conj}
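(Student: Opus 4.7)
The plan is to take the density-one theorem that the paper establishes and attempt to eliminate the remaining exceptional set, pushing it from $o(N)$ down to $O(1)$. This is the same leap that remains open for the classical Apollonian packing, so a serious attempt must bring in something beyond the circle method used to prove density-one; what follows identifies precisely where the additional input is needed.

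The first step is to parametrize curvatures by shifted binary quadratic forms. Adapting Sarnak's observation (item (3) of the introduction) to the Apollonian-3 setting, I would locate a finite family of inhomogeneous quadratic forms $Q_j(x,y)+c_j$ whose values, as $(x,y)$ ranges over certain $\Gamma$-orbits of integer vectors on the variety of circles in $\PC$, produce every integer in $\KK$; Theorem \ref{reduction} and Lemma \ref{local} guarantee that the union of admissible residues modulo $8$ produced by these forms is exactly $A_\PC$. The second step is to count representations of a given admissible $n\leq N$ by these forms via the Hardy--Littlewood circle method on $\Gamma$. The major-arc contribution is handled using the uniform spectral gap of the congruence tower (item (4)), yielding an asymptotic
\aln{
r_\PC(n) = M(n,N) + E(n,N),
}
where $M(n,N)=\mathfrak{S}(n)\,\mathfrak{J}(n,N)$ factors into a singular series $\mathfrak{S}(n)$ that is bounded below by an absolute positive constant for $n\in A_\PC$ (this lower bound is produced by the local data of Lemma \ref{local}, factored into $p$-adic densities) and a positive archimedean density $\mathfrak{J}(n,N)$ that grows as a power of $N$. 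The third step, which is the conjecture itself, is to conclude $r_\PC(n)\geq 1$ for every sufficiently large admissible $n$ by showing $|E(n,N)|<M(n,N)$ \emph{pointwise} in $n$, rather than only on average.

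The main obstacle is precisely this uniform minor-arc bound. The available tools --- the bilinear large sieve together with the spectral gap in \cite{BK13} --- deliver a power-saving only after averaging over $n$, which is exactly what produces density-one and nothing more. Promoting this to a pointwise estimate that handles every admissible $n$ individually appears to require a genuinely new input, such as a Hecke-type multiplicativity for orbit sums over the thin group $\Gamma$, an amplification argument exploiting the fact that each curvilinear triangle in an Apollonian-3 packing produces \emph{three} inscribed circles (giving more independent $\Gamma$-orbits to work with than in the classical case, and a structural feature specific to this paper's setting), or an arithmetic statistics breakthrough on values of thin-orbit quadratic forms along the lines of an Erd\H{o}s--Ko--Rado-type combinatorial input on the congruence quotients. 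Absent such an advance, the conjecture as stated sits at the same level of difficulty as its classical analogue, and the density-one theorem of the paper represents the strongest unconditional statement one can currently hope to prove.
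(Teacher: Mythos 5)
You have correctly recognized that this statement is a \emph{conjecture}, not a theorem; the paper explicitly concedes on page~2 that ``the current technology is not enough to deal with this conjecture'' and proceeds to prove only the density-one version, Theorem~\ref{mainthm}. There is therefore no proof in the paper for you to be compared against, and a submission that pretended to close the gap would have been wrong. Your write-up is an honest assessment rather than a proof, and as such it is accurate: the ensemble/circle-method machinery the paper imports from \cite{BK13} (the spectral gap for the congruence towers, the bisector counting of \cite{Vi13}, the bilinear structure in the minor-arc analysis) produces a power-saving \emph{on average over $n$}, which is precisely what an $\ell^2$ bound on $\mathcal{E}_N^U$ delivers and all that it delivers. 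Upgrading to the pointwise statement $|\mathcal{E}_N(n)|<\mathcal{M}_N(n)$ for every admissible $n$ is the genuine obstruction, exactly as in the classical Apollonian case, and you have named it correctly.

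One small caution on your sketch: you write that the singular series $\mathfrak{S}(n)$ is bounded below by an absolute positive constant for admissible $n$, but the paper's Theorem~\ref{majorarcanal} only establishes $N^{-\epsilon}\ll_\epsilon \mathfrak{S}_{Q_0}(n)\ll_\epsilon N^{\epsilon}$ for admissible $n$ in the relevant range, i.e.\ a bound that is allowed to degrade by an arbitrarily small power of $N$. This is harmless for the density-one argument but would matter if you were trying to make a pointwise comparison of main and error terms, so it is worth noting that even the major-arc lower bound is not as clean as your sketch assumes. Your three candidate ``new inputs'' (Hecke-type multiplicativity on thin-group orbits, amplification using the three-fold inscription specific to Apollonian-3 packings, or stronger arithmetic input on values of thin-orbit quadratic forms) are reasonable speculations, but none is developed in the paper or, to my knowledge, anywhere else, so they should be read as research directions rather than as ingredients of a proof.
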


However, it seems that the current technology is not enough to deal with this conjecture.  Instead,  we prove a density one theorem: 
\begin{thm}{(Density One Theorem)}\label{mainthm}
There exists $\eta>0$ such that $$\#\{n\in \KK|n\leq N \}=\#\{n\in A_{\PC}|0<n\leq N \}+O(N^{1-\eta}).$$
.
\end{thm}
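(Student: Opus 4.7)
The plan is to follow the variance strategy of Bourgain--Kontorovich \cite{BK13}, adapted to the Apollonian-3 setting. I would first locate a ``Descartes-type'' quadratic relation among the curvatures of a mutually tangent configuration in $\mathcal{P}$, which produces the analogue of the shifted integral binary quadratic form used in the Apollonian case. Concretely, fix a base circle $C_0 \in \mathcal{P}$ of curvature $b_0$, and let $\Gamma_0 < \Gamma$ be the Fuchsian subgroup stabilizing $C_0$ (setwise). Using Sarnak's congruence subgroup observation (item (3) of the introduction), $\Gamma_0$ should contain a congruence subgroup of an arithmetic Fuchsian group, so that the curvatures of circles in the $\Gamma_0$-orbit inside $C_0$ take the form $-b_0 + \mathfrak{f}(\gamma v_0)$ for a shifted integral binary form $\mathfrak{f}$ and a fixed primitive vector $v_0$. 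Summing over a finite choice of base circles from a fundamental domain of $\Gamma$ then captures, up to bounded multiplicity, all curvatures of $\mathcal{P}$ of size at most $N$.

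Having set this up, I would define a representation counting function
\aln{\mathcal{R}_N(n) = \sum_{\gamma \in \Gamma_0,\ \|\gamma\| \leq T} \sum_{\text{base circles}} \mathbf{1}\bigl[-b_0 + \mathfrak{f}(\gamma v_0) = n\bigr] \, w(\gamma),}
with $T$ chosen so that the support lies essentially in $[1,N]$, and a smooth weight $w$ supporting the spectral analysis. Showing $\#\{n \in \mathcal{K} : n \leq N\} \geq \#\{n \in A_{\mathcal{P}} : n \leq N\} - O(N^{1-\eta})$ reduces, by Chebyshev's inequality, to establishing a first-moment asymptotic
\aln{\sum_{n \in A_\mathcal{P},\, n \leq N} \mathcal{R}_N(n) \gg N}
together with a second-moment upper bound
\aln{\sum_{n \in A_\mathcal{P},\, n \leq N} \mathcal{R}_N(n)^2 \ll N^{1+\epsilon}.}
The matching lower bound on the first moment follows from equidistribution of the $\Gamma_0$-orbit combined with the reduction theorem (Theorem \ref{reduction}), which guarantees the natural local densities cover precisely $A_{\mathcal{P}}$.

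The heart of the argument is the second moment, which I would attack via the Hardy--Littlewood circle method. Expanding $\mathcal{R}_N(n)^2$ via additive characters leads to an integral over the unit circle of $|\widehat{\mathcal{R}}_N(\theta)|^2$. On the major arcs $\theta = a/q + \beta$ with $q$ small, one inserts a congruence condition on $\gamma$, which by the uniform spectral gap (item (4)) factors as a main term plus a power-saving error; the main term contributes $N^{1+\epsilon}$ after summing in $n$. On the minor arcs, I would use a Kloosterman-sum / Weil-type cancellation at each level, exactly as in \cite{BK13}, leveraging the uniform spectral gap established in this paper to obtain savings uniformly in the modulus and thereby bound the minor arc contribution by $N^{1+\epsilon}$ as well.

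The main obstacle is the minor arc estimate. The uniform spectral gap is the indispensable input, but converting it to a power-saving bound on $\int_{\text{minor}} |\widehat{\mathcal{R}}_N(\theta)|^2 \, d\theta$ requires delicate bilinear forms / Kloosterman-refinement arguments at every modulus $q$ up to a small power of $N$, and the Apollonian-3 combinatorics (three inscribed circles per gap rather than one) complicates both the orbital structure of $\Gamma$ and the explicit description of $\Gamma_0$ acting on the base circle. A secondary technical point is verifying that the major-arc singular series factors cleanly over primes and agrees with the density of $A_{\mathcal{P}}$ predicted by Lemma \ref{local}, which I would handle using the explicit local description in parts (2) and (3) of that lemma.
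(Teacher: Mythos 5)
Your toolkit --- shifted binary quadratic form from the Fuchsian congruence subgroup, circle method, uniform spectral gap, Kloosterman $3/4$ bound --- matches the paper's, but the moments framing is a genuine gap that would not produce the density-one statement. A first-moment lower bound $\sum_n \mathcal{R}_N(n) \gg N$ together with an \emph{uncentered} second-moment upper bound $\sum_n \mathcal{R}_N(n)^2 \ll N^{1+\epsilon}$, fed through Cauchy--Schwarz/Paley--Zygmund, yields only
\[
\#\{n \leq N : \mathcal{R}_N(n) > 0\} \geq \frac{\left(\sum_n \mathcal{R}_N(n)\right)^2}{\sum_n \mathcal{R}_N(n)^2} \gg N^{1-\epsilon},
\]
a positive-proportion count --- far from $\#\{n \in A_{\PC},\, n \leq N,\, n \notin \KK\} \ll N^{1-\eta}$, since the complement could still have size close to $N$. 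The uncentered second moment is dominated by the main-term contribution and gives no control on the variance. What the paper actually proves is a \emph{pointwise} lower bound $\mathcal{M}_N(n) \gg N^{-\epsilon}T^{\delta-1}$ on the major-arc (main) term for every admissible $n\in(N/2,N]$ (Theorem \ref{majorarcanal}, via the singular series and Lemma \ref{bkcao}), combined with an $\ell^2$ power saving on the \emph{centered} error $\mathcal{E}_N^U$ over the minor arcs (Theorem \ref{minorthm}), transferred to an $\ell^1$ bound on $\mathcal{E}_N = \mathcal{R}_N - \mathcal{M}_N$ through Cauchy--Schwarz and the comparison Lemmas \ref{sdm}, \ref{smm}; each exceptional admissible $n$ then forces $|\mathcal{E}_N(n)| \gg N^{-\epsilon}T^{\delta-1}$, and dividing the $\ell^1$ bound by this pointwise lower bound gives $|Z| \ll N^{1-\eta}$. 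Your normalization is also inconsistent with the setup: the expected representation count is $T^{\delta-1}$, so the first moment is $\asymp N T^{\delta - 1}$, not $\asymp N$.

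The ensemble design is a second problem. Taking $\gamma$ over a ball of norm $\asymp N^{1/2}$ in the Fuchsian stabilizer $\Gamma_0$ --- even over finitely many base circles --- produces the integer values of finitely many shifted binary quadratic forms, which form a density-zero subset of $[1,N]$, so $\mathcal{R}_N(n)$ would vanish on nearly all admissible $n$ no matter how the moments come out. The paper's ensemble is instead a product: a ``thin'' set $\FF \subset \Aa$ of Frobenius norm $T = N^{1/200}$, supplying $\asymp T^\delta$ \emph{distinct} shifted forms $\ff_\gamma$ (the crucial point being $\delta > 1$), multiplied by the congruence part $x,y \asymp X$ with $TX^2 = N$. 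Taking $T$ a small power is what leaves room for the Kloosterman saving, together with the $\ff = \ff'$ versus $\ff \neq \ff'$ dichotomy in the off-diagonal sums, to beat the trivial minor-arc bound. You would also need the truncated M\"obius mollification (the $U$ parameter and $\mathcal{R}_N^U$) to detach the coprimality constraint $(x,2y)=1$ from the arc integrals --- a technicality your sketch omits but which is load-bearing in the $\ell^1$ transfer steps.
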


To deduce Theorems \ref{reduction} and \ref{mainthm}, we need to study the symmetry group $\tilde{\Gamma}$, or more conveniently its orientation preserving subgroup $\Gamma$.  The group $\tilde{\Gamma}\subset\text{Isom}(\mathbb{H}^3)$  is generated by eight reflections corresponding to eight mutually disjoint hemispheres, and our Apollonian-3 packing can be realized as the limit set of a point orbit under $\tilde{\Gamma}$ (see Figure \ref{fig:fundamentaldomain}).  Therefore $\Gamma$ is geometric finite.  It is clear that $\Gamma\backslash \mathbb{H}^3$ has infinite volume, so $\Gamma$ is a thin subgroup of $SL(2,\CC)$.  The local structure of $\Gamma$ will lead to Theorem \ref{reduction}.  Here we exploit a crucial fact that $\Gamma$ contains a real congruence subgroup, which is the analogue of Sarnak's observation for the Apollonian group \cite{SaLa}.  This congruence subgroup also implies that some curvatures can be represented by certain shifted binary quadratic forms (See Theorem \ref{shiftedquadraticform}), which is a key starting point for proving Theorem \ref{mainthm}.\\

    Another crucial ingredient for Theorem \ref{mainthm} is a (geometric) \emph{spectral gap} for $\Gamma$, as we explain now.  For any positive integer $q$, Let $\Gamma(q)$ be the principle congruence subgroup of $\Gamma$ at $q$ (i.e.  $\Gamma(q)=\{\gamma\in\Gamma |\gamma\equiv I\text{(mod }q)\})$.   Let $\Delta$ be the hyperbolic Laplacian operator associated to the metric $ds^2=\frac{dx^2+dy^2+dz^2}{z^2}$ on $\mathbb{H}^3$:
$$\Delta=-z^2(\frac{\partial^2}{\partial x^2}+\frac{\partial^2}{\partial y^2}+\frac{\partial^2}{\partial z^2})+z\frac{\partial}{\partial z}$$

The operator $\Delta$ is symmetric and positive definite on $L^2(\Gamma(q)\backslash \mathbb{H}^3)$ with the standard inner product.  From Larman \cite{La67} we know that the Hausdorff dimension $\delta$ of our packing $\PC$ is $>1$.  Hence Patterson-Sullivan theory \cite{Pa76}\cite{Su84}, together with Lax-Phillips\cite{LP82} tell us that for each $q$, there are only finitely many exceptional eigenvalue for $\Delta$ acting on $L^2(\Gamma(q)\backslash \mathbb{H}_3)$, and the base (smallest) eigenvalue $\lambda_0(q)$ of $\Delta$ on $L^2(\Gamma(q)\backslash \mathbb{H}^3)$ is equal to $\delta(2-\delta)$. \\

However, a priori the second smallest eigenvalue $\lambda_1(q)$ might get arbitrarily close to $\lambda_0(q)$.  But in the case of $\Gamma$, this phenomenon does not happen:
 \begin{figure}[htbp] 
    \centering
    \includegraphics[width=4in]{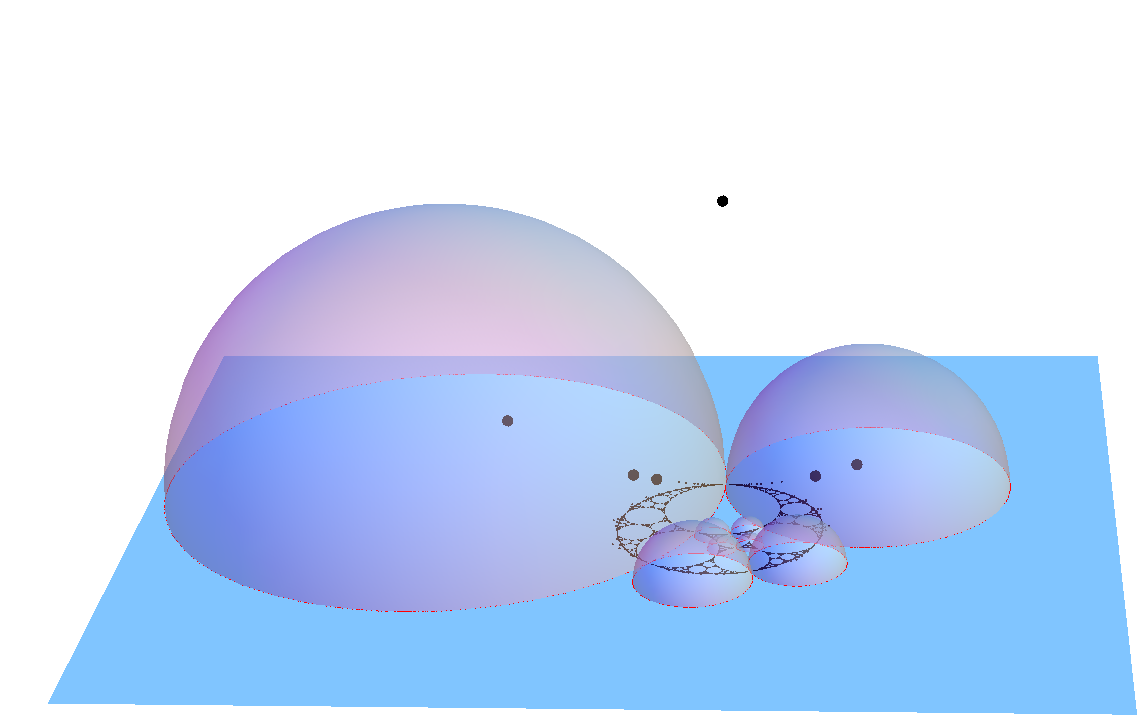} 
   \caption{The fundamental domain for $\tilde{\Gamma}$ and the orbit of an point under $\tilde{\Gamma}$}
    \label{fig:fundamentaldomain}
 \end{figure}
\begin{thm}{(Spectral Gap)}\label{spec}
 There exists $\delta_0>0$ such that for all $q$, 
$$\lambda_1(q)-\lambda_0(q)\geq \delta_0$$
\end{thm}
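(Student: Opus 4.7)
The plan is to adapt the Bourgain-Gamburd-Sarnak machinery, as carried out by Varj\'u in the appendix of \cite{BK13} for the standard Apollonian group. First I would realize $\Gamma$ arithmetically inside $SL(2,\mathcal{O})$ for an appropriate ring of integers $\mathcal{O}$; the natural candidate is $\mathcal{O} = \mathbb{Z}[i]$, since the fundamental hemispheres of $\tilde{\Gamma}$ (cf.\ Figure \ref{fig:fundamentaldomain}) can be chosen with centers at Gaussian rational points, yielding explicit M\"obius generators with matrix entries in $\mathbb{Z}[i]$. Zariski density of (the image of) $\Gamma$ in $SL_2$ would then be checked directly from these generators by ruling out a common invariant line or invariant pair of lines in $\mathbb{C}^2$. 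Combined with the strong approximation theorem (Weisfeiler \cite{We84}), this forces the reduction maps $\Gamma \to SL(2,\mathcal{O}/q\mathcal{O})$ to be surjective for all $q$ coprime to a finite set of bad primes.

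With strong approximation established, I would invoke Theorem 1.2 of \cite{BGS11} to conclude that the Cayley graphs of $\Gamma/\Gamma(q)$ with respect to a fixed symmetric generating set of $\Gamma$ form a family of expanders, giving a uniform combinatorial spectral gap $\varepsilon_0 > 0$ on the nontrivial spectrum of the associated averaging operators. To pass from this combinatorial gap to the geometric gap asserted in the theorem, I would apply the Brooks-type transfer principle in the form used in \cite{BK13}: since $\delta > 1$ by Larman \cite{La67}, Patterson-Sullivan theory \cite{Pa76}\cite{Su84} together with Lax-Phillips \cite{LP82} already isolates $\lambda_0(q) = \delta(2-\delta)$ as a simple eigenvalue on $L^2(\Gamma(q)\backslash \mathbb{H}^3)$, with eigenspace spanned by the lift of the base eigenfunction on $\Gamma\backslash \mathbb{H}^3$. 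A test function orthogonal to this lift decomposes over the finite fibers of the cover $\Gamma(q)\backslash \mathbb{H}^3 \to \Gamma\backslash \mathbb{H}^3$, and bounding its Rayleigh quotient via the combinatorial gap $\varepsilon_0$ produces the desired uniform $\delta_0$, independent of $q$.

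The main obstacle lies in the arithmetic setup. The expansion input from \cite{BGS11} and the transfer principle both apply essentially verbatim once the ingredients are in place, but the Apollonian-3 group has a different generating set, a different fundamental domain, and potentially different bad primes than the standard Apollonian group --- indeed, the local obstruction in Theorem \ref{reduction} lives at $8$ rather than $24$, strongly suggesting that $2$ is the ramified prime to watch. Identifying the correct ring $\mathcal{O}$, producing an explicit embedding $\Gamma \hookrightarrow SL(2,\mathcal{O})$ with Zariski dense image, and pinning down the exact set of bad primes where strong approximation might fail is where the real work will be; after that, the argument should reduce to a direct appeal to \cite{BGS11} and the transfer principle, mirroring Varj\'u's treatment of the Apollonian case.
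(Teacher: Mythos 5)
Your plan as described would not yield Theorem \ref{spec} for all moduli $q$, which is the whole point of the statement. The expansion results you want to invoke — Theorem 1.2 of \cite{BGS11} for subgroups of $SL(2,\mathbb{Z})$, or its generalization by Golsefidy--Varj\'u \cite{GV12} — give a uniform combinatorial gap only as $q$ ranges over \emph{squarefree} integers, and the paper explicitly flags this limitation right after the statement of the theorem. Strong approximation plus Zariski density (steps you outline) is indeed the input to those theorems, but the conclusion is still restricted to squarefree $q$. Nothing in your proposal addresses prime powers, and so the argument stops short of the claim.

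What the paper (and Varj\'u's appendix to \cite{BK13}, which you cite as your model but do not actually reproduce) does instead is exploit the special arithmetic structure of $\Gamma$: it contains three conjugate subgroups $\Gamma_{C_3},\Gamma_{C_1},\Gamma_{C_{3'}}$ that are honest real congruence subgroups of $SL(2,\mathbb{Z})$ (Proposition on level 4), for which Selberg's $\frac{3}{16}$ theorem supplies a spectral gap at \emph{every} level $q$, squarefree or not. Proposition \ref{gammaproduct} shows that $\Gamma/\Gamma(q)$ is a bounded-length product of these three subgroups mod $q$, and Varj\'u's combination lemma (Lemma \ref{varju}) then upgrades the combinatorial gaps of the factors to a combinatorial gap of the full quotient, uniformly in $q$. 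The transfer between combinatorial and geometric gaps (via \cite{BGS11} for Fuchsian groups and Kim \cite{Ki11} for the Kleinian case) bookends this. Your proposal omits both the congruence-subgroup observation and the bounded-product/combination step, which are precisely what let the paper escape the squarefree restriction.

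Two smaller points. First, the natural ring here is $\mathbb{Z}[\sqrt{2}\,i]$, not $\mathbb{Z}[i]$: the explicit generators $M_1,\dots,M_7$ have entries in $\mathbb{Z}[\sqrt{2}\,i]$, and the spin homomorphism is built around the quadratic form $Q$ with discriminant involving $2$, so $\mathbb{Z}[i]$ is the wrong arithmetic home for $\Gamma$. Second, your reading of Theorem 1.2 of \cite{BGS11} as directly producing expanders for all $q$ is inconsistent with the paper's own use of that reference, which treats it as part of the combinatorial-to-geometric transfer rather than as a source of expansion over arbitrary moduli.
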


For the modular group $SL(2,\ZZ)$, the celebrated Selberg $\frac{3}{16}$ Theorem says that $\delta_0\geq \frac{3}{16}$.  For an arbitrary finitely generated subgroup of $SL(2,\ZZ)$, a spectral gap when $q$ is ranging over square free numbers was obtained by Bourgain-Gamburd-Sarnak \cite{BGS11}.  Recently this result was extended to much more general groups by    
Golsefidy-Varj\'u\cite{GV12}, again over squarefree numbers .  But for our need, we need to require $q$ to exhaust all integers. \\

We then follow the strategy in \cite{BK13} to prove Theorem \ref{mainthm}.  The main approach is the Hardy-Littlewood circle method.  The spectral gap given in Theorem \ref{spec},  together with the bisector counting result from Vinogradov \cite{Vi13},  allows us to do various (thin) lattice point counting restricted to certain regions of $SL(2,\CC)$, effectively and with uniform rates over the congruence towers $\Gamma(q)$ and their cosets.  All these are encoded in Lemma 5.2, Lemma 5.3 and Lemma 5.4 from Bourgain and Kontorovich's work on Apollonian packings \cite{BK13}.  These Lemmas can be modified word by word to fit our setting.  Another ingredient which appears in the minor arc analysis is the elementary $\frac{3}{4}$ bound for the Kloosterman sums.   
 \\

$\bd{Plan\text{ } for\text{ }  the\text{ }  paper}:$ In \S 2 we discuss the local properties of $\KK$,  these properties are revealed by $\Gamma$ and its subgroups.  Theroems \ref{reduction} and \ref{spec} are proved at the end of this section.   The main goal of \S 3 is to prove Theorem \ref{mainthm}.  In \S 3.1 we introduce the main exponential sum and give an outline of the proof of Theorem \ref{mainthm}.  In \S 3.2 we analyze the major arcs, and from \S 3.3 to \S 3.5 we give bounds for three parts of the minor-arc integrals. Finally in \S 3.6 we conclude our proof.\\

$\bd{Notation}$:  We adopt the following standard notations.  We write $e^{2\pi i x}$ as $e(x)$, and $e^{\frac{2\pi i x}{q}}$ as $e_q(x)$.  The relation $f\ll g$ means that $f=O(g)$, and $f\asymp g$ means $f\ll g$ and $g\ll f$. The Greek letter $\epsilon$ denotes an arbitrary small positive number,  and $\eta$ denotes a small positive number which appears in several contexts.  We assume that each time when $\eta$ appears,  we let $\eta$ not only satisfy the current claim,  but also satisfy the claims in all previous contexts. The symbols $p$ and $p_i$ always denote a prime. The relation $p^j|| n$ means $p^j| n$ and $p^{j+1}\nmid n$.  The expression $\sum_{r(q)}^{'}$ means sum over all $r(\text{mod }q)$ where $(r,q)=1$.  For a finite set $Z$,  its cardinality is denoted by $|Z|$ or $\#Z$.  For an algebraic group $\Gamma$ (or $\Aa$, $\tilde{\Aa}$) over $\ZZ$, $\Gamma(q)$ (or $\Aa(q)$, $\tilde{\Aa}(q)$) denotes its principle congruence subgroup of level $q$.  Without further mentioning,  all the implied constants depend at most on the given packing.  
\section{Local Property}
 \subsection{Apollonian 3-Group and Its Subgroups} 
We start with three mutually tangent circles $C_1,C_2,C_3$ (suppose $C_1$ is bounding the other two).  In each of the two gaps formed by these three circles, there's a unique way to inscribe three more circles, in a way that each of these six circles is tangent to four other circles and disjoint to the last one.  Let's say $C_{1^{'}},C_{2^{'}},C_{3^{'}}$ is one such inscription (see Figure \ref{fig:example}). It is known that their curvatures $\kappa_1,\kappa_2,\kappa_3,\kappa_{1^{'}},\kappa_{2^{'}},\kappa_{3^{'}}$ satisfy the following algebraic relations \cite{GM08}:
\begin{align}
&\kappa_1+\kappa_{1^{'}}=\kappa_2+\kappa_{2^{'}}=\kappa_3+\kappa_{3^{'}}:=2w\label{sum}\\
&Q(\kappa_1,\kappa_2,\kappa_3,w)=w^2-2w(\kappa_1+\kappa_2+\kappa_3)+\kappa_1^2+\kappa_2^2+\kappa_3^2=0\label{quadratic}
\end{align}

The M\"obits inversion via the dual circle of $C_1,C_2,C_3$ takes $C_{1^{'}},C_{2^{'}},C_{3^{'}}$ to three other circles $C_{1^{''}},C_{2^{''}},C_{3^{''}}$, which gives the other way of inscribing.  There are two solutions for $w$ in \eqref{quadratic},  which correspond exactly to two ways of filling. 
 \begin{figure}[htbp] 
    \centering
    \includegraphics[width=1.7in]{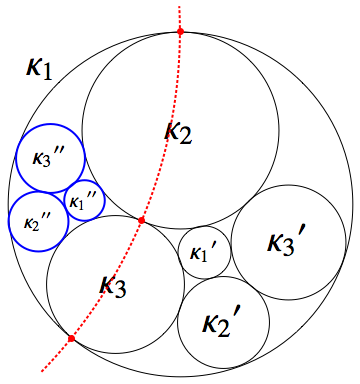} 
   \caption{Reflection via the dual circle of $C_1,C_2,C_3$ }
    \label{fig:example}
 \end{figure}

We associate a quadruple $\bd{r}=\left\langle\kappa_1,\kappa_2,\kappa_3, w\right\rangle^{T}$ to the six circles $C_1,C_2,C_3,C_{1^{'}},C_{2^{'}},C_{3^{'}}$, which we call the \emph{root circles}.  There are eight gaps formed by circular triangles.  Each gap corresponds to one M\"obius inversion,  which takes three of the six root circles to three new circles and fixes the rest three.  We associate a vector $\bd{v}=\left\langle x,y,z,w^{'}\right\rangle^{T}$ to this new collection of six circles, where $x,y,z$ are the curvatures of the circles which are the images of $C_1,C_2,C_3$ under the reflection, and $w^{'}$ is the sum of any pair of disjoint circles from this new collection, as $w$ in \eqref{sum}.  From \eqref{sum} and \eqref{quadratic} it follows that $x,y,z,w^{'}$ has linear dependance on $\kappa_1,\kappa_2,\kappa_3, w$.   Eight gaps correspond to eight linear transformations which take $\bd{r}$ to $\bd{v}$:  

 \begin{align}\nonumber
 &S_{123} = \left(
    \begin{matrix}
      1 & 0 & 0 & 0 \\
      0 & 1 & 0 & 0 \\
      0 & 0 & 1 & 0 \\
      2 & 2 & 2 & -1
    \end{matrix}
    \right),
     &S_{1^{'}23} &= \left(
    \begin{matrix}
      -3 & 4 & 4 & 4 \\
      0 & 1 & 0 & 0 \\
      0 & 0 & 1 & 0 \\
      -2 & 2 & 2 & 3
    \end{matrix}
    \right),\\
    \nonumber
     &S_{12^{'}3} = \left(
    \begin{matrix}
      1 & 0 & 0 & 0 \\
      4 & -3 & 4 & 4 \\
      0 & 0 & 1 & 0 \\
      2 & -2 & 2 & 3
    \end{matrix}
    \right),
     &S_{123^{'}} &= \left(
    \begin{matrix}
      1 & 0 & 0 & 0 \\
      0 & 1 & 0 & 0 \\
      4 & 4 & -3 & 4 \\
      2 & 2 & -2 & 3
    \end{matrix}
    \right),\\
    \nonumber
    &S_{1^{'}2^{'}3} = \left(
    \begin{matrix}
      -3 & -4 & 4 & 12 \\
      -4 & -3 & 4 & 12 \\
      0 & 0 & 1 & 0 \\
      -2 & -2 & 2 & 7
    \end{matrix}
    \right),
    &S_{1^{'}23^{'}} &= \left(
    \begin{matrix}
      -3 & 4 & -4 & 12 \\
      0 & 1 & 0 & 0 \\
      -4 & 4 & -3 & 12 \\
      -2 & 2 & -2 & 7
    \end{matrix}
    \right),\\
    &S_{12^{'}3^{'}} = \left(
    \begin{matrix}
      1 & 0 & 0 & 0 \\
      4 & -3 & -4 & 12 \\
      4 & -4 & -3 & 12 \\
      2 & -2 & -2 & 7
    \end{matrix}
    \right), 
    &S_{1^{'}2^{'}3^{'}} &= \left(
    \begin{matrix}
      -3 & -4 & -4 & 20 \\
      -4 & -3 & -4 & 20 \\
      -4 & -4 & -3 & 20 \\
      -2 & -2 & -2 & 11
    \end{matrix}
    \right).
     \end{align}
 The subtitles of the above notations keep track of the circles forming the triangular gap.  For example,  $S_{1^{'}2^{'}3} $ denotes the reflection via the dual circle of $C_{1^{'}},C_{2^{'}},C_3$.   The group generated by these eight matrices is called Apollonian 3-group, denoted by $\tilde{\mathcal{A}}$:
 \begin{align}\label{a3}\tilde{\mathcal{A}}=\langle S_{123},S_{1^{'}23},S_{12^{'}3},S_{123^{'}},S_{1^{'}2^{'}3},S_{1^{'}2^{'}3},S_{1^{'}23^{'}},S_{1^{'}2^{'}3^{'}}\rangle\end{align}
 
   Then we have 
 \begin{align}\label{KK} \KK=\{\langle\bd{e}_i, \tilde{\mathcal{A}}\cdot\bd{r}\rangle|i=1,2,3\}\cup\{\langle\bd{e}_i, \tilde{\mathcal{A}}\cdot\bd{r}^{'}\rangle|i=1,2,3\}
 \end{align}
 where $\bd{r^{'}}=\langle\kappa_{1^{'}},\kappa_{2^{'}},\kappa_{3^{'}},w\rangle$.  It then follows that if the initial six circles have integral curvatures,  then $\mathcal{P}$ is integral.

 In light of \eqref{K},  we reduce studying $\KK$ to studying the group $\tilde{\mathcal{A}}$ which acts on some quadruples containing full information of $\KK$.  $\Aa$ is a Coxeter group with the only relations $$S_{123}^2=S_{1^{'}23}^2=\hdots=I.$$  It preserves the quadratic 3-1 form $Q$,  so $\tilde{Aa}\subseteq{O_Q(\ZZ)}$.  Furthermore, we pass to its orientation-preserving subgroup ${\Aa}=\Aa\cap SO_Q(\ZZ)$, which is an index-2 subgroup of ${\tilde{\Aa}}$ and a free group generated by 
\begin{align}\label{generator}
S_{123}S_{1^{'}23},S_{123}S_{12^{'}3},S_{123}S_{123^{'}},S_{123}S_{1^{'}2^{'}3},S_{123}S_{1^{'}23^{'}},S_{123}S_{12^{'}3^{'}},S_{123}S_{1^{'}2^{'}3^{'}}.\end{align}\\
From \eqref{KK} we also have 
 \begin{align}\label{K} \KK=\{\langle\bd{e}_i, {\mathcal{A}}\cdot\bd{r}\rangle|i=1,2,3\}\cup\{\langle\bd{e}_i, \mathcal{A}\cdot\bd{r}^{'}\rangle|i=1,2,3\}
 \end{align}
 This is because if a word from $\tilde{\Aa}$ consists of odd number of reflections, we can always pre-add $S_{123}$ (or $S_{1^{'}2^{'}3^{'}}$) without changing $\bd{r}$ (or $\bd{r}^{'}$).  The augmented word is even, thus lies in $\Aa$.\\

Recall the spin homomorphism $\rho_0:SL(2,\CC)\longrightarrow SO_{{Q_0}}$, where $\tilde{Q_0}(x,y,z,t)=t^2-x^2-y^2-z^2$ is the standard $3-1$ form (see \cite{EGJ98}):
\begin{equation}
\rho_0\left(
\left(
\begin{array}{ccc}
  a & b     \\
  c & d     
\end{array}
\right)
\right)=
\left(
\begin{matrix}
 \Re(a\bar{d}+b\bar{c}) &\Im(a\bar{d}-b\bar{c})  &\Re(-a\bar{c}+b\bar{d}) &\Re(a\bar{c}+b\bar{d}) \\
 \Im(-a\bar{d}-b\bar{c})&\Re(a\bar{d}-b\bar{c}) & \Im(a\bar{c}-b\bar{d})& \Im(-a\bar{c}-b\bar{d})\\
 \Re(-a\bar{b}+c\bar{d})& \Im(-a\bar{b}+c\bar{d}) &\frac{|a|^2-|b|^2-|c|^2+|d|^2}{2}&\frac{-|a|^2-|b|^2+|c|^2+|d|^2}{2}\\
 \Re(a\bar{b}+c\bar{d})&\Im(a\bar{b}+c\bar{d})&\frac{-|a|^2+|b|^2-|c|^2+|d|^2}{2}&\frac{|a|^2+|b|^2+|c|^2+|d|^2}{2}
 \end{matrix}
\right)
\end{equation}

The isomorphism between $SO_{{Q_0}}$ and $SO_{Q}$ is given by 
\begin{align*}
{A}\longrightarrow J^{-1}{A}J,
\end{align*}
where 
\begin{align*}
J=\left(\begin{matrix}1&0&0&-1\\0&1&0&-1\\0&0&1&-1\\0&0&0&\sqrt{2}\end{matrix}\right)
\end{align*}

The spin homomorphism that we use is  $\rho$, defined from $SL(2,\CC)$ to $SO_{\tilde{Q}}$ as

\begin{equation}
\rho(\gamma)=J^{-1}\rho_0\left(\left(\begin{matrix}1+i&-\sqrt{2}\\\sqrt{2}&1+i\end{matrix}\right)\gamma\left(\begin{matrix}1+i&-\sqrt{2}\\\sqrt{2}&1+i\end{matrix}\right)^{-1}\right)J
\end{equation}

The good thing about conjugating $\gamma$ with $\left(\begin{matrix}1+i&-\sqrt{2}\\\sqrt{2}&1+i\end{matrix}\right)$ is that the preimage of the generators in \eqref{generator} is 

\begin{align}\nonumber
& M_1=\mat{1&2\\-2&-3},M_2=\mat{1-2\sqrt{2}i&2\\2+4\sqrt{2}i&-3+2\sqrt{2}i},M_3=\mat{1&0\\-4&1},M_4=\mat{-1+2\sqrt{2}i&-4\\-4\sqrt{2}i&7-2\sqrt{2}i}\\
& M_5=\mat{-1&2\\2&-5},M_6=\mat{1+2\sqrt{2}i&-2\\-6-4\sqrt{2}i&5-2\sqrt{2}i},
M_7=\mat{-1-2\sqrt{2}i&4\\4+4\sqrt{2}i&-9+2\sqrt{2}i},
\end{align}
which all lie in $SL(2,\ZZ [\srt i])$, and we let $\Gamma=\langle M_1, M_2, M_3, M_4,M_5,M_6,M_7\rangle$. \\

If we write $a=a_1+a_2\bd{i}, b=b_1+b_2\bd{i}, c=c_1+c_2\bd{i}, d=d_1+d_2\bd{i}$, one can verify (with the aid of computer) that $\rho$ maps the matrix $ \left(\begin{array}{ccc}
  a & b     \\
  c & d     
\end{array}
\right)$ to a $4\times4$ matrix, each entry of which is a homogenous quadratic polynomial of $a_1,a_2,b_1,b_2,c_1,c_2,d_1,d_2$, with half-integer coefficients.  Therefore, $\rho$ can descend to a homomorphism from 
$\Gamma/\Gamma(q)$ to $A/A(q)$ for any $q$ that does not contain a power of 2.  \\

The group $\Gamma$ contains a real subgroup $\Gamma_{C_3}=\langle M_1,M_3,M_5\rangle$.  Geometrically,  $\Gamma_{C_3}$ fixes the circle $C_3$.  It turns out that $\Gamma_{C_3}$ is a congruence subgroup:

 \begin{prop}The group $\Gamma$ is a congruence subgroup of level 4. Explicitly, 
\al{\label{congruencesubgroup4}\Gamma_{C_3}=\left\{\mat{a&b\\c&d}\in SL(2,\ZZ)|a\equiv d\equiv 1(\rm{mod}\hspace{1.5mm}2),b\equiv c\equiv0 \hspace{1mm}\rm{or} \hspace{1mm}2 (\rm{mod}\hspace{1.5mm}4)\right\}} 
 \end{prop}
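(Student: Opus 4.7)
Let $\Gamma_0$ denote the right-hand side of \eqref{congruencesubgroup4}. The plan is to prove $\Gamma_{C_3}=\Gamma_0$ by establishing the two inclusions separately.

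For $\Gamma_{C_3}\subseteq\Gamma_0$, I would first verify that $\Gamma_0$ is a subgroup of $SL(2,\ZZ)$ and that each of $M_1,M_3,M_5$ lies in it. Both points are finite verifications modulo $4$: the image of $\Gamma_0$ in $SL(2,\ZZ/4\ZZ)$ consists of exactly the four matrices $I,-I,\bar M_1,-\bar M_1$, with $\bar M_1\equiv\mat{1&2\\2&1}\pmod 4$, which form a Klein four-subgroup of $SL(2,\ZZ/4\ZZ)$. Since $|SL(2,\ZZ/4\ZZ)|=48$, this gives $[SL(2,\ZZ):\Gamma_0]=12$, a fact that will be used in the reverse inclusion.

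For $\Gamma_0\subseteq\Gamma_{C_3}$, I reduce first to showing $\Gamma(4)\subseteq\Gamma_{C_3}$. Since $M_1$ and $M_5$ reduce modulo $4$ to $\bar M_1$ and $-\bar M_1$ respectively, the composition $\Gamma_{C_3}\hookrightarrow\Gamma_0\twoheadrightarrow\Gamma_0/\Gamma(4)$ is already surjective, so $\Gamma_{C_3}\cdot\Gamma(4)=\Gamma_0$. To prove the remaining inclusion $\Gamma(4)\subseteq\Gamma_{C_3}$, I would exploit the clean identities $M_3^{-1}=\mat{1&0\\4&1}$ and
\[
M_1^{-1}M_5=\mat{-1&4\\0&-1},
\]
which already place the parabolic elements $L^4$ and $-T^{-4}$ inside $\Gamma_{C_3}$ (hence also $T^{-8}$ as the square of the second). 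A Riemann--Hurwitz computation shows $\Gamma(4)$ is free of rank $5$ (its index in $SL(2,\ZZ)$ is $48$, and the virtual Euler characteristic of $SL(2,\ZZ)$ is $-1/12$), so five free generators suffice; I would produce them by combining the elements above with conjugates of $L^4$ by suitable words in $M_1$ and $M_5$.

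The main obstacle is certifying that these conjugates exhaust a full generating set of $\Gamma(4)$ rather than a proper subgroup. The cleanest rigorous route is a Todd--Coxeter coset enumeration verifying $[SL(2,\ZZ):\Gamma_{C_3}]\leq 12$, which together with the already-known $\Gamma_{C_3}\subseteq\Gamma_0$ of index exactly $12$ forces equality $\Gamma_{C_3}=\Gamma_0$. Alternatively, one may construct an explicit fundamental domain for $\Gamma_{C_3}$ in $\mathbb{H}^2$ and match its hyperbolic area against that of $\Gamma_0\backslash\mathbb{H}^2$.
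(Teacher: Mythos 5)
Your Plan B (build an explicit fundamental domain for $\Gamma_{C_3}$ and match its hyperbolic area against that of the right-hand side) is exactly what the paper does: the paper replaces $M_1,M_3,M_5$ by the three parabolics $M_1$, $M_3^{-1}$, $M_3^{-1}M_5$ fixing $-1,0,1$, exhibits the ideal hexagon with vertices $\infty,-1,-\tfrac12,0,\tfrac12,1$ as a fundamental domain via a ping-pong argument, and then compares covolumes. Your Plan A (Todd--Coxeter coset enumeration bounding $[SL_2(\ZZ):\Gamma_{C_3}]$) is a genuine alternative not used in the paper. So at the level of overall strategy you land on the paper's route, but you leave both plans as sketches and explicitly acknowledge the main step is unfinished.

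There is, however, a concrete gap that both your sketch and the paper's covolume argument share, and your own computation is already pointing at it. You observe $M_1^{-1}M_5=\left(\begin{smallmatrix}-1&4\\0&-1\end{smallmatrix}\right)=-T^{-4}$, not $T^{-4}$. That sign is not cosmetic. The ping-pong argument shows that the image $\bar\Gamma_{C_3}\subset PSL_2(\ZZ)$ is free of rank $3$ on the three parabolics; since a free group is torsion-free and a splitting of $F_3\to\bar\Gamma_{C_3}\cong F_3$ lifts back, $\Gamma_{C_3}$ itself is free of rank $3$ and hence does not contain $-I$. But the group $\Gamma_0$ on the right-hand side clearly contains $-I$ (take $a=d=-1$, $b=c=0$). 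In particular $T^{-4}\in\Gamma(4)\subset\Gamma_0$ but $T^{-4}\notin\Gamma_{C_3}$ (only $-T^{-4}$ is), so your proposed reduction ``show $\Gamma(4)\subseteq\Gamma_{C_3}$'' is to a false statement, and Plan A would in fact return index $24$, not $\le 12$. The fundamental-domain/covolume comparison (Plan B, and the paper's proof) likewise only establishes equality of images in $PSL_2(\ZZ)$, i.e.\ $\pm\Gamma_{C_3}=\Gamma_0$; it cannot distinguish $\Gamma_{C_3}$ from $\Gamma_0$ because $-I$ acts trivially on $\mathbb{H}$. So as stated the proposition needs a $\pm$ on one side, and a complete proof must either include that correction or supply a separate argument that $-I\in\Gamma_{C_3}$ (which the freeness argument rules out). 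For the paper's downstream purposes, the statement $\pm\Gamma_{C_3}=\Gamma_0$ is what is actually used, so the fix is harmless, but your proof plan should be adjusted accordingly: replace ``$\Gamma(4)\subseteq\Gamma_{C_3}$'' by ``$\{\pm I\}\Gamma(4)\subseteq\{\pm I\}\Gamma_{C_3}$,'' or work throughout in $PSL_2(\ZZ)$.
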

 
  \begin{proof}
We notice that the $\subseteq$ direction is straightforward, then we can prove the proposition by explicitly constructing the fundamental domain (See Figure \ref{fundamentaldomaincongruence}). Indeed once we show that the fundamental domain of $\Gamma_{C_3}$ is as shown in Figure  \ref{fundamentaldomaincongruence}, we can compute the covolume of $\Gamma_{C_3}$ to be $8\pi$, which coincides with the covolume of the group described by the righthand side of  \eqref{congruencesubgroup4}, thus the proposition is established. \\

First we replace the generators $M_1,M_3,M_5$ of $\Gamma_{C_3}$ by three parabolic generators $M_1,M_3^{-1}=\left(\begin{array}{ccc}
  1 & 0     \\
  4 & 1     
\end{array}
\right),M_3^{-1}M_5=\left(\begin{array}{ccc}
  -1 & 2     \\
  -2 & 3     
\end{array}
\right)$ which fix -1,0,1 respectively. We denote the corresponding parabolic subgroups by $B_{-1},B_{0},B_1$.  We have $M_1(\infty)=-\frac{1}{2},M_3^{-1}(-\frac{1}{2})=\frac{1}{2}$ and $M_3^{-1}M_5(\frac{1}{2})=\infty$. It turns out that the open region $\mathcal{F}_{C_3}$ bounded by the closed loop $\infty\rightarrow -1\rightarrow -\frac{1}{2}\rightarrow0\rightarrow\frac{1}{2}\rightarrow1\rightarrow\infty$ is the fundamental domain for $\Gamma_{C_3}$.  

Associate the open regions $\rm{I},\rm{II},\rm{III}$ (See Figure \ref{fundamentaldomaincongruence}) to $B_{-1},B_0,B_1$, then $B_{-1}$ maps $\rm{II},\rm{III}$ to $\rm{I}$,  $B_0$ maps $\rm{I},\rm{III}$ to $\rm{II}$ and $B_1$ maps $\rm{I},\rm{II}$ to $\rm{III}$.  We can apply the Pingpong Lemma to show that $\Gamma$ is freely generated  by these three elements.  To show that $\mathcal{F}_{C_3}$ is a fundamental domain, one needs to show \\

(i)$\gamma(\mathcal{F}_{C_3})\cap\mathcal{F}_{C_3}=\emptyset$ if $\gamma\neq I$.\\

(ii)$\overline{\Gamma_{C_3}(\mathcal{F}_{C_3})}=\mathbb{H}$.\\

For (i), first write $\gamma=T_1T_2\cdots T_m$, where each $T_i$ comes from one of the parabolic subgroups $B_{-1},B_{0} \text{ or } B_1$.  We say the \emph{length} of this word is $m$. We assume the length of the word is minimal so that $T_i,T_{i+1}$ are not in a same parabolic subgroup.  Then one can prove that $\gamma(\mathcal{F}_{C_3})$ lies in one of the regions from $\rm{I},\rm{II},\rm{III}$, which is determined by $T_1$.  Since $\rm{I},\rm{II},\rm{III}$ are disjoint from $\mathcal{F}_{C_3}$, (i) is thus proved.\\

For (ii), suppose $z\in\overline{\Gamma_{C_3}(\mathcal{F}_{C_3})}=\mathbb{H}$, we want to show that $z$ lies also in the interior of $\overline{\Gamma(\mathcal{F}_{C_3})}$.  First one can check that for each side of $\mathcal{F}_{C_3}$, there's one element $\gamma$ from $M_1,M_1^{-1},M_3,M_3^{-1},M_3M_5^{-1},M_3^{-1}M_5$ such that $\gamma(\mathcal{F}_{C_3})$ and $\mathcal{F}_{C_3}$ share this given side.  Now we place a ball of radius $\epsilon$ sitting at each of the cusps $-1,0,1$ and we say the complement of these balls to $\mathcal{F}_{C_3}$ the \emph{compact part} of $\mathcal{F}_{C_3}$, denoted by $\mathcal{F}_{C_3}^{c,\epsilon}$.  We define the compact part of $\gamma(\mathcal{F}_{C_3})$ simply by $\gamma(\mathcal{F}_{C_3}^{c,\epsilon})$.  Then there exists a universal constant $l(\epsilon)$ such that if $z$ lies within the $l(\epsilon)$ distance of some $\gamma(\mathcal{F}_{C_3}^{c,\epsilon})$, then $z$ lies either within some $\gamma^{'}(\mathcal{F}_{C_3})$ next to $\gamma(\mathcal{F}_{C_3})$, or on the common boundary of these two domains.  In both cases $z$ is an inner point of $\overline{\Gamma_{C_3}(\mathcal{F}_{C_3})}$.

It's an elementary geometric exercise to check that any $\gamma\in\Gamma_{C_3}$ will send these $\epsilon$-balls to balls with radii no greater than $\epsilon$ (by induction on the minimal length of word).  This means that if we choose $\epsilon=\frac{\text{Im} z}{10}$ and some $l(\epsilon)<\frac{\text{Im} z}{10}$, and some $\gamma_{\epsilon}$ such that $d(\gamma_{\epsilon}(\mathcal{F}_{C_3})),z)<\min\{\frac{\text{Im} z}{10},l(\epsilon)\}$, then $d(\gamma_{\epsilon}(\mathcal{F}_{C_3}^{c,\epsilon})),z)<\frac{\text{Im} z}{10}$.  In other words, $z$ is very close to the compact part of the fundamental domain $\gamma_{\epsilon}(\mathcal{F}_{C_3})$.  Therefore $z$ is an inner point.  Since $\overline{\Gamma(\mathcal{F}_{C_3})}$ is both open and closed, $\overline{\Gamma(\mathcal{F}_{C_3})}=\mathbb{H}$.
\end{proof}

 \begin{figure}[htbp] 
    \centering
    \includegraphics[width=3.5in]{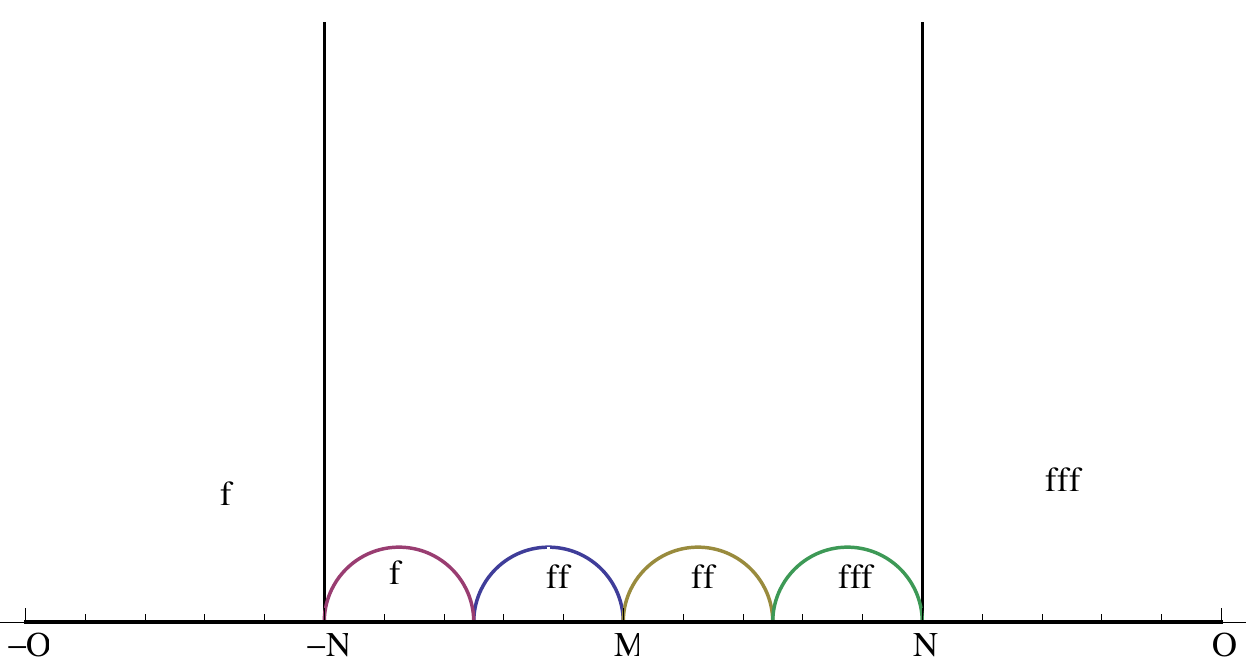} 
   \caption{The fundamental domain for $\Gamma_{C_3}$}
    \label{fundamentaldomaincongruence}
 \end{figure}

Conjugating $\Gamma_{C_3}$ by $\mat{1&0\\\sqrt{2}i&1}$, one gets $\Gamma_{C_1}=\mat{1&0\\\sqrt{2}i&1}\Gamma_{C_3}\mat{1&0\\-\sqrt{2}i&1}=\langle M_2,M_3,M_6\rangle$, which is a subgroup of $\Gamma$ fixing $C_{1}$. Similarly, $$\Gamma_{C_{3^{'}}}=\mat{-1&1+\sqrt{2}i\\-1&-1+\sqrt{2}i}\Gamma_{C_{3}}\mat{-1&1+\sqrt{2}i\\-1&-1+\sqrt{2}i}^{-1}=<M_7^{-1}M_3,M_7^{-1}M_5,M_7^{-1}M_6>,$$
 which is a subgroup fixing $C_{3^{'}}$.

 Let  \al{\label{akq}A_{k}(q)=\{g_1h_1j_1\hdots g_kh_kj_k:g_1,\hdots,g_k\in \Gamma_{C_3},h_1,\hdots,h_k\in\Gamma_{C_1},j_1,\hdots,j_k\in\Gamma_{C_{3^{'}}}\}} We have the following proposition: 
 
 \begin{prop}\label{gammaproduct} Let $q=\prod_{i}p_{i}^{n_{i}}$, then $\Gamma/\Gamma(q)=A_{10^{9}}(q).$
 \end{prop}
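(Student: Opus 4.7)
The plan is to first reduce to prime powers by a Chinese Remainder argument, and then establish a uniform bounded-generation result at each prime power. For the CRT step, I would show that the reduction map $\Gamma/\Gamma(q) \to \prod_i \Gamma/\Gamma(p_i^{n_i})$ is an isomorphism; this is a strong-approximation statement which can be verified directly from the explicit generators $M_1,\dots,M_7$. Suppose one knows a uniform bound $\Gamma/\Gamma(p^n) = A_k(p^n)$ for some absolute constant $k$ and every prime power $p^n$. Then, given a target coset mod $q$, for each $i$ one picks $k$ triples $(g_s^{(i)}, h_s^{(i)}, j_s^{(i)}) \in \Gamma_{C_3}\times\Gamma_{C_1}\times\Gamma_{C_{3^{'}}}$ whose product realizes the needed reduction mod $p_i^{n_i}$. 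The three subgroups $\Gamma_{C_3},\Gamma_{C_1},\Gamma_{C_{3^{'}}}$ are all congruence subgroups ($\Gamma_{C_3}$ by the proposition just proved, the other two as $SL(2,\CC)$-conjugates of it), so strong approximation holds inside each separately: for each fixed $s$ there is a single $g_s \in \Gamma_{C_3}$ with $g_s \equiv g_s^{(i)} \pmod{p_i^{n_i}}$ for all $i$, and similarly for $h_s$ and $j_s$. The resulting length-$k$ triple product lies in $\Gamma$ and realizes the chosen coset mod $q$, giving $\Gamma/\Gamma(q) = A_k(q)$ with the same $k$.

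It remains to exhibit such a uniform $k$. For odd primes $p$, the spin map $\rho$ descends to a homomorphism $\Gamma/\Gamma(p^n) \to \Aa/\Aa(p^n)$, as noted in the excerpt, so one can work in an explicit matrix model over $\ZZ[\sqrt{2}i]/p^n$; a Hensel-style lift then reduces bounded generation mod $p^n$ to bounded generation mod $p$. Modulo $p$, the three subgroups reduce to stabilizers of three distinct points on the projective line of the residue ring, and a Bruhat-type decomposition (or equivalently Gauss-elimination style row and column operations) shows that three such ``Borel-like'' subgroups generate $SL_2$ of a finite ring in absolutely bounded word length, independent of $p$ and $n$.

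The main obstacle will be the prime $p=2$. There the spin homomorphism does not descend mod $2^n$, so the matrix model over $\ZZ[\sqrt{2}i]/p^n$ is no longer cleanly available, and each of the three subgroups carries a level-$4$ congruence condition which must be accounted for. I would attack this by hand inside $SL(2,\ZZ[\sqrt{2}i]/2^n)$: pick an explicit set of coset representatives for $\Gamma/\Gamma(2^n)$ and verify, using the generators $M_1,\dots,M_7$ and the three subgroups directly, that each representative is a bounded-length triple product. The exponent $10^9$ in the statement is clearly a very generous safety margin, designed to absorb the worst-case behavior from these low-prime computations; the true word length is expected to be far smaller, but is fixed absolutely once all the constants above are collected.
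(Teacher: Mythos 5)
Your architecture --- reduce to prime powers, establish a uniform bounded-generation statement $\Gamma/\Gamma(p^n)=A_k(p^n)$, then glue via CRT using the congruence property of $\Gamma_{C_3}$ (and of its conjugates $\Gamma_{C_1},\Gamma_{C_{3'}}$) --- matches the paper precisely, and your CRT gluing step (simultaneously approximating the $g_s^{(i)}, h_s^{(i)}, j_s^{(i)}$ across the prime powers within each of the three subgroups separately) is exactly how the paper's proof of the proposition finishes. But the mechanism you propose at odd primes contains a genuine error. The three subgroups do \emph{not} reduce mod $p$ to stabilizers of three points on $\mathbb{P}^1$ of the residue ring; they are nothing like Borel subgroups. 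Each of $\Gamma_{C_3},\Gamma_{C_1},\Gamma_{C_{3'}}$ is an $SL(2,\CC)$-conjugate of the level-$4$ congruence subgroup of $SL(2,\ZZ)$, so for odd $p$ each surjects onto a full (conjugated) copy of $SL(2,\ZZ/p^m\ZZ)$ sitting inside $SL(2,\ZZ[\sqrt{2}i]/p^m)$. These are nonsolvable, and a Bruhat/Gauss-elimination picture simply does not apply; the argument as you wrote it would fail at that step.

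What the paper actually does for $p\geq 5$ (Lemma \ref{pcase}) is: use that $\Gamma_{C_1}\pmod{p^m}$ is a full $SL(2,\ZZ/p^m\ZZ)$; build a specific short word in $M_2$ conjugated by diagonals from that copy to produce lower unipotents of the shape $\mat{1&0\\3\sqrt{2}b^2 i&1}$; invoke Lagrange's four-square theorem together with Hensel lifting so that the lower-left entry ranges over all residues $a\sqrt{2}i$; conjugate by a Weyl element to get the upper unipotents; and finish with a crude counting argument showing that the product set with $c'$ invertible is already more than half of $SL(2,\ZZ[\sqrt{2}i]/(p^m))$. The factor $3$ in $3\sqrt{2}b^2 i$ is precisely why $p=3$ fails to be covered by this argument --- a point your proposal misses. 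In the paper $p=3$ is handled alongside $p=2$ in Lemma \ref{p23} by an explicit inductive lifting with hand-chosen trace-zero matrices; so your instinct to treat small primes by direct computation is correct, but $3$ must go in that bin along with $2$.
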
 
Before proving Proposition \ref{gammaproduct}, we prove a few lemmas first.

\begin{lem}\label{pcase}
If $p\geq5$, then $A_{54}(p^m)=\Gamma/\Gamma(p^m)$.
\end{lem}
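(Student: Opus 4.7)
The plan is to reduce Lemma~\ref{pcase} to a bounded product decomposition inside $\Gamma/\Gamma(p^m)$, first handling the base case $m=1$ by Bruhat decomposition and then lifting to $m > 1$ via a $p$-adic Lie-theoretic argument. Since $p \geq 5$ is coprime to $2$, the spin homomorphism discussed above gives a well-defined reduction of $\Gamma$ modulo $p^m$, and strong approximation of Matthews--Vaserstein--Weisfeiler type identifies $\Gamma/\Gamma(p^m)$ with $SL(2, \mathcal{O}/p^m)$, where $\mathcal{O} := \mathbb{Z}[\sqrt{2}i]$. The three real subgroups $\Gamma_{C_3}, \Gamma_{C_1}, \Gamma_{C_{3^{'}}}$ are $SL(2, \mathcal{O})$-conjugates of the level-$4$ congruence subgroup of $SL(2,\mathbb{Z})$ exhibited in the preceding proposition, and since $(p, 4) = 1$, each one surjects mod $p^m$ onto a conjugate copy of $SL(2, \mathbb{Z}/p^m)$ sitting inside $SL(2, \mathcal{O}/p^m)$.

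For $m = 1$, the ring $\mathcal{O}/p$ is either $\mathbb{F}_p \times \mathbb{F}_p$ (when $-2$ is a quadratic residue mod $p$) or $\mathbb{F}_{p^2}$ (the inert case). In both situations I would invoke the Bruhat decomposition $g = u\, d\, w\, u'$ (with $u, u'$ unipotent, $d$ in the torus, $w$ a Weyl element), and show that each piece is achievable by a bounded number of triples of elements drawn from the three real subgroups. The idea is that the upper-triangular entry $a + b\sqrt{2}i$ of a unipotent in $SL(2, \mathcal{O}/p)$ splits into a \emph{real} piece $a$, produced by $\Gamma_{C_3}$, and an \emph{imaginary} piece $b\sqrt{2}i$, produced by a short conjugation involving the generators of $\Gamma_{C_1}$ whose off-diagonal entries lie in $\sqrt{2}i \cdot \mathbb{Z}$; the torus and Weyl elements are handled analogously using commutators and the third subgroup $\Gamma_{C_{3^{'}}}$ to resolve any residual ambiguity. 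Careful accounting should yield some constant $k_0$ of triples sufficient at level~$p$.

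To lift to $m > 1$, I would use that $\Gamma(p^j)/\Gamma(p^{j+1}) \cong \mathfrak{sl}_2(\mathcal{O}/p)$ as $\mathbb{Z}/p$-modules via $I + p^j X \mapsto X$, and that the image of $\Gamma_{C_i} \cap \Gamma(p^j)$ in this quotient is a $\mathbb{Z}/p$-Lie subalgebra conjugate to $\mathfrak{sl}_2(\mathbb{Z}/p)$. Since the three fixed circles $C_3, C_1, C_{3^{'}}$ are in sufficiently general position, the three sub-Lie-algebras together span all of $\mathfrak{sl}_2(\mathcal{O}/p)$ as a $\mathbb{Z}/p$-vector space; hence I can clear any kernel element layer-by-layer at a cost of $O(1)$ triples per layer. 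The Baker--Campbell--Hausdorff series converges $p$-adically for $p \geq 5$, so the per-layer corrections telescope into a fixed number of additional triples independent of $m$.

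The main obstacle is verifying the specific constant $54$ rather than merely the existence of a uniform bound. This requires explicit counting of how many triples each Bruhat piece and each Hensel-lift step consumes, together with a check that no $p \geq 5$ is exceptional for the general-position hypothesis on the three real sub-Lie-algebras. I expect the actual bound to be comfortably below $54$; the slack is presumably there for convenient use in the subsequent global proposition bounding the word length needed to cover $\Gamma/\Gamma(q)$ for composite $q$.
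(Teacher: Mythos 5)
Your proposal goes by a genuinely different route from the paper, and the main step of that route has a real gap. The paper does not separate $m=1$ from $m>1$ at all: its engine is an explicit matrix identity involving $M_2$ and elements of $\Gamma_{C_3}$ that produces $\left(\begin{smallmatrix}1&0\\3\sqrt{2}b^2 i&1\end{smallmatrix}\right)$ for any $b$. Combined with Lagrange's four-square theorem and a single application of Hensel's lemma to the equation $x^2+y^2+z^2+w^2 \equiv M \pmod{p^m}$ (lifting from $m=1$ in one shot, not layer by layer), this yields every purely imaginary lower unipotent $\left(\begin{smallmatrix}1&0\\a\sqrt{2}i&1\end{smallmatrix}\right) \pmod {p^m}$ inside $A_9(p^m)$. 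Multiplying by real unipotents from $\Gamma_{C_3}$ (which, being a level-4 congruence subgroup, surjects onto $SL_2(\mathbb{Z}/p^m)$ when $p\geq 5$) and conjugating by the Weyl element gives all upper and lower unipotents. An $L\,U\,L$ product then captures every matrix with invertible $c$-entry, which constitutes more than half of $SL_2(\mathcal{O}/p^m)$, so a pigeonhole-doubling argument finishes. The uniformity in $m$ is entirely carried by the Hensel step on the four-square equation, and the word-length bookkeeping along this chain produces $54$.

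The gap in your version is the lifting step. Clearing the kernel of $\Gamma/\Gamma(p^m)\to\Gamma/\Gamma(p)$ ``layer by layer at a cost of $O(1)$ triples per layer'' gives an $O(m)$ bound, not an $O(1)$ bound, and invoking $p$-adic convergence of Baker--Campbell--Hausdorff does not repair this: BCH convergence governs the behaviour of the exponential map, not the number of factors needed. What would actually make a bounded-step lifting work is an implicit-function-theorem (Hensel) argument on the word map itself, showing that if the product map $\Gamma_{C_3}^{k}\times\Gamma_{C_1}^{k}\times\Gamma_{C_{3'}}^{k}\to SL_2(\mathcal{O}/p)$ is ``submersive'' mod $p$ then the same $k$ works mod $p^m$ for all $m$ -- precisely the quadratic-convergence phenomenon that the paper captures far more concretely by applying Hensel to a single polynomial equation. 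You would need to supply that argument, together with the explicit constant that the lemma asserts; as written, your sketch establishes neither the uniformity in $m$ nor the bound $54$. Your Bruhat-decomposition plan for $m=1$ is plausible as a base case, and what your approach would buy (if completed) is a more structural, representation-theoretic proof that avoids the special identity with $M_2$, at the cost of a substantially more delicate lifting argument.
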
 
\begin{proof}
Since $\Gamma_{C_1}$ is a congruence subgroup of level 4, we have $\Gamma_{C_1}/\Gamma_{C_1}(p^m)=SL(2,\ZZ/p^m\ZZ). $ We also have 
$$\mat{b^{-1}&0\\0&b}\cdot\mat{-\frac{1}{2}&0\\-\frac{7}{4}&-2}\cdot M_2^2\cdot\mat{1&0\\-\frac{1}{4}&1}\cdot M_2^{-1}\cdot\mat{b&0\\0&b^{-1}}=\mat{1&0\\3\sqrt{2}b^2i&1}.$$
Now we show that $\forall M>1$, we can find at most four elements $a,b,c,d\in\ZZ/p^m\ZZ)$ such that
$$a^2+b^2+c^2+d^2\equiv M(\text{mod }p^m)$$
This is true for $m=1$ by the Lagrange's Four Square Theorem, which states that every integer can be written as a sum of at most four squares of integers.  Choose $M^{'}\equiv M(p)$ with $0< M^{'}\leq p$, then we can choose $a^{'},b^{'},c^{'},d^{'}$ such that 
\al {\label{lagrange} {{a^{'}}^2+{b^{'}}^2+{c^{'}}^2+{d^{'}}^2=M^{'}},}
Necessarily all ${a^{'}},{b^{'}},{c^{'}},{d^{'}}$ have to be strictly less than $p$, and at least one of them is not zero, thus invertible in $\ZZ/p\ZZ$. So when mod $p$, $({a^{'}},{b^{'}},{c^{'}},{d^{'}})$ is a regular point on the curve \al{\label{pregular}x^2+y^2+z^2+w^2\equiv M^{'}(\text{mod }p).}   The general case follows from Hensel's lemma by lifting the solution $({a^{'}},{b^{'}},{c^{'}},{d^{'}})$ of \eqref{pregular}
to a solution $(a,b,c,d)$ of $$x^2+y^2+z^2+w^2=M(\text{mod }p^m)$$
This shows that 
$$\mat{1&0\\a\sqrt{2}i&1}\in A_{9}(p^m)$$
Multiplying the above matrix by $\mat{1&0\\b&1},b\in\ZZ/(p^m)$,  which can be found in $\Gamma_{C_1}$ since it contains $\mat{1&0\\4&1}$, we have
 $$\mat{1&0\\c&1}\in A_{9}(p^m)$$ for any $c\in\ZZ[\sqrt{2}i]/(p^m)$.  Conjugating the above element by 
$\mat{0&-1\\1&0}$, which is also congruent to some element in $\Gamma_{C_3}(\text{mod }p^m)$, we have
$$\mat{1&c\\0&1}\in A_{12}(p^m)$$
for any $c\in\ZZ[\sqrt{2}i]/(p^m)$. Now 
$$\mat{1&a\\0&1}\cdot\mat{1&0\\b&1}\cdot\mat{1&c\\0&1}=\mat{1+ab&a+c+abc\\b&1+bc}.$$
This shows that 
$$\mat{a^{'}&b^{'}\\c^{'}&d^{'}}\in\Gamma/\Gamma(p^m)$$ 
for any $c^{'}$ invertible in $\ZZ[\sqrt{2}i]/(p^m)$ and $a^{'}d^{'}-b^{'}c^{'}=1$.  There are $p^{3m-1}(p-1)$ such elements.  The size of $SL(2,\ZZ[\sqrt{2}i]/(p^m))$ is $p^{3m-3}(p-1)(p^2-1)$, which is strictly less than twice of $p^{3m-1}(p-1)$, this means that $A_{54}(p^m)$ has to be full of the group $SL(2,\ZZ[\sqrt{2}i]/(p^m))$.
\end{proof}

\begin{lem}\label{p23}
$A_{10^7}(2^m)=\Gamma/\Gamma(2^m)$, and $A_{10^7}(3^m)=\Gamma/\Gamma(3^m)$
\end{lem}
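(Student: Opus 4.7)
The plan is to adapt the strategy of the proof of Lemma \ref{pcase}, treating the primes $p=2$ and $p=3$ separately. For $p=3$, the argument will closely follow the case $p\geq 5$ but must be checked at each step where $p\geq 5$ was used; for $p=2$, the approach is genuinely different because $\Gamma_{C_1}$ has level $4$ and consequently does not surject onto $SL(2,\ZZ/2^m\ZZ)$, and because the spin map $\rho$ does not descend mod powers of $2$.

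For $p=3$, I would first check that $\Gamma_{C_1}$ still surjects onto $SL(2,\ZZ/3^m\ZZ)$, which holds by strong approximation since $\gcd(4,3^m)=1$.  Then I would reproduce the key identity
\aln{\mat{b^{-1}&0\\0&b}\cdot\mat{-\tfrac12&0\\-\tfrac74&-2}\cdot M_2^2\cdot\mat{1&0\\-\tfrac14&1}\cdot M_2^{-1}\cdot\mat{b&0\\0&b^{-1}}=\mat{1&0\\3\srt b^2 i&1}}
used in Lemma \ref{pcase}.  The one point requiring genuine care is the Lagrange--Hensel step: when $M\equiv 0\pmod 3$, the four-square representation $a'^2+b'^2+c'^2+d'^2=M'$ with $M'\in\{1,2,3\}$ always has at least one entry coprime to $3$ (since squares mod $3$ lie in $\{0,1\}$, a sum of four of them equal to $0\pmod 3$ must be $1+1+1+0$, up to reordering, or all zero; the latter is impossible for $M'\leq 3$), so the Hensel lift applies.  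The counting at the end of Lemma \ref{pcase} then goes through verbatim, yielding the claim with the much smaller constant $54$, and in particular $A_{10^7}(3^m)=\Gamma/\Gamma(3^m)$.

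For $p=2$, I would work directly inside $SL(2,\ZZ[\srt i]/2^m)$, where $2=-(\srt i)^2$ ramifies, so the target group has a finer filtration by congruence.  The plan is to verify by explicit (computer-aided) calculation that $A_{k_0}(2^{m_0})=\Gamma/\Gamma(2^{m_0})$ for some small $m_0$ and a moderate $k_0$, and then induct on $m$.  Assuming surjectivity mod $2^{m-1}$, I would lift a target element $g\in\Gamma/\Gamma(2^m)$ to a representative $\tilde g\in A_{k_0}(2^{m-1})$ and correct by an element of $\Gamma(2^{m-1})/\Gamma(2^m)$, which is an elementary abelian $2$-group of bounded rank.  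The correction can be realized by commutators and short products drawn from $\Gamma_{C_3},\Gamma_{C_1},\Gamma_{C_{3^{'}}}$; since the rank of $\Gamma(2^{m-1})/\Gamma(2^m)$ is bounded independently of $m$, only a bounded number of extra factors is needed per inductive step, yielding a uniform bound well within $10^7$.

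The main obstacle will be the $p=2$ case, specifically the task of realizing every nontrivial class in $\Gamma(2^{m-1})/\Gamma(2^m)$ by a bounded-length product of elements of the three factor subgroups.  This amounts to exhibiting a fixed collection of commutator and conjugation relations among $M_1,\ldots,M_7$ that together span the abelian quotient uniformly in $m$.  The base cases $m\leq m_0$ must be handled by a finite computer verification (which simultaneously produces the explicit commutator generators), and then the inductive step reuses these same generators at the deeper congruence layer.  The role of the concrete level-$4$ description in Proposition~\ref{congruencesubgroup4} is critical here, since it gives an explicit handle on the reduction of $\Gamma_{C_3},\Gamma_{C_1},\Gamma_{C_{3'}}$ mod $2^m$.
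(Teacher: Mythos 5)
There are two genuine gaps, one for each prime.

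For $p=3$, you cannot simply re-run the argument of Lemma \ref{pcase}. The surjectivity of $\Gamma_{C_1}$ mod $3^m$ and the Lagrange--Hensel step are indeed fine, as you note, but the explicit identity you propose to reproduce outputs the matrix $\mat{1&0\\3\sqrt{2}b^2 i&1}$, whose lower-left entry carries a factor of $3$. Multiplying four such elements, as in Lemma \ref{pcase}, only ever yields $\mat{1&0\\3(b_1^2+b_2^2+b_3^2+b_4^2)\sqrt{2} i&1}$, so every lower-left entry produced is $\equiv 0\pmod 3$ and one never leaves $\Gamma(3)$. For $p\geq 5$ this is harmless because $3$ is a unit mod $p^m$; for $p=3$ it is precisely the obstruction, and no amount of care in the Lagrange--Hensel step repairs it. This is why the paper handles $p=3$ by the same inductive lifting scheme it uses for $p=2$, exhibiting a fresh list of corrector matrices mod $3^m$, rather than by re-running the $p\geq5$ argument.

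For $p=2$, the overall shape (finite base case plus induction on $m$) matches the paper, but your inductive step as written does not produce a \emph{uniform} bound on the number of factors. You propose to lift a representative of $g$ from $A_{k_0}(2^{m-1})$ and then append a bounded-length correction realizing a class of $\Gamma(2^{m-1})/\Gamma(2^m)$. Even if each such correction needs only boundedly many new factors, those factors accumulate over the $m$ inductive steps, so the total word length grows linearly in $m$; the claim that this is ``well within $10^7$'' uniformly is a non sequitur. The paper's device is different at exactly this point: it fixes once and for all a three-factor template $g_1 M_2 g_2 M_2^{-1} M_2^2 g_3 M_2^{-2}$ with $g_i\in\Gamma_{C_3}(2^3)$, and at each inductive step it \emph{perturbs the existing factors} $g_i\mapsto g_i+x_i$ (with $x_i\equiv 0\pmod{2^{m-3}}$ and traceless mod $2^m$, so that $g_i+x_i$ is still congruent to an element of $\Gamma_{C_3}$) so as to absorb the new $2^{m-1}$-level error without appending any factor. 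The word length is thereby constant in $m$. Your proposal is missing this absorption mechanism, which is the heart of the proof.
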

\begin{proof}
We prove the case when $p=2$ and explain the difference when $p=3$.
For $p=2$, we first prove the following claim by induction:\\

$\bd{Claim}$: For every $m\geq6$ and $g\in\Gamma(2^6)/\Gamma(2^m)$, we can find $g_1,g_2,g_3\in\Gamma_{C_3}(2^3)/\Gamma_{C_3}(2^m)$ such that 
$$g=g_1M_2g_2M_2^{-1}M_2^2g_3M_2^{-2}.$$

For $m=6$ we can choose $g_1=g_2=g_3=1$.  For $m>6$, we now assume this holds for $m-1$.  By the induction hypothesis, there exists $h_1,h_2,h_3\in\Gamma(2^3)$ such that
$$g=h_1M_2h_2M_2^{-1}M_2^2h_3M_2^{-2}+2^{m-1}x(\text{mod }2^m)$$
Now we choose some $x_i\in\text{Mat}(2,\ZZ)$ such that $x_i\equiv 0(2^{m-3})$ and $\text{tr}(x_i)\equiv 0(\text{mod }2^m)$ for $i=1,2,3$.  We have
\aln{g&\equiv(h_1+x_1)M_2(h_2+x_2)M_2^{-1}M_2^2(h_2+x_3)m_2^{-2}\\&-(x_1+M_2x_2M_2^{-1}+M_2^2x_2M_2^{-1})+2^{m-1}x(\text{mod }2^m)}
Since $\text{Det}(x_i+h_i)=1(2^m)$ and $x_i+h_i\equiv I (2^{3})$, $x_i+h_i$ is congruent to some element $g_i\in \Gamma_{C_3} (\text{mod } 2^m)$ using the congruence property of $\Gamma_{C_3}$.  The matrices $x_1,x_2,x_3$ can be chosen as a suitable linear combination of the matrices in the following calculations to cancel the term $2^{m-1}x$:
\al{
\nonumber& 2^{m-1}\mat{0&1\\0&0}+M_20M_2^{-1}+m_2^20M_2^{-2}\equiv 2^{m-1}\mat{0&1\\0&0}(\text{mod }2^m)
\\
\nonumber&2^{m-1}\mat{0&0\\1&0}+M_20M_2^{-1}+m_2^20M_2^{-2}\equiv 2^{m-1}\mat{0&0\\1&0}(\text{mod }2^m)
\\
\nonumber&2^{m-1}\mat{1&0\\0&-1}+M_20M_2^{-1}+m_2^20M_2^{-2}\equiv 2^{m-1}\mat{1&0\\0&-1}(\text{mod }2^m)
\\
\nonumber&2^{m-3}\mat{2&-1\\4&-2}+M_22^{m-3}\mat{0&0\\1&0}M_2^{-1}+M_2^20M_2^{-2}\equiv 2^{m-1}\mat{0&\sqrt{2}i\\0&0}(\text{mod }2^m)\\
\nonumber&2^{m-3}\mat{-2&4\\-1&2}+M_22^{m-3}\mat{0&0\\1&0}M_2^{-1}+M_2^20M_2^{-2}\equiv 2^{m-1}\mat{\sqrt{2}i&0\\\sqrt{2}i&-\sqrt{2}i}(\text{mod }2^m)\\
\nonumber&2^{m-3}\mat{4&0\\1&4}+M_20M_2^{-1}+M_2^22^{m-3}\mat{0&0\\1&0}M_2^{-2}\equiv 2^{m-1}\mat{0&0\\1&0}(\text{mod }2^m)
}

Thus we showed that 
$$A_3(2^m)\supseteq \Gamma(2^6)/\Gamma(2^m).$$
Now since the index of $\Gamma(2^6)/\Gamma(2^m)$ in $\Gamma/\Gamma(2^m)$ is $|\Gamma/\Gamma(2^6)|=2^{26}$, this implies that 
\al{A_{10^7}(2^m)=\Gamma/\Gamma(2^m)}
For the case $p=3$, the proof goes in the same way.  We choose the linear combinations of the following:
\aln
{\nonumber
&3^{m-1}\mat{0&1\\0&0}+M_20M_2^{-1}+M_2^20M_2^{-2}\equiv 3^{m-1}\mat{0&1\\0&0}(\text{mod }3^m)\\
&3^{m-1}\mat{0&0\\1&0}+M_20M_2^{-1}+M_2^20M_2^{-2}\equiv 3^{m-1}\mat{0&0\\1&0}(\text{mod }3^m)\\
&3^{m-1}\mat{1&0\\0&-1}+M_20M_2^{-1}+M_2^20M_2^{-2}\equiv 3^{m-1}\mat{1&0\\0&-1}(\text{mod }3^m)\\
&3^{m-1}\mat{0&1\\-1&0}+M_23^{m-1}\mat{0&1\\0&0}M_2^{-1}+M_2^20M_2^{-2}\equiv 3^{m-1}\mat{0&-\sqrt{2}i\\-\sqrt{2}i&0}(\text{mod }3^m)\\
&3^{m-1}\mat{0&1\\0&0}+M_23^{m-1}\mat{0&0\\1&0}M_2^{-1}+M_2^20M_2^{-2}\equiv 3^{m-1}\mat{\sqrt{2}i&0\\0&-\sqrt{2}i}(\text{mod }3^m)\\
&3^{m-1}\mat{-1&1\\1&1}+M_20M_2^{-1}+M_2^23^{m-1}\mat{0&0\\1&0}M_2^{-2}\equiv 3^{m-1}\mat{\sqrt{2}i&\\-\sqrt{2}i&-\sqrt{2}i}(\text{mod }3^m)\\
}
The constant $10^7$ also works in this case.
\end{proof}

Now we are able to prove Proposition \ref{gammaproduct}.
\begin{proof}[Proof of Proposition \ref{gammaproduct}]
First we embed $\Gamma/\Gamma(d)$ into $\prod_{p_i^{m_i}||d}\Gamma/\Gamma(p^m)$.  For any $x\in\prod_{p_i^{m_i}||}\Gamma/\Gamma(p^m)$, from Lemma \ref{pcase} and Lemma \ref{p23}, we can write
$$x\equiv\prod_{j=1}^{10^7}\gamma_{j,C_3}^{(i)}.\gamma_{j,C_1}^{(i)}.\gamma_{j,C_{3^{'}}}^{(i)}(p_i^{m_i})$$
for each $i$, where $\gamma_{j,C_3}^{(i)}\in\Gamma_{C_3},\gamma_{j,C_1}^{(i)}\in\Gamma_{C_1},\gamma_{j,C_{3^{'}}}^{(i)}\in\Gamma_{C_{3^{'}}}$.  Since $\Gamma_{C_3}$ is a congruence subgroup and $\Gamma_{C_1},\Gamma_{C_{3^{'}}}$ are conjugate to $\Gamma_{C_1}$, we can find $\gamma_1,\gamma_2,\gamma_3$ such that 
\aln
{&\gamma_1\equiv\gamma_{j,C_3}^{i}(\text{mod }p_i^{m_i})\\
&\gamma_2\equiv\gamma_{j,C_1}^{i}(\text{mod }p_i^{m_i})\\
&\gamma_3\equiv\gamma_{j,C_{3^{'}}}^{i}(\text{mod }p_i^{m_i})
}
for each $i$.  So $x=\gamma_1\gamma_2\gamma_3\in\Gamma/\Gamma(d)$.  So we have $$\prod_{p^m||d}\Gamma/\Gamma(p^m)=\Gamma/\Gamma(d)$$
\end{proof}

From the above proposition,  it follows directly that

\begin{lem}\text{ }\\
(1) If q=$\prod_{i}p_i^{m_i}$, then $\Gamma/\Gamma(q) \cong\prod_{i}\Gamma/\Gamma(p_i^{m_i})$,\\
(2) If $(q,6)=1$, then $\Gamma/\Gamma(q)=SL(2,(\ZZ[\sqrt{2}i]/(q)))$.\\
(3) If $l\geq3$, then the kernel of $\Gamma/\Gamma(2^l)\longrightarrow \Gamma/\Gamma(8)$ is the full of the kernel of $SL(2,\ZZ[\sqrt{2}i]/(2^l))\longrightarrow SL(2,\ZZ[\sqrt{2}i]/(8))$; If $l\geq 1$, then the kernel of $\Gamma/\Gamma(3^l)\longrightarrow\Gamma/\Gamma(3)$ is the full of the kernel of $SL(2,\ZZ[\sqrt{2}i]/(3^l))\longrightarrow SL(2,\ZZ[\sqrt{2}i]/(3))$.
\end{lem}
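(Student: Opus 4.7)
The plan is to deduce all three parts from Proposition~\ref{gammaproduct} and the explicit constructions inside the proofs of Lemmas~\ref{pcase} and~\ref{p23}. For (1), the diagonal map $\Gamma/\Gamma(q)\to\prod_i\Gamma/\Gamma(p_i^{n_i})$ is injective because $\Gamma(q)=\bigcap_i\Gamma(p_i^{n_i})$, and surjectivity is exactly the content of the final paragraph of the proof of Proposition~\ref{gammaproduct}: an arbitrary tuple in the product is assembled into a single element of $\Gamma$ by first writing each coordinate as $\gamma_{1,i}\gamma_{2,i}\gamma_{3,i}$ with factors from $\Gamma_{C_3}$, $\Gamma_{C_1}$, $\Gamma_{C_{3'}}$, and then gluing across primes using the congruence subgroup property of each factor.

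For (2), I would combine (1) with Lemma~\ref{pcase}. Since $\ZZ[\sqrt{2}i]=\ZZ[\sqrt{-2}]$ has class number one and every rational prime $p\geq 5$ is coprime to $2$, the ideals $(p_i^{n_i})$ are pairwise coprime in $\ZZ[\sqrt{2}i]$, and CRT in $\ZZ[\sqrt{2}i]$ yields
\aln{SL(2,\ZZ[\sqrt{2}i]/(q))\cong\prod_iSL(2,\ZZ[\sqrt{2}i]/(p_i^{n_i})).}
The counting at the end of the proof of Lemma~\ref{pcase} shows that $A_{54}(p^m)$ already exceeds half of $SL(2,\ZZ[\sqrt{2}i]/(p^m))$; being a subgroup, by Lagrange it must equal the whole group. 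Combining with (1) yields (2).

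For (3), the forward inclusion is automatic from $\Gamma\subset SL(2,\ZZ[\sqrt{2}i])$, so the content is that every element of the $SL$-kernel lifts to $\Gamma/\Gamma(p^l)$. I would proceed by induction on $l$, with trivial base cases ($l=3$ for $p=2$ and $l=1$ for $p=3$). At each inductive step, the reduction kernel $\ker(SL(2,\ZZ[\sqrt{2}i]/(p^l))\to SL(2,\ZZ[\sqrt{2}i]/(p^{l-1})))$ is, additively, the six-dimensional $\mathbb{F}_p$-vector space $\mathfrak{sl}_2(\ZZ[\sqrt{2}i]/p)$, and the six matrix identities displayed at the end of the proof of Lemma~\ref{p23} (one set for $p=2$, one for $p=3$) produce, for each coordinate direction, an element of $\Gamma$ in $I+p^{l-1}M_2(\ZZ[\sqrt{2}i])$ realizing that displacement; applied one level at a time, the induction closes. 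The main obstacle lies precisely here: one must verify that the six identities genuinely span $\mathfrak{sl}_2(\ZZ[\sqrt{2}i]/p)$ at every level (not only when aggregated up to $2^6$ as in the proof of Lemma~\ref{p23}), and handle the small cases $l\in\{4,5\}$ for $p=2$ by a direct finite check if the spanning argument is not immediately available at those levels.
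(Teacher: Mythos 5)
Your proposal matches the paper's (implicit) approach: the paper offers no argument beyond the assertion that the lemma ``follows directly'' from Proposition~\ref{gammaproduct}, so what you are really doing is unpacking the content of Lemmas~\ref{pcase},~\ref{p23} and Proposition~\ref{gammaproduct}. Parts (1) and (2) are sound as you present them; for (2) your Lagrange observation (that $\Gamma/\Gamma(p^m)$ is a \emph{subgroup} of $SL(2,\ZZ[\sqrt2 i]/(p^m))$ of size more than half, hence equal to it) is a slightly cleaner route to the same conclusion than the paper's ``large product set covers'' counting.

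For (3) your picture is an oversimplification of what actually happens in the proof of Lemma~\ref{p23}, and the obstacle you flag is genuine, not a cosmetic worry. The Claim there does \emph{not} perturb one level at a time by elements $I+2^{m-1}X$: the correction matrices $x_i$ are only required to satisfy $x_i\equiv 0\ (2^{m-3})$, and several of the six displayed identities take inputs at depth $2^{m-3}$ and rely on cancellation inside $x_1 + M_2 x_2 M_2^{-1} + M_2^2 x_3 M_2^{-2}$ to land at depth $2^{m-1}$. Because of this, the induction is anchored at the base case $m=6$ (so that $2^{m-3}\geq 2^3$ keeps the perturbations in $\Gamma_{C_3}(2^3)$), and the levels $3<l\leq 6$ are \emph{not} reached by the inductive mechanism at all; they are absorbed into the finite computation behind the reported index $|\Gamma/\Gamma(2^6)|=2^{26}$. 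So the ``direct finite check'' you hold in reserve for $l\in\{4,5\}$ is not an optional fallback but exactly what the argument depends on (and in fact the check must also include $l=6$). Likewise, the six identities should not be read as individually producing the six coordinate directions of $\mathfrak{sl}_2(\ZZ[\sqrt2 i]/2)$ at level $2^{m-1}$: they are a system from which suitable $\mathbb{F}_2$-linear combinations of the inputs $x_1,x_2,x_3$ (each living three levels deeper) produce the desired outputs, and verifying that the outputs span is the real work. Your proposal correctly locates where the work is, but as written it substitutes a simpler induction scheme than the one the paper actually runs, and leaves the spanning and small-level verifications open.
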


Since $\rho:SL(2,\ZZ[\sqrt{2}i]/(p_i^{m_i}))\longrightarrow SO_Q(\ZZ/p_i^{m_i}\ZZ)$ is surjective for each $i$,  the above theorem also holds for $\Aa$.  We state it here:

\begin{lem}\label{aproduct}\text{}\\
(1) If q=$\prod_{i}p_i^{n_i}$, then $\Aa/\Aa(q) \cong\prod_{i}\Aa/\Aa(p_i^{n_i})$,\\
(2) If (q,6)=1, then $\Aa/\Aa(q)=SO_Q(\ZZ/q\ZZ)$.\\
(3) If $l\geq3$, then the kernel of $\Aa/\Aa(2^l)\longrightarrow \Aa/\Aa(8)$ is the full of the kernel of $SO_Q(\ZZ/2^l\ZZ)\longrightarrow SO_Q(\ZZ/8\ZZ)$; If $l\geq 1$, then the kernel of $\Aa/\Aa(3^l)\longrightarrow\Aa/\Aa(3)$ is the full of the kernel of $SO_Q(\ZZ/3^l\ZZ)\longrightarrow SO_Q(\ZZ/3\ZZ)$.
\end{lem}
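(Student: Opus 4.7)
The plan is to deduce each part of Lemma \ref{aproduct} from the corresponding assertion about $\Gamma$ by transporting it through the spin homomorphism $\rho$. The key input, which the paper states just before the lemma, is that the induced quotient map
\[
\bar\rho_{p^m}:SL(2,\ZZ[\sqrt{2}i]/(p^m))\longrightarrow SO_Q(\ZZ/p^m\ZZ)
\]
is surjective for every prime power $p^m$. Since the defining polynomial formula for $\rho$ has entries that are integer linear combinations of products of entries of the source matrix (after using $ad-bc=1$ to clear half-integers), $\rho$ sends principal congruence kernels into principal congruence kernels, so it induces a compatible surjection $\rho_q:\Gamma/\Gamma(q)\twoheadrightarrow\Aa/\Aa(q)$ for each $q$. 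This reduces the three assertions for $\Aa$ to the established ones for $\Gamma$.

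For part (1), I would apply $\rho_q$ to the CRT isomorphism for $\Gamma/\Gamma(q)$ in the previous lemma. Surjectivity at each prime factor gives a surjection $\Aa/\Aa(q)\twoheadrightarrow\prod_i\Aa/\Aa(p_i^{n_i})$, and the opposite inclusion follows since $\Aa(q)=\bigcap_i\Aa(p_i^{n_i})$ by definition. For part (2), when $(q,6)=1$ we have $\Gamma/\Gamma(q)=SL(2,\ZZ[\sqrt{2}i]/(q))$ by the $\Gamma$-version, so $\Aa/\Aa(q)=\rho_q(\Gamma/\Gamma(q))=SO_Q(\ZZ/q\ZZ)$ by the asserted surjectivity. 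For part (3), the claim is a routine diagram chase: one compares the short exact sequences for reduction $2^l\to 8$ (resp.\ $3^l\to 3$) on the $SL_2$ and $SO_Q$ sides and uses that $\rho$ is surjective on both the quotient and the target of the reduction map, together with the full-kernel property on the $\Gamma$ side.

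The main technical obstacle is the prime $p=2$. Because the polynomial formula for $\rho$ carries half-integer coefficients, the descent $\rho:\Gamma/\Gamma(2^l)\to\Aa/\Aa(2^l)$ is not automatic, and extra care is needed to see that $\rho$ really does land in $\Aa(2^l)$ (not merely $\Aa(2^{l-1})$) and that the map $\bar\rho_{2^l}$ is surjective. This reduces to verifying, on explicit generators or by an algebraic identity using $ad-bc=1$, that the factors of $1/2$ in the spin formula actually cancel modulo $2^l$. Once this surjectivity on $2$-power quotients is checked, the argument above goes through uniformly for all primes and yields all three statements.
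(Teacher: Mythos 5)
Your proposal matches the paper's own (one-line) proof: the paper deduces the lemma for $\Aa$ directly from the preceding lemma for $\Gamma$ by invoking the surjectivity of the reduced spin maps $SL(2,\ZZ[\sqrt{2}i]/(p^m))\to SO_Q(\ZZ/p^m\ZZ)$, which is exactly your transport-through-$\rho$ argument with the CRT and diagram-chase details filled in. You are also right to flag the $p=2$ subtlety — the paper itself remarks earlier that $\rho$ descends to congruence quotients ``for any $q$ that does not contain a power of $2$'' and then uses the lemma at $2$-power levels without comment, so the explicit verification you call for (that the half-integer coefficients in the spin formula cancel modulo $2^l$, and that surjectivity holds on $2$-power quotients) is a genuine loose end in the paper's exposition that your writeup correctly identifies.
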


Now we can study the local obstruction of $\PC$.  We let $V$ be the set of vectors $\Gamma\cdot\bd{r}$ and $V_d$ be the reduction of $V\text{(mod }d)$.
We define $C_{p^m}$ as follows:
\begin{itemize}
\item if $p\geq 3$,
$$C_{p^m}=\{\bd{v}\in(\ZZ/p^m\ZZ)^4|Q(\bd{v})\equiv 0(\text{mod }p^m)\}$$
\item if p=2,
$$C_{2^m}=\{\bd{v}\in(\ZZ/2^m\ZZ)^4|Q(\bd{v})\equiv 0(\text{mod }2^m),\exists \bd{w}\equiv \bd{v}(2^{m}), Q(\bd{w})\equiv 0(\text{mod }2^{m+1})\}$$
\end{itemize}
Let $$\pi_{p^m}:C_{p^{m+1}}\longrightarrow C_{p^m}$$
be the canonical projection.  We have following lemmata: 
\begin{lem}\label{cp}If $p\geq 5$, then 
$$V_{p^m}=C_{p^m}$$
\end{lem}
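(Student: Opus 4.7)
The plan is to establish both inclusions. The containment $V_{p^m}\subseteq C_{p^m}$ is immediate: since $\Aa\subset SO_Q(\ZZ)$ preserves $Q$ and $Q(\bd{r})=0$ by \eqref{quadratic}, every $\gamma\bd{r}\in V$ satisfies $Q(\gamma\bd{r})=0$, so its reduction modulo $p^m$ lies in $C_{p^m}$.

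For the reverse inclusion, I would invoke Lemma \ref{aproduct}(2). Since $(p,6)=1$ when $p\geq 5$, reduction yields a surjection $\Aa\twoheadrightarrow SO_Q(\ZZ/p^m\ZZ)$, so $V_{p^m}$ equals the $SO_Q(\ZZ/p^m\ZZ)$-orbit of $\bd{r}\bmod p^m$. It therefore suffices to show this orbit equals $C_{p^m}$, with the harmless caveat that the fixed point $\bd{0}$ (irrelevant for the application to curvatures) may be treated separately; if needed, one replaces $\bd{r}$ by some $\gamma_0\bd{r}\in V$ with $\gamma_0\bd{r}\not\equiv\bd{0}\pmod{p}$, which exists by primitivity of $\PC$. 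I would prove this orbit equality by induction on $m$.

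For the base case $m=1$, the form $Q$ is nondegenerate over $\mathbb{F}_p$ since $\det Q=-2\not\equiv 0\pmod{p}$ for $p\geq 5$, and is isotropic since $\bd{r}$ is a nonzero isotropic vector. Witt's extension theorem gives transitivity of $O_Q(\mathbb{F}_p)$ on nonzero isotropic vectors; since the $O_Q$-stabilizer of such a vector contains a reflection in its anisotropic orthogonal complement, $SO_Q(\mathbb{F}_p)$ is also transitive. For the inductive step, given $\bar{v}\in C_{p^m}$, reduce modulo $p^{m-1}$, apply the induction hypothesis to obtain $\bar\gamma'\in SO_Q(\ZZ/p^{m-1}\ZZ)$ with $\bar\gamma'\bd{r}\equiv\bar{v}\pmod{p^{m-1}}$, and lift to $\bar\gamma\in SO_Q(\ZZ/p^m\ZZ)$ via smoothness of $SO_Q$ over $\ZZ_p$ (valid because $p\nmid\det Q$). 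Writing $\bar\gamma\bd{r}-\bar{v}\equiv p^{m-1}\bd{w}\pmod{p^m}$, the identities $Q(\bar\gamma\bd{r})=0$ and $Q(\bar{v})\equiv 0\pmod{p^m}$ force $\bd{w}$ into the tangent space $T_{\bar\gamma\bd{r}}\{Q=0\}$ over $\mathbb{F}_p$. One then corrects by $\delta=I+p^{m-1}X$ with $X\in\mathfrak{so}_Q(\mathbb{F}_p)$ chosen so that $X(\bar\gamma\bd{r})\equiv-\bd{w}\pmod{p}$; such an $X$ exists because the evaluation map $\mathfrak{so}_Q(\mathbb{F}_p)\to T_{\bar\gamma\bd{r}}\{Q=0\}$, $X\mapsto X\bar\gamma\bd{r}$, is surjective by orbit--stabilizer applied to the base-case transitivity.

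The main obstacle I anticipate is careful bookkeeping in the inductive step: ensuring $\bar\gamma\bd{r}\not\equiv\bd{0}\pmod{p}$ throughout (so the infinitesimal action argument applies), and confirming that $I+p^{m-1}X$ genuinely lies in $SO_Q(\ZZ/p^m\ZZ)$ rather than only in the ambient $GL$. The latter reduces to the Lie-algebra condition $X^{T}M+MX=0$ (where $M$ is the Gram matrix of $Q$) together with $m\geq 2$, which ensures the quadratic correction $p^{2m-2}X^{T}MX$ vanishes modulo $p^m$.
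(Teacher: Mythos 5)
Your argument is correct and runs along the same lines as the paper's: both reduce the lemma, via Lemma~\ref{aproduct}(2), to transitivity of $SO_Q(\ZZ/p^m\ZZ)$ on primitive isotropic vectors. The paper records that transitivity as a bare ``fact,'' whereas you supply a proof (Witt's extension theorem for $m=1$, then a Hensel/Lie-algebra lift). Two comments. The caveat you raise about $\bd{0}$ (and, once $m\geq 2$, about non-primitive vectors such as $p\bd{v}$) is a genuine imprecision in the paper's statement: such vectors lie in $C_{p^m}$ as defined but in no $SO_Q$-orbit of a primitive vector, so the equality $V_{p^m}=C_{p^m}$ should really be read on primitive parts --- harmless, since the application (Theorem~\ref{localobstruction}) only needs the lifting statement $\pi_{p^{m+1}}^{-1}(V_{p^{m}})=V_{p^{m+1}}$. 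Also, in your inductive step the appeal to ``orbit--stabilizer'' is slightly loose, since that controls group orders and does not by itself give surjectivity of the linearization; but the needed claim, that $X\mapsto X\bar{\gamma}\bd{r}$ maps $\mathfrak{so}_Q(\mathbb{F}_p)$ onto $(\bar{\gamma}\bd{r})^{\perp_Q}$, is elementary: writing $\mathfrak{so}_Q=\{M^{-1}A:A^{T}=-A\}$ with $M$ the Gram matrix, and noting that $\{Av:A^{T}=-A\}=\{w:w^{T}v=0\}$ for any $v\neq0$ (take $A=wu^{T}-uw^{T}$ with $u^{T}v=1$), the image is $M^{-1}\{w:w^{T}v=0\}=\{u:B(u,v)=0\}$, which is exactly the tangent space to $\{Q=0\}$ at $v$. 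So the proof is sound and strictly more detailed than the paper's one-line citation.
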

\begin{proof}
This follows from Lemma \ref{aproduct},  and the fact that $SO_Q(\ZZ/p^m\ZZ)$ acts transitively on $C_{p^m}$.
\end{proof}

When $p=2,3$, the argument in Lemma \ref{cp} does not work because $\Gamma$ reduced at these local places is not the full group of $SL_2$.  But in each case the lifting will saturate for some finite $m$, as shown in Lemma \ref{c3} and \ref{c2}. In the case $p=3$, $\Gamma(\ZZ[\sqrt{2}i]/(3^m))$ is actually big enough to make $V_{3^m}=C_{3^m}$:
\begin{lem}\label{c3}
If $p=3$, then 
$$V_{3^m}=C_{3^m}.$$
\end{lem}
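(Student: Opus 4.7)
The plan is induction on $m$, lifting solutions mod $3^{m-1}$ to solutions mod $3^m$ via a Hensel-type argument powered by Lemma~\ref{aproduct}(3). For the base case $m=1$, one verifies $V_3 = C_3$ by a direct finite calculation: since $C_3$ is a small finite set, one simply enumerates the $\Aa$-orbit of $\bd{r}\bmod 3$ using sufficiently many words in the generators \eqref{generator} and checks that every $\bd{v}\in C_3$ is hit.

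For the inductive step, suppose $V_{3^{m-1}} = C_{3^{m-1}}$ and fix $\bd{v} \in C_{3^m}$. Reducing mod $3^{m-1}$, the inductive hypothesis yields $\gamma_0 \in \Aa$ with $\bd{v}_0 := \gamma_0 \bd{r} \equiv \bd{v} \pmod{3^{m-1}}$. Writing $\bd{v} \equiv \bd{v}_0 + 3^{m-1}\bd{u} \pmod{3^m}$ and expanding the relation $Q(\bd{v}) \equiv 0 \equiv Q(\bd{v}_0)\pmod{3^m}$ forces $B(\bd{v}_0, \bd{u}) \equiv 0 \pmod 3$, where $B$ denotes the polar form of $Q$. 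The task therefore reduces to finding $\gamma_1 \in \Aa(3^{m-1})$ with $\gamma_1 \equiv I + 3^{m-1}X \pmod{3^m}$ for some $X \in \mathfrak{so}_Q(\ZZ/3\ZZ)$ satisfying $X\bd{v}_0 \equiv \bd{u}\pmod 3$, since then $\gamma_1\gamma_0 \cdot \bd{r} \equiv \bd{v} \pmod{3^m}$.

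By Lemma~\ref{aproduct}(3), the kernel of $\Aa/\Aa(3^m) \to \Aa/\Aa(3^{m-1})$ coincides with the kernel of $SO_Q(\ZZ/3^m\ZZ) \to SO_Q(\ZZ/3^{m-1}\ZZ)$, which is naturally identified with the full $\mathfrak{so}_Q(\ZZ/3\ZZ)$ via $X \mapsto I + 3^{m-1}X$; hence every such $X$ is realised inside $\Aa$. It remains to show that the linear map $X \mapsto X\bd{v}_0$ from $\mathfrak{so}_Q(\ZZ/3\ZZ)$ to $(\ZZ/3\ZZ)^4$ has image exactly $\bd{v}_0^\perp$. The image lies in $\bd{v}_0^\perp$ by antisymmetry ($B(X\bd{v}_0,\bd{v}_0)=0$), and a dimension count ($\dim \mathfrak{so}_Q = 6$, $\dim \bd{v}_0^\perp = 3$, and $\dim$ of the stabilizer of $\bd{v}_0$ in $\mathfrak{so}_Q$ is $3$) then gives equality, provided $\bd{v}_0 \not\equiv 0 \pmod 3$, i.e., $\bd{v}_0$ is a regular point of $C$; recall the only singular point of $Q$ mod $3$ is the origin.

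The main technical hurdle is Lemma~\ref{aproduct}(3), which is already in hand; the remainder is a short linear-algebra argument plus a finite base-case verification. A subsidiary point is ensuring that one can always take $\bd{v}_0 \not\equiv 0 \pmod 3$, but since $\Aa$ acts by integral linear transformations of determinant $\pm 1$, primitivity of $\bd{r}$ mod $3$ propagates throughout the orbit, and this primitivity is checked from the initial configuration of the packing.
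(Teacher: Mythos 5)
Your argument is correct, but it takes a genuinely different and more conceptual route than the paper's. The paper proves this lemma by \emph{explicit} effective lifting: after a computer search that finds elements $T_1,\ldots,T_{27}\in\Aa\cap SO_Q(\ZZ)(3)$ realizing all lifts of $\bd{r}$ from mod~$3$ to mod~$9$, it bootstraps to general $m$ by taking $3^m$-th powers of the $T_i$, which is the same style of hands-on congruence argument used in Lemma~\ref{c2} (following Fuchs) for $p=2$. You instead run a Hensel-type induction at the level of the Lie algebra: you reduce to showing that the evaluation map $X\mapsto X\bd{v}_0$ from $\mathfrak{so}_Q(\ZZ/3\ZZ)$ surjects onto $\bd{v}_0^\perp$, then use Lemma~\ref{aproduct}(3) to transfer the $SO_Q$-lift back into $\Aa$. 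Both approaches need the base case $V_3=C_3$ (a finite check), but yours replaces the computer-found $T_i$'s and their powers with a soft dimension count ($\dim\mathfrak{so}_Q=6$, stabilizer of a nonzero vector has dimension $3$, so the image is all of $\bd{v}_0^\perp$); the paper's argument is self-contained and effective, while yours is shorter and makes transparent exactly where the group-theoretic input (Lemma~\ref{aproduct}(3)) enters.

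One point you should make explicit: the surjectivity argument requires $\bd{v}_0$ to be a regular point of the cone mod~$3$, i.e.\ $\bd{v}_0\not\equiv 0\pmod 3$. You correctly observe this propagates from $\bd{r}$ along the $\Aa$-orbit, but it would be worth verifying that primitivity of the packing does force $\bd{r}\not\equiv 0\pmod 3$ (it does: if $\bd{r}\equiv 0$ then $w\equiv 0$, hence $\bd{r}'\equiv 0$ as well via $\kappa_{i'}=2w-\kappa_i$, so every curvature would be divisible by~$3$, contradicting primitivity). Also note that the identification of $\ker\bigl(SO_Q(\ZZ/3^m\ZZ)\to SO_Q(\ZZ/3^{m-1}\ZZ)\bigr)$ with $\mathfrak{so}_Q(\ZZ/3\ZZ)$ relies on smoothness of $SO_Q$ over $\ZZ_3$, which holds because $\det Q=-2$ is a unit mod~$3$; you should say this in one line since $p=3$ is otherwise a ``bad'' prime for $\Aa$.
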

\begin{proof}
Using a program, we can check that $|V_3|=|C_3|=27$, and moreover, there exist $T_1,\hdots ,T_{27}\in \Aa\cap SO_{Q}(\ZZ)(3)$ such that all the solutions of $Q(\bd{v})\equiv 0(\text{mod }9)$ lying above $\bd{r}$ is given by:
\aln{
 T_1(\bd{r})=\bd{r}+&(T_1-I)\bd{r} \text{ (mod 3)}\\
&\vdots\\
 T_{27}(\bd{r})=\bd{r}+&(T_{27}-I)\bd{r} \text{ (mod 3)}.
}
Then for any $ m\geq 0$ the liftings from $V_{3^m}$ to $V_{3^{m+1}}$ are given by 
\aln{
 T_1^{3^{m}}(\bd{r})=\bd{r}+&(T_1-I)^{3^m}\bd{r} \text{ (mod }3^{m+1})\\
&\vdots\\
 T_{27}^{3^{m}}(\bd{r})=\bd{r}+&(T_{27}-I)^{3^m}\bd{r} \text{ (mod }3^{m+1})}
 We find that
$|V_{3^m}|=|C_{3^m}|$.
 \end{proof}
 
 \begin{lem} \label{c2}If $p=2$, then for $m\geq 3$,
 $$\pi_{2^{m+1}}^{-1}(V_{2^m})=V_{2^{m+1}}$$
 \end{lem}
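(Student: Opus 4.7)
The plan is to mirror the strategy of Lemma \ref{c3}: reduce the lifting question to a characteristic-$2$ Lie algebra statement via Lemma \ref{aproduct}(3), and settle the latter by explicit enumeration.

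The inclusion $V_{2^{m+1}}\subseteq \pi_{2^{m+1}}^{-1}(V_{2^m})$ is automatic. For the reverse direction, fix $\bd{w}\in C_{2^{m+1}}$ whose reduction modulo $2^m$ lies in $V_{2^m}$. Choose $\gamma_0\in\Aa$ with $\gamma_0\cdot\bd{r}\equiv\bd{w}\pmod{2^m}$, set $\bd{v}':=\gamma_0\cdot\bd{r}\in\ZZ^4$, and write $\bd{w}\equiv\bd{v}'+2^m\bd{u}\pmod{2^{m+1}}$. Because the polarization $B$ of $Q$ has all entries in $2\ZZ$ (so $\tilde{B}:=B/2$ is integral), a quick calculation using $Q(\bd{v}')=0$ and $m\geq3$ shows that the condition $\bd{w}\in C_{2^{m+1}}$ (liftability to $Q\equiv 0\pmod{2^{m+2}}$) is equivalent to the single linear constraint $\tilde{B}(\bd{v}',\bd{u})\equiv 0\pmod 2$. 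We therefore seek $\delta\in\Aa(2^m)$ with $\delta\cdot\bd{v}'\equiv \bd{w}\pmod{2^{m+1}}$.

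By Lemma \ref{aproduct}(3), for every $m\geq 3$ the image of $\Aa(2^m)$ in $\Aa/\Aa(2^{m+1})$ coincides with the full kernel of $SO_Q(\ZZ/2^{m+1}\ZZ)\to SO_Q(\ZZ/2^m\ZZ)$, that is, with the set of matrices $I+2^mX$ where $X$ ranges over the mod-$2$ Lie algebra $\mathfrak{so}_Q(\ZZ/2\ZZ)$. Concretely, for each $X\in\mathfrak{so}_Q(\ZZ/2\ZZ)$ I pick (by the $m=3$ case of (3)) a lift $T_X\in\Aa(8)$ with $T_X\equiv I+8X\pmod{16}$. A straightforward binomial induction on $k$ gives $T_X^{2^k}\equiv I+2^{k+3}X\pmod{2^{k+4}}$, so $T_X^{2^{m-3}}\equiv I+2^mX\pmod{2^{m+1}}$, mirroring the $T_i\mapsto T_i^{3^m}$ step in the proof of Lemma \ref{c3}. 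Acting on $\bd{v}'$ yields $T_X^{2^{m-3}}\cdot\bd{v}'\equiv\bd{v}'+2^mX\bd{v}'\pmod{2^{m+1}}$, so the problem reduces to solving $X\bd{v}'\equiv\bd{u}\pmod 2$ for some $X\in\mathfrak{so}_Q(\ZZ/2\ZZ)$.

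The whole question therefore collapses to the following finite linear-algebra claim: for every $\bar{\bd{v}}\in(\ZZ/2\ZZ)^4$ in the $\Aa$-orbit of $\bd{r}$ modulo $2$, the map $\mathfrak{so}_Q(\ZZ/2\ZZ)\to(\ZZ/2\ZZ)^4$, $X\mapsto X\bar{\bd{v}}$, surjects onto the codimension-one subspace $\{\bar{\bd{u}}:\tilde{B}(\bar{\bd{v}},\bar{\bd{u}})\equiv 0\pmod 2\}$. This is the step I expect to be the main obstacle: modulo $2$ one has $Q\equiv x+y+z+w$, genuinely linear, so the Gram matrix of $Q$ is singular over $\ZZ/2\ZZ$ (with kernel spanned by $(1,1,1,1)^T$), and the standard codimension count for orthogonal Lie algebras breaks down. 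I would close the argument by explicit enumeration: list the (small) $\Aa$-orbit $V_2$ of $\bd{r}$ in $(\ZZ/2\ZZ)^4$, and for each representative $\bar{\bd{v}}$ verify directly — by hand or by computer, exactly in the spirit of Lemma \ref{c3} — that the linear map $X\mapsto X\bar{\bd{v}}$ from $\mathfrak{so}_Q(\ZZ/2\ZZ)$ has the required three-dimensional image, completing the proof.
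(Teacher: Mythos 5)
Your approach diverges from the paper's: where the paper (following Fuchs) writes down three explicit words $W(m)=(S_{1'23}S_{1'2'3})^{2^{m-3}}$, $X(m)=(S_{12'3}S_{12'3^{'}})^{2^{m-3}}$, $Y(m)=(S_{123'}S_{1'2'3})^{2^{m-4}}$ and verifies by hand that their action on $\bd{r}$ realizes all eight admissible lifts mod $2^{m+1}$, you try to derive the same conclusion abstractly from Lemma~\ref{aproduct}(3) and a Lie-algebra argument. Your setup of the liftability constraint $\tilde{B}(\bd{v}',\bd{u})\equiv 0\pmod 2$, the $2^{m-3}$-th-power trick $T_X^{2^{m-3}}\equiv I+2^m X\pmod{2^{m+1}}$, and the reduction to a finite surjectivity check are all sound ideas.

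The gap is in the step identifying $\ker\bigl(SO_Q(\ZZ/2^{m+1}\ZZ)\to SO_Q(\ZZ/2^m\ZZ)\bigr)$ with $\{I+2^m X: X\in\mathfrak{so}_Q(\ZZ/2\ZZ)\}$. You flag the difficulty yourself — the Gram matrix $G$ of $Q$ has all entries even, so the naive Lie-algebra condition $X^{T}G+GX\equiv 0\pmod 2$ is vacuous, and $\mathfrak{so}_Q(\ZZ/2\ZZ)$ in that naive sense is essentially all of $M_4(\ZZ/2\ZZ)$ — but you do not resolve it, and the rest of the argument depends on having done so. The correct parametrization is some strictly smaller subset of matrices cut out by subtler congruences (for instance, writing $\tilde{G}=G/2$ and requiring $\tilde{G}X$ to be symmetric mod $2$ together with further conditions coming from order $2^{m+2}$); determining this set is not a routine computation, and it is precisely the subtlety the paper's explicit construction is engineered to bypass. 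Consequently, the ``explicit enumeration'' you defer to is not merely checking that $X\mapsto X\bar{\bd{v}}$ has three-dimensional image for a known source; it must first determine the source itself, which is the hard part. Until the parametrization of the kernel is pinned down, the chain of deductions does not close, whereas the paper's approach — exhibiting $W(m),X(m),Y(m)$ and directly listing the eight lifts of a representative $\bd{r}\equiv\langle 3,2,2,3\rangle\pmod 4$ — settles the lemma without ever needing the structure of the reduction kernel.
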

\begin{proof}
We prove this by effective lifting.  This argument is due to Fuchs \cite{Fu10}.  For $n\geq 3 $, let $W(m)=(S_{1^{'}23}.S_{1^{'}2^{'}3})^{2^{m-3}}, X(m)=(S_{12^{'}3}.S_{12^{'}3^{'}})^{2^{m-3}},Y(m)=(S_{123^{'}}.S_{1^{'}2^{'}3})^{2^{m-4}}$. Then
\aln{
&W(n)=\mat
{1&0&0&2^{m-1}\\
2^{m-1}&1+2^{m-1}&2^{m-1}&2^{m-1}\\
0&0&1&0\\
0&2^{m-1}&0&1+2^{m-1}
},\\
&X(m)=\mat
{1&0&0&0\\
0&1&0&0\\
2^{m-1}&2^{m-1}&1+2^{m-1}&2^{m-1}\\
0&0&2^{m-1}&1+2^{m-1}
},\\
&Y(m)=\mat{1-2^{m-2}&-2^{m-2}&2^{m-2}&-2^{m-2}\\
-2^{m-2}&1-2^{m-2}&2^{m-2}&-2^{m-2}\\
2^{m-2}&-2^{m-2}&1+2^{m-2}&-2^{m-2}\\
-2^{m-2}&-2^{m-2}&2^{m-2}&1+2^{m-2}
}.
}

Then for example if $\bd{r}\equiv\langle3,2,2,3\rangle\text{(mod 4)}$, then 
\aln{
&I\bd{r}\equiv\bd{r}+2^{m-1}\langle0,0,0,0\rangle(\text{mod } 2^m)\\
&W(m)\bd{r}\equiv\bd{r}+2^{m-1}\langle1,0,0,1\rangle(\text{mod } 2^m)\\
&X(m)\bd{r}\equiv\bd{r}+2^{m-1}\langle0,0,0,1\rangle(\text{mod } 2^m)\\
&Y(m)\bd{r}\equiv\bd{r}+2^{m-1}\langle1,1,1,0\rangle(\text{mod } 2^m)
\\
&W(m)X(m)\bd{r}\equiv\bd{r}+2^{m-1}\langle1,0,0,0\rangle(\text{mod } 2^m)
\\
&W(m)Y(m)I\bd{r}\equiv\bd{r}+2^{m-1}\langle0,1,1,1\rangle(\text{mod } 2^m)
\\
&X(m)Y(m)\bd{r}\equiv\bd{r}+2^{m-1}\langle1,1,1,1\rangle(\text{mod } 2^m)
\\
&W(m)X(m)Y(m)\bd{r}\equiv\bd{r}+2^{m-1}\langle0,1,1,0\rangle(\text{mod } 2^m)
}
\end{proof}

Collecting the result from Lemma \ref{cp} to Lemma \ref{c2},  we obtain the following proposition which describes the local structure of $V$.

\begin{thm}\label{localobstruction}\text{ }\\
(1)$V_q\cong \prod_{i}V_{p_i^{n_i}}$,\\
(2)$\pi_{p^{m+1}}^{-1}(V_{p^{m}})=V_{p^{m+1}}$ for $p\geq 3$ and $m\geq 0$,\\
(3)$\pi_{2^{m+1}}^{-1}(V_{2^{m}})=V_{2^{m+1}}$ for $p=2$ and $m\geq 3$.
\end{thm}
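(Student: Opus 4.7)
The plan is that Theorem \ref{localobstruction} is a direct repackaging of Lemmas \ref{cp}, \ref{c3}, and \ref{c2}, together with the product decomposition of $\Aa/\Aa(q)$ supplied by Lemma \ref{aproduct}(1); no substantively new argument is required. I would therefore organize the write-up in three short pieces, one per item of the theorem.

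For (1), I would note that $V=\Aa\cdot\bd{r}$ by construction, so its reduction $V_q$ is the orbit of the residue of $\bd{r}$ under $\Aa/\Aa(q)$. By Lemma \ref{aproduct}(1), the Chinese Remainder Theorem gives $\Aa/\Aa(q)\cong\prod_i\Aa/\Aa(p_i^{n_i})$. Applying this factor-by-factor to $\bd{r}$ shows that an element of $V_q$ is determined by, and can be freely prescribed from, its residues $v\bmod p_i^{n_i}\in V_{p_i^{n_i}}$, which is exactly the asserted product isomorphism. For (3), Lemma \ref{c2} already states the required lifting property at $p=2$ with $m\geq 3$, so nothing further is needed.

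For (2), both subcases $p\geq 5$ (Lemma \ref{cp}) and $p=3$ (Lemma \ref{c3}) establish the stronger identification $V_{p^m}=C_{p^m}$ for every $m\geq 0$. Consequently (2) reduces to the fact that the natural projection $\pi_{p^{m+1}}:C_{p^{m+1}}\to C_{p^m}$ is surjective for $p\geq 3$, which is a one-line Hensel-lifting check: the partial derivatives of $Q(\kappa_1,\kappa_2,\kappa_3,w)=w^2-2w(\kappa_1+\kappa_2+\kappa_3)+\kappa_1^2+\kappa_2^2+\kappa_3^2$ vanish simultaneously modulo $p\geq 3$ only at the origin (since we can invert $2$), and the origin lifts trivially, so any solution of $Q\equiv 0\pmod{p^m}$ lifts to a solution modulo $p^{m+1}$. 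Since the heavy lifting has already been done in the preceding lemmas, I do not anticipate any real obstacle; the only care needed is the elementary Hensel step for part (2) and the mild bookkeeping of whether $\pi_{p^{m+1}}^{-1}$ is interpreted inside $C_{p^{m+1}}$ or inside all of $(\ZZ/p^{m+1}\ZZ)^4$, which makes no difference once $V_{p^{m+1}}=C_{p^{m+1}}$ is in hand.
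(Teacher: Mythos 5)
Your proposal is essentially the paper's own (implicit) proof: the paper states Theorem~\ref{localobstruction} with the one-line justification ``collecting the result from Lemma~\ref{cp} to Lemma~\ref{c2},'' and you correctly identify the three pieces, plus Lemma~\ref{aproduct}(1) for the Chinese-Remainder factorization needed in~(1). Parts~(1) and~(3) of your write-up are exactly right.

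For~(2), however, the Hensel digression is both unnecessary and slightly misleading. Since Lemmas~\ref{cp} and~\ref{c3} give $V_{p^m}=C_{p^m}$ for \emph{all} $m\ge 0$ when $p\ge 3$, and since $\pi_{p^{m+1}}$ is defined in the paper as a map $C_{p^{m+1}}\to C_{p^m}$ (not as a map on ambient $(\ZZ/p^{m+1}\ZZ)^4$), the preimage of the full codomain is the full domain: $\pi_{p^{m+1}}^{-1}(V_{p^m})=\pi_{p^{m+1}}^{-1}(C_{p^m})=C_{p^{m+1}}=V_{p^{m+1}}$. No surjectivity statement, and hence no Hensel step, is needed. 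Your closing remark that the choice of ambient space ``makes no difference'' is also not right: if $\pi^{-1}$ were taken inside all of $(\ZZ/p^{m+1}\ZZ)^4$, then $\pi^{-1}(C_{p^m})$ would include many lifts that fail $Q\equiv 0\pmod{p^{m+1}}$ and would strictly contain $C_{p^{m+1}}$, so~(2) would be false under that reading, and surjectivity of $\pi$ would not rescue it. The statement holds precisely because the paper's $\pi$ is the restriction to the variety $C$, which is the interpretation you should adopt and state explicitly. With that fix, your argument coincides with the paper's.
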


Lemma \ref{local}, thus Theorem \ref{reduction} then follow directly from Theorem \ref{localobstruction} because the first three components of $V$ are curvatures. \\

Now we prove Theorem \ref{spec}.  Bourgain, Gamburd and Sarnak \cite{BGS11} established an equivalence between a geometric spectral gap and  a combinatorial spectral gap for a finitely generated Fuchsian group $F$. Let $S$ be a finite symmetric ($S=S^{-1}$) generating set of $F$.  For each $q$,  we have a Cayley graph of $F/F(q)$ over S.  There's a Markov operator (which is a discrete version of Laplacian) on the functions of this Cayley graph.  A Combinatorial spectral gap is then a uniform positive lower bound of the distance between the biggest two eigenvalues $\lambda_0^{'}(q)=1$ and $\lambda_1^{'}(F(q),S)$ of this operator.  Later this equivalence is generalized by Kim \cite{Ki11} to Kleinian groups,  which applies to our case $\Gamma$.   From the celebrated Selberg's $\frac{3}{16}$ theorem we know there are geometric spectral gaps for $\Gamma_{C_3},\Gamma_{C_1},\Gamma_{C_{3^{'}}}$.  It then follows that the combinatorial gaps exist for these groups from \cite{BGS10}.  Now we apply Varj\"{u}'s lemma in the Appendix of \cite{BK13}:
\begin{lem}[Varj\"{u}]\label{varju}
Let $G$ be a finite group and $S\subset G$ a finite symmetric generating set.  Let $G_1,G_2,...,G_k$ be subgroups of $G$ such that for every $g\in G$ there are $g_1\in G_1,\hdots,g_k\in G_k$ such that $g=g_1\hdots g_k$.  Then
$$1-\lambda_1^{'}(G,S)\geq \min_{1\leq i\leq k}\left\{\frac{|S\cap G_i|}{|S|}.\frac{1-\lambda_1^{'}(G_i,S\cap G_i)}{2k^2}\right\}$$
\end{lem}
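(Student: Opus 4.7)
The plan is to establish the lemma via a path (comparison) method combined with the variational characterization of the spectral gap. Recall that
\[
1 - \lambda_1'(G, S) \;=\; \inf_{\substack{f:\,G\to\mathbb{R} \\ \sum_g f(g)=0,\,f\not\equiv 0}} \frac{\mathcal{E}_{G,S}(f)}{\|f\|_2^2},\qquad \mathcal{E}_{G,S}(f)\;=\;\frac{1}{2|S|}\sum_{g\in G}\sum_{s\in S}\bigl(f(gs)-f(g)\bigr)^2,
\]
and the analogous formula holds for each subgroup $(G_i, S\cap G_i)$. It therefore suffices to prove a Poincar\'e-type inequality $\|f\|_2^2 \leq C\cdot \mathcal{E}_{G,S}(f)$ for every mean-zero $f$ on $G$, where $C$ is the reciprocal of the right-hand side in the lemma.

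First I would expand the variance using the elementary identity $\|f\|_2^2 = \frac{1}{2|G|}\sum_{g,h\in G}(f(g)-f(h))^2$. For each ordered pair $(g,h)$, the factorization hypothesis produces a decomposition $g^{-1}h = e_1 e_2 \cdots e_k$ with $e_i\in G_i$, which I interpret as a path of length $k$
\[
g \;=\; g_0 \longrightarrow g_1 \longrightarrow \cdots \longrightarrow g_k \;=\; h, \qquad g_i \;=\; g\,e_1\cdots e_i,
\]
whose $i$-th edge $(g_{i-1},g_i)$ lies in a single left coset $g_{i-1}G_i$. Cauchy--Schwarz then yields $(f(g)-f(h))^2 \leq k\sum_{i=1}^k (f(g_i)-f(g_{i-1}))^2$, and summing over all $(g,h)$, then re-indexing via $y=g_{i-1}$, $e=e_i$, recasts $\|f\|_2^2$ as a weighted sum of coset-wise squared differences associated to each $G_i$.

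Each of these coset-wise contributions is then controlled by the subgroup Poincar\'e inequality. On any left coset $yG_i$, the restricted function $\phi(e):=f(ye)$ on $G_i$ satisfies
\[
\sum_{e\in G_i}\bigl(\phi(e)-\overline{\phi}\bigr)^2 \;\leq\; \frac{1}{1-\lambda_1'(G_i,S\cap G_i)}\,\mathcal{E}_{G_i,\,S\cap G_i}(\phi),
\]
and the right-hand side is precisely the portion of $\mathcal{E}_{G,S}(f)$ supported on moves by elements of $S\cap G_i$. This is where the factor $|S\cap G_i|/|S|$ appears: it arises from the mismatch between the normalizations $|S\cap G_i|^{-1}$ in $\mathcal{E}_{G_i,S\cap G_i}$ and $|S|^{-1}$ in $\mathcal{E}_{G,S}$. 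Summing over $i \in \{1,\ldots,k\}$ and passing to the worst constant produces the minimum appearing in the lemma.

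The main technical obstacle is the non-uniqueness of the factorizations $g^{-1}h = e_1 \cdots e_k$: different choices of path could yield different estimates whose constants are sensitive to the combinatorics of $G_i$-decompositions. The standard remedy is, for each $x\in G$, to average uniformly over the nonempty set $F(x)$ of admissible factorizations of $x$, thereby symmetrizing the estimate and yielding a bound that is independent of arbitrary choices. Tracking the resulting prefactors---one $k$ from Cauchy--Schwarz, a second $k$ from the summation over $i$, and a factor of $2$ absorbed into the symmetrization of the variance identity---gives exactly the $2k^2$ in the denominator of the claim.
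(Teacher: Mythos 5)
The paper does not prove Lemma \ref{varju}: it is stated with attribution to Varj\'u and cited from the appendix of \cite{BK13}. So there is no in-paper proof to compare against, and I will assess your argument on its own.

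Your overall plan --- variational characterization, factorization into length-$k$ paths, Cauchy--Schwarz, reduction to subgroup Poincar\'e inequalities --- is a legitimate route to the constant $2k^2$. But there is a concrete gap in the middle, and the remedy you propose does not repair it. After Cauchy--Schwarz and re-indexing you arrive at a bound of the form
\[
\|f\|_2^2 \;\leq\; \frac{k}{2|G|}\sum_{i=1}^k \sum_{e\in G_i} w_i(e)\sum_{y\in G}\bigl(f(y)-f(ye)\bigr)^2,
\]
where $w_i(e)$ counts how often $e$ occurs as the $i$-th factor. Two things go wrong if one tries to plug this straight into the subgroup Poincar\'e inequality: the element $e\in G_i$ need not be a generator in $S\cap G_i$, so $\sum_y(f(y)-f(ye))^2$ is not a term of $\mathcal{E}_{G_i,S\cap G_i}$; and the weights $w_i(e)$ are non-uniform on $G_i$. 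Averaging over the set $F(x)$ of factorizations of $x$, which you offer as the remedy, removes arbitrariness of choice but does not make $w_i(e)$ uniform --- that would require the multiplication map $G_1\times\cdots\times G_k\to G$ to have constant fiber size, which is not hypothesized. So the non-uniformity survives the averaging, and the coset Poincar\'e inequality cannot be applied as stated.

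The missing idea is a centering step. Let $P_i$ be the orthogonal projection onto functions constant on left $G_i$-cosets. Since $g_{i-1}$ and $g_i=g_{i-1}e_i$ lie in the same coset, $P_if(g_{i-1})=P_if(g_i)$, and hence
\[
\bigl(f(g_{i-1})-f(g_i)\bigr)^2 \;\leq\; 2\bigl(f(g_{i-1})-P_if(g_{i-1})\bigr)^2 + 2\bigl(f(g_i)-P_if(g_i)\bigr)^2,
\]
a bound that is \emph{uniform} in $e_i$, so the non-uniformity of $w_i(e)$ becomes harmless. For each fixed $x$ and $i$ the maps $g\mapsto g_{i-1}$ and $g\mapsto g_i$ are bijections on $G$, so each centered term sums to $\|f-P_if\|_2^2$; this produces $\|f\|_2^2\leq 2k\sum_i\|f-P_if\|_2^2\leq 2k^2\max_i\|f-P_if\|_2^2$, and the subgroup Poincar\'e inequality then gives $\|f-P_if\|_2^2\leq \frac{|S|}{|S\cap G_i|\,(1-\lambda_1'(G_i,S\cap G_i))}\,\mathcal{E}_{G,S}(f)$, yielding the lemma. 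Two clean-up remarks follow from this: the factor of $2$ enters through the centering inequality above, not through ``symmetrization of the variance identity'' as you attribute it; and once the centering trick is in place, the averaging over factorizations is unnecessary --- any single choice of factorization for each $x\in G$ suffices.
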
  

In our case $G$ is $\Gamma$(mod $q$),  $G_i$'s are $\Gamma_{C_3},\Gamma_{C_1}$ or $\Gamma_{C_{3^{'}}}$(mod $q$), in light of Proposition \ref{gammaproduct}.  And we let $S$ to be the union of $M_1,M_2,M_3,M_4,M_5,M_6,M_7^{-1}M_3$ and their inverses.  Clearly Lemma \ref{varju} provides a spectral gap for $\Gamma$. This implies a geometric spectral gap for $\Gamma$ again by \cite{BGS11}.

\section{Circle Method} 

In this chapter we are proving Theorem \ref{mainthm} via the Hardy-Littlewood circle method.  In \S 3.1 we set up the ensemble for the circle method.  In \S 3.2 we do major arc analysis, where we crucially use the spectral gap property of $\Gamma$ for several counts.  From \S3.3 to \S3.5 we do minor arc analysis, and several Kloosterman-type sums naturally appear here. \S 3.6 gathers all the previous results and finishes the proof of Theorem \ref{mainthm}.
\subsection{Setup of the circle method}
Recall that $\Gamma_{C_3}$ is a congruence subgroup $$\left\{\mat{a&b\\c&d}\in SL(2,\ZZ)|a\equiv d\equiv 1(\text{mod }2),b\equiv c\equiv0\text{ or }2(\text{mod }4)\right\}.$$  Therefore,  for any $x,y\in\ZZ$ with $(x,2y)=1$, we can find an element $\xi_{x,y}$ of the form $\mat{x&2y\\**&*}\in\Gamma_{C_3}$.  Under the spin homomorphism $\rho$,  $\xi_{x,y}$ will be mapped to 
$$\mat{x^2-y^2&-1+x^2+y^2&2xy&-2xy+2y^2\\0&1&0&0\\**&*&*&*\\**&*&*&*}.$$
Hence we have the following theorem:
\begin{thm}\label{shiftedquadraticform}
Let $x,y\in\ZZ$ with $(x,2y)=1$, and take any element $\gamma\in{\Aa}$ with the corresponding quadruple
$$\bd{v_{\gamma}}=\gamma(\bd{r})=\langle a_{\gamma},b_{\gamma},c_{\gamma},d_{\gamma}\rangle.$$
Then the number
\al{\label{shiftedqform}\langle\bd{e_1},\xi_{x,y}.\gamma\bd{r}\rangle=A_{\gamma}x^2+2B_{\gamma}xy+C_{\gamma}y^2-b_{\gamma}}
is the curvature of some circle in $\PC$, where
\al{
\nonumber&A_{\gamma}:=a_{\gamma}+b_{\gamma}\\
\nonumber&B_{\gamma}:=c_{\gamma}-d_{\gamma}\\
&C_{\gamma}:=-a_{\gamma}+b_{\gamma}+2d_{\gamma}
}
\end{thm}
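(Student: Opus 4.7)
The plan is to identify $A_\gamma x^2 + 2B_\gamma xy + C_\gamma y^2 - b_\gamma$ with the first entry of the vector $\rho(\xi_{x,y})\gamma\bd{r}$, which will lie in the $\Aa$-orbit of $\bd{r}$ and is therefore a curvature of $\PC$ by \eqref{K}. The whole argument splits into two short steps: confirming that $\rho(\xi_{x,y})\gamma\in\Aa$, and carrying out the dot product that produces the first entry explicitly.

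For the first step, I would invoke the explicit congruence description of $\Gamma_{C_3}$ in \eqref{congruencesubgroup4}: given $(x,2y)=1$, one can complete the first row to an element $\xi_{x,y}=\mat{x&2y\\*&*}\in\Gamma_{C_3}\subset\Gamma$. Since $\Aa=\rho(\Gamma)$, fixing any lift $\tilde\gamma\in\Gamma$ of $\gamma$ one has $\xi_{x,y}\tilde\gamma\in\Gamma$, and hence $\rho(\xi_{x,y})\gamma=\rho(\xi_{x,y}\tilde\gamma)\in\Aa$. Applying this to $\bd{r}$ places $\rho(\xi_{x,y})\gamma\bd{r}$ in $\Aa\cdot\bd{r}$, so by \eqref{K} its first coordinate belongs to $\KK$.

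For the second step, I would take the first row $(x^2-y^2,\,x^2+y^2-1,\,2xy,\,2y^2-2xy)$ of $\rho(\xi_{x,y})$ recorded just above the theorem, and pair it with $\gamma\bd{r}=\langle a_\gamma,b_\gamma,c_\gamma,d_\gamma\rangle^T$. Collecting powers of $x$ and $y$ gives coefficient $a_\gamma+b_\gamma=A_\gamma$ on $x^2$, coefficient $2(c_\gamma-d_\gamma)=2B_\gamma$ on $xy$, coefficient $-a_\gamma+b_\gamma+2d_\gamma=C_\gamma$ on $y^2$, and a lone constant $-b_\gamma$ contributed by the $-1$ in the second entry of the row. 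That is precisely $A_\gamma x^2+2B_\gamma xy+C_\gamma y^2-b_\gamma$, as claimed.

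There is no serious obstacle here; the statement is essentially a clean packaging of earlier structural inputs. The only genuine verification required beforehand is the claimed shape of the first two rows of $\rho(\xi_{x,y})$, which is a direct (if slightly tedious) substitution of a real integer matrix $\mat{x&2y\\*&*}$ into the formula for $\rho$; the fact that the second row comes out to $(0,1,0,0)$ simply reflects the geometric feature that $\Gamma_{C_3}$, via the spin cover, stabilizes one coordinate of the orbit quadruple. All the real structural work---realizing $\Gamma_{C_3}$ as a real subgroup of $\Gamma$, pinning it down as an explicit congruence subgroup of level $4$, and thereby guaranteeing the existence of $\xi_{x,y}$ for every coprime pair $(x,2y)$---has already been done in Proposition 2.1.
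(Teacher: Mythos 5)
Your proof is correct and follows essentially the same route the paper sketches just before the theorem statement: use the congruence description of $\Gamma_{C_3}$ to produce $\xi_{x,y}$ with first row $(x,2y)$, read off the first row of $\rho(\xi_{x,y})$, pair it with $\gamma\bd{r}$ to get the shifted quadratic form, and observe that $\rho(\xi_{x,y})\gamma\in\Aa$ so that by \eqref{K} the first coordinate is a curvature. The only thing you make slightly more explicit than the paper is the one-line verification that $\rho(\xi_{x,y})\gamma$ lands in $\Aa$ (by lifting $\gamma$ to $\Gamma$), which is a harmless elaboration.
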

We can view \eqref{shiftedqform} as a shifted quadratic form $\ff(x,y)$ determined by $\gamma$ with variables $x,y$.  We define 
\aln{&\ff(x,2y)=\langle\bd{e}_1,\xi_{x,y}.\gamma(\bd{r})\rangle\\
&\fff(x,2y)=A_{\gamma}x^2+2B_{\gamma}xy+C_{\gamma}y^2}
Then $\ff=\tilde{\ff}-b_{\gamma}$, and the discriminant of $\fff$ is $-8b_{\gamma}^2$.  \\

Now we set up our ensemble for the circle method.   Let  $N$ be the main growing parameter.  Write $N=TX^2$,  where $T=N^{\frac{1}{200}}$, a small power of $N$, and $X=N^{\frac{199}{400}}$.  We define our ensemble to be a subset of ${\Aa}$ (with multiplicity) of Frobenius norm $\asymp N$.  The ensemble is a product of a subset $\mathfrak{F}$ of norm $T$, and a subset $\mathcal{X}$ of norm $X^2$. We further write $T=T_1T_2$, where $T_2=T_1^{\mathcal{C}}$ and $\mathcal{C}$ is a large number which is determined in Lemma \ref{bk2} .   We define $\FF$ in the following way:

\aln
{\FF=\FF_{T}=\left\{\gamma=\gamma_1\gamma_2:\begin{array}{ccc} &\gamma_1,\gamma_2\in{\Aa}\\&T_1<||\gamma_1||<2T_1\\&T_1<||\gamma_2||<2T_2\\&<\bd{e}_2,\gamma_1\gamma_2\bd{r}>\frac{T}{100}\end{array}\right\}
}

Recall that the Hausdorff dimension of the circle packing $\delta$ is strictly greater than 1.  The size of $\FF$ is $\asymp T^{\delta}$, which can be seen from \cite{KO11}.  The last condition in the definition of $\FF$ implies that $b_\alpha\asymp T$,  which is crucial in our minor arc analysis later.  The subset of norm $X^2$ is the image of some elements of the form
$\mat{x&2y\\**&*}$ in $\Gamma_{C_3}$, with $x,y\asymp X$, under the map $\rho$.

For technical reasons we need to smooth the variables $x$ and $y$.  We fix a smooth, nonnegative function $\psi$ which is supported in $[1,2]$ and $\int_{\RR}\psi(x)\text{d}x=1$.  Our main goal is to study the following representation number

\al{\rn:=\sum_{\ff\in\FF_{T}}\sum_{\sk{x,y\in\ZZ(x,2y)=1}}\psi\left(\frac{x}{X}\right)\psi\left(\frac{2y}{X}\right)\bd{1}_{\{n=\ff(x,2y)\}}
}

via its  Fourier transform:
\al{
\hrn(\theta):=\sum_{\ff\in\FF_T}\sum_{\sk{x,y\in\ZZ(x,2y)=1}}\psi\left(\frac{x}{X}\right)\psi\left(\frac{2y}{X}\right)e(\theta\ff(x,2y))
}
$\mathcal{R}_N$ and $\hrn$ is related by
\aln{
\rn=\int_0^1\hrn(\theta)e(-n\theta)d\theta.
}

Therefore,  $\rn\neq0$ implies $n$ is represented.  Since $\delta>1$, one expects roughly  that each admissible $n$ is represented by $T^{\delta-1}$ times.  One important thing for circle method here is that $T^{\delta-1}$ is a positive power of $N$, so we have enough solutions to play with.

Another technicality is that we replace the condition $(x,2y)=1$ by the M\"obius orthogonal relation:
\begin{equation*}
\sum_{d|n}\mu(n) =
\begin{cases}
1 & \text{if } n=1,\\
0 & \text{if } n>1.
\end{cases}
\end{equation*}
We introduce another parameter $U$ which is a small power of $N$.  It is determined in \eqref{determine}.  We then define the corresponding representation function 
$$\rnu(n):=\sum_{\ff\in\FF_T}\sum_{x,y\in\ZZ}\sum_{\substack{u|(x,2y)\\u<U}}\mu(u)\psi\left(\frac{x}{X}\right)\psi\left(\frac{2y}{X}\right)\bd{1}_{\{n=\ff(x,2y)\}}$$
and its Fourier transform:
$$\hrnu(\theta):=\sum_{\ff\in\FF_T}\sum_{x,y\in\ZZ}\sum_{\substack{u|(x,2y)\\u<U}}\mu(u)\psi\left(\frac{x}{X}\right)\psi\left(\frac{2y}{X}\right)e(\theta\ff(x,2y))$$
The $\ell^1$ norm of  $\mathcal{R}_N$ is $\asymp T^{\delta}X^2$.  We first show that the difference between $\mathcal{R}_N$ and $\rnu$ is small in $\ell^1$, compared to $T^{\delta}X^2$:
\begin{lem}\label{sdm}$$\sum_{n<N}\left|\mathcal{R}_N(n)-\rnu(n)\right|\ll_{\epsilon}  \frac{T^{\delta} X^{2+\epsilon}}{U}.$$
\end{lem}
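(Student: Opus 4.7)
The plan is to use the standard Möbius inclusion-exclusion identity, apply the triangle inequality to the tail, then dominate the tail by a pointwise divisor-type count and finally execute a routine geometric-series estimate.

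First, I would use the identity
\aln{
\mathbf{1}_{(x,2y)=1}-\sum_{\sk{u\mid (x,2y)\\ u<U}}\mu(u) \;=\; \sum_{\sk{u\mid (x,2y)\\ u\geq U}}\mu(u),
}
which follows from $\sum_{u\mid k}\mu(u)=\mathbf{1}_{k=1}$. Substituting this into the definitions of $\rn(n)$ and $\rnu(n)$ gives
\aln{
\rn(n)-\rnu(n)=\sum_{\ff\in\FF_T}\sum_{x,y\in\ZZ}\Bigg(\sum_{\sk{u\mid (x,2y)\\ u\geq U}}\mu(u)\Bigg)\psi\!\left(\tfrac{x}{X}\right)\psi\!\left(\tfrac{2y}{X}\right)\mathbf{1}_{\{n=\ff(x,2y)\}}.
}

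Next, I would take absolute values and apply the triangle inequality, using $|\mu(u)|\leq 1$. Summing in $n<N$ removes the indicator $\mathbf{1}_{\{n=\ff(x,2y)\}}$ (all quantities $\ff(x,2y)$ land in $[0,N)$ for the chosen ensemble, up to an $O(1)$ multiplicative slack), so
\aln{
\sum_{n<N}\bigl|\rn(n)-\rnu(n)\bigr|\;\leq\;\sum_{\ff\in\FF_T}\;\sum_{u\geq U}\;\#\bigl\{(x,y)\in\ZZ^2:u\mid x,\ u\mid 2y,\ x\asymp X,\ y\asymp X\bigr\}.
}

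Then I would carry out the count on the right: for each fixed $u$, the number of admissible $x$ is $O(X/u+1)$ and likewise for $y$, so the inner count is $O\bigl((X/u)^2+1\bigr)$. This count vanishes for $u\gg X$, so
\aln{
\sum_{u\geq U}\#\{\cdots\}\;\ll\;\sum_{U\leq u\leq 2X}\frac{X^2}{u^2}+\sum_{U\leq u\leq 2X}1\;\ll\;\frac{X^2}{U}+X\;\ll\;\frac{X^2}{U},
}
provided $U\leq X$ (which is the regime of interest). Finally, the ensemble bound $|\FF_T|\ll T^{\delta}$ recalled from \cite{KO11} yields
\aln{
\sum_{n<N}\bigl|\rn(n)-\rnu(n)\bigr|\;\ll\; T^{\delta}\cdot\frac{X^2}{U},
}
which is the desired estimate, with the small $\epsilon$ in $X^{2+\epsilon}$ absorbing any loss from divisor estimates or from endpoint terms.

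The argument is essentially a bookkeeping exercise; there is no serious obstacle. The only point requiring a small amount of attention is verifying that $\ff(x,2y)$ indeed stays in the range $[0,N)$ on the support of $\psi(x/X)\psi(2y/X)$, which follows from the Frobenius-norm estimates $\|\gamma\|\asymp T$ for $\gamma\in\FF_T$ and $|x|,|y|\asymp X$, together with the normalization $N=TX^2$; a direct expansion of $\ff$ in terms of the entries of $\gamma$ and of $\xi_{x,y}$ gives the required size bound.
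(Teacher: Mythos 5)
Your proof is correct and follows the same route as the paper: Möbius inclusion-exclusion to isolate the tail $\sum_{u\mid(x,2y),\,u\geq U}\mu(u)$, triangle inequality, and a divisor count, with the ensemble bound $|\FF_T|\ll T^{\delta}$ supplying the factor $T^{\delta}$. The only (minor) departure is that you sum over $u$ first and use the convergent series $\sum_{u\geq U}u^{-2}\ll U^{-1}$, which actually yields the sharper bound $T^{\delta}X^2/U$ without the $X^{\epsilon}$ loss that the paper incurs by fixing $x$ and running over its divisors $u\geq U$ (picking up a factor $d(x)\ll X^{\epsilon}$); either way the stated estimate holds.
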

\begin{proof}
\aln{
\sum_{n<N}|\mathcal{R}_N(n)-\rnu(n)|=&\sum_{n<N}\left|\sum_{\ff\in\FF_{T}}\sum_{(x,2y)=1}\sum_{\substack{u|(x,2y)\\u\geq U}}\mu(u)\psi(\frac{x}{X})\psi(\frac{2y}{X})\bd{1}_{\{n=\ff(x,2y)\}}\right|\\
\leq&\sum_{\ff\in\FF}\sum_{x,y\in\ZZ}\psi(\frac{x}{X})\psi(\frac{2y}{X})\left|\sum_{\substack{u|(x,2y)\\u\geq U}}\mu(u)\right|\\
\ll&\sum_{\ff\in\FF}\sum_{x\ll X}\sum_{\substack{u\geq U\\u|x}}\sum_{\substack{y\ll X\\2y\equiv 0(u)}}1
\ll \frac{T^{\delta} X^{2+\epsilon}}{U} 
}
\end{proof}

Now we decompose [0,1] into ``major" and ``minor' arcs according to the standard Diophantine approximation of real numbers by rationals.  Let $M=TX$ be the parameter controlling the depth of the approximation.  Write $\alpha=\frac{a}{q}+\beta$.  We introduce two parameters $Q_0,K_0$ such that the major arcs corresponds to $q\leq Q_0,\beta\leq\frac{K_0}{N}$.  Both $Q_0$ and $K_0$ are small powers of $N$, and they are determined in \eqref{determine}.\\

Next we introduce the ``hat'' function
\aln{\tf:=\text{min}(1+x,1-x)^{+}}
whose Fourier transform is
\aln{\hat{t}(y)=\left(\frac{\text{sin}(\pi y)}{\pi y}\right)^2.}
From $\tf$, we construct a spike function $\mathfrak{T}$ which captures the major arcs:
\aln{\mathfrak{T}(\theta):=\sum_{q\leq Q_0}\sum_{(r,q)=1}\sum_{m\in\ZZ}\tf\left(\frac{N}{K_0}\left(\theta+m-\frac{a}{q}\right)\right).}
The ``main" term is then defined to be:
\al{\label{mainterm}\mathcal{M}_N(n):=\int_0^1\mathfrak{T}(\theta)\hrn(\theta)e(-n\theta)d\theta}
and the ``error" term
\al{\enn:=\int_0^1(1-\Tf(\theta))\hrn(\theta)e(-n\theta)d\theta.}
We define $\mathcal{M}_N^U(n)$ and $\mathcal{E}_N^U(n)$ in a similar way. \\

Now we explain the general strategy to prove the Theorem {\ref{mainthm}}.
\al{\label{dia}
\begin{array}[c]{ccccc}
\mathcal{R}_N&=&\mathcal{M}_N&+&\mathcal{E}_N\\ 
\vert\scriptstyle{}&&\vert\scriptstyle{}&&\vert\\
\mathcal{R}_N^{U}&=&\mathcal{M}_N^{U}&+&\mathcal{E}_N^{U}
\end{array}
}

$\bd{STRATEGY}:$
\begin{enumerate}
\item The difference between $\mathcal{R}_N$ and $\rnu$ is small in $\ell^1$.  We have shown this in Lemma \ref{sdm}.
\item $\mathcal{M}_N$ is large for each $n$ admissible in the range $(\frac{N}{2}, N)$ (See Theorem \ref{majorarcanal}), and the difference of $\mathcal{M}_N$ and $\mathcal{M}_N^U$ is small in $\ell^2$ (See Lemma \ref{smm}).  This will be done in \S3.2.
\item  Step 2 will imply that the difference between $\mathcal{E}_N^{U}$ and $\mathcal{E}_{N}$ is also small.
\S 3.3 to \S 3.5 will show that the $\mathcal{E}_N^{U}$ is small in $l^2$ (See Theorem \ref{minorthm}), which implies that $\mathcal{E}_N$ is small in $\ell^1$.  This would greatly restrain the size of the set of admissible $n$'s where $\mathcal{R}_N(n)=0$ in $(\frac{N}{2}, N)$, because each term would contribute large to $\mathcal{E}_N$.
\end{enumerate}

\subsection{Major Arc Analysis} 

From \eqref{mainterm},
\al{\label{mnnt}\nonumber\mnn&=\int_0^1\sum_{q<Q_0}\sum_{r(q)}{}^{'}\sum_{m\in\ZZ}\frak{t}\left(\frac{N}{K_0}\left(\theta+m-\frac{r}{q}\right)\right)\hrn(\theta)e(-n\theta)d\theta\\
\nonumber&=\int_{-\infty}^{\infty}\sum_{q<Q_0}\sum_{r(q)}{}^{'}\frak{t}\left(\frac{N}{K_0}\beta\right)\hrn\left(\beta+\frac{r}{q}\right)e\left(-n\left(\beta+\frac{r}{q}\right)\right)d\beta\\
&=\sum_{\sk{x,y\\(x,2y)=1}}\psi\left(\frac{x}{X}\right)\psi\left(\frac{2y}{X}\right)\sum_{q<Q_0}\sum_{r(q)}{}^{'}\sum_{\ff\in\FF}e\left(\frac{r}{q}(\ff(x,2y)-n)\right)\int_{-\infty}^{\infty}\tf\left(\frac{N}{K_0}\beta\right)e(\beta(\ff(x,2y)-n)d\beta
}
Now we cite Lemma 5.3 from \cite{BK13} to deal with the $\FF$ sum in \eqref{mnnt}.
\begin{lem}[Bourgain, Kontorovich]\label{bk1}Let $1<K<T_2^{\frac{1}{10}}$, fix $|\beta|<\frac{K}{N}$, and fix $x,y\asymp X$. Then for any $\gamma_0\in\Gamma$, any $q\geq1$, we have
$$\sum_{\gamma\in\FF\cap\gamma_0\Gamma(q)}e(\beta\ff_{\gamma}(x,2y))=\frac{1}{\Gamma:\Gamma(q)}\sum_{\ff\in\FF}e(\beta\ff_{\gamma}(x,2y))+O(T^{\Theta}K),$$
where $\Theta<\delta$ depends only on the spectral gap for $\Gamma$, and the implied constant does not depend on $q,\gamma_0,x$ or $y$.
\end{lem}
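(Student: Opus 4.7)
The plan is to reduce the statement to a lattice point counting problem on $\Gamma$ with a congruence restriction, and then apply the uniform spectral gap (Theorem \ref{spec}) together with a bisector/automorphic-type equidistribution argument of Vinogradov \cite{Vi13}, following the template of Lemma 5.3 of \cite{BK13}.

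First I would exploit the product structure of the ensemble. Writing $\gamma = \gamma_1\gamma_2$ with $\|\gamma_1\|\asymp T_1$ and $\|\gamma_2\|\asymp T_2$, the sum becomes
\[
\sum_{\gamma_1:\,T_1<\|\gamma_1\|<2T_1}\ \sum_{\substack{\gamma_2\in\gamma_1^{-1}\gamma_0\Gamma(q)\\ T_2<\|\gamma_2\|<2T_2\\ \langle \bd{e}_2,\gamma_1\gamma_2 \bd r\rangle>T/100}} e\!\left(\beta\,\ff_{\gamma_1\gamma_2}(x,2y)\right).
\]
The outer sum over $\gamma_1$ has size $\asymp T_1^{\delta}$ by the orbital counting results (Kontorovich-Oh, cf.\ \cite{KO11}), so it suffices to prove, for each fixed $\gamma_1$, that the inner sum over $\gamma_2$ equidistributes across cosets of $\Gamma(q)$ with a total error of $O(T_2^{\Theta'}K)$ for some $\Theta'<\delta$, and then sum in $\gamma_1$ to produce an overall error of $T_1^\delta T_2^{\Theta'} K\ll T^{\Theta}K$ with $\Theta<\delta$ by choosing $\mathcal{C}$ (hence $T_2$) large enough relative to the spectral gap.

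Next I would handle the exponential weight. Since $\ff_{\gamma_1\gamma_2}(x,2y)$ is a quadratic form in $(x,y)$ whose coefficients are $O(\|\gamma_1\gamma_2\|)=O(T)$ and $x,y\asymp X$, the phase satisfies $|\beta\,\ff_{\gamma_1\gamma_2}(x,2y)|\ll K$ uniformly. Writing the exponential as an integral against its derivative (Abel/partial summation in the variable $\ff$) replaces $e(\beta\ff)$ by a bounded, slowly varying weight whose total variation is $O(K)$. This reduces matters to the unweighted counting problem of estimating
\[
N_{\gamma_0}(T_2; q) := \#\{\gamma_2\in\gamma_1^{-1}\gamma_0\Gamma(q):\ \|\gamma_2\|\asymp T_2,\ \langle \bd{e}_2,\gamma_1\gamma_2\bd r\rangle>T/100\}
\]
and showing
\[
N_{\gamma_0}(T_2;q) = \frac{1}{[\Gamma:\Gamma(q)]}\, N(T_2) + O(T_2^{\Theta'}).
\]

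The heart of the argument is this equidistribution, and it is here that Theorem \ref{spec} is essential. The ball-with-sector count $N(T_2)$ and its congruence refinement are expressible via the Patterson-Sullivan measure and an automorphic kernel on $\Gamma(q)\backslash\mathbb{H}^3$. Spectrally expanding this kernel, the base eigenfunction at eigenvalue $\lambda_0=\delta(2-\delta)$ produces the main term $\frac{N(T_2)}{[\Gamma:\Gamma(q)]}$, while the contribution of all other eigenvalues is controlled by the uniform gap $\lambda_1(q)-\lambda_0\geq\delta_0$ of Theorem \ref{spec}. Converting this $L^2$ bound into a sharp bisector count with exponent $\Theta'<\delta$ is precisely the content of Vinogradov's theorem \cite{Vi13}, applied to the principal congruence tower $\{\Gamma(q)\}$.

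The main obstacle, and the reason Theorem \ref{spec} is indispensable, is obtaining an error exponent $\Theta<\delta$ that is \emph{independent} of $q$; without uniformity the error term would blow up as $q$ grows and the lemma would be useless for the circle method. A secondary technical point is ensuring that the smoothing of the norm-and-bisector region needed to apply the spectral machinery is compatible with the sharp defining inequalities of $\FF$; this is handled by standard $\epsilon$-approximation of indicator functions by smooth cutoffs, absorbing the resulting boundary error into the $T^\Theta K$ term by choosing $\Theta$ marginally larger than the true exponent produced by the gap.
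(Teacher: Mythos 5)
Your outline is correct and follows exactly the route the paper points to: the paper itself gives no proof of this lemma, instead citing Lemma 5.3 of \cite{BK13} and remarking that it "can be modified word by word" using Vinogradov's bisector counting together with the uniform spectral gap of Theorem \ref{spec}, which is precisely the decomposition (split $\gamma=\gamma_1\gamma_2$, Abel-sum away the exponential at a cost of $O(K)$, then apply the congruence-uniform bisector count to the $\gamma_2$-sum) that you describe.
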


Returning to \eqref{mnnt}, we can decompose the set $\FF$ as cosets of $\Gamma(q)$.  Applying Lemma \ref{bk1} and setting $K=K_0$, we have
\al{\nonumber\mnn&=\sum_{\sk{x,y\in\ZZ\\(x,2y)=1}}\psi\left(\frac{x}{X}\right)\psi\left(\frac{2y}{X}\right)\sum_{q<Q_0}\sum_{r(q)}{}^{'}\sum_{\bar{\gamma}\in\Gamma/\Gamma(q)}e\left(\frac{r}{q}(\ff_{\bar{\gamma}}(x,2y)-n)\right)\\
\nonumber&\times\sum_{\sk{\gamma\in\FF\\\gamma\equiv\bar{\gamma}}}\int_{-\infty}^{\infty}\frak{t}\left(\frac{N}{K_0}\beta\right)e(\beta(\ff_{\gamma}(x,2y)-n))d\beta\\
\nonumber&=\sum_{\sk{x,y\in\ZZ\\(x,2y)=1}}\psi\left(\frac{x}{X}\right)\psi\left(\frac{2y}{X}\right)\sum_{q<Q_0}\sum_{r(q)}{}^{'}\sum_{\bar{\gamma}\in\Gamma/\Gamma(q)}e\left(\frac{r}{q}(\ff_{\bar{\gamma}}(x,2y)-n)\right)\\
\nonumber&\times\left(\frac{1}{[\Gamma:\Gamma(q)]}\sum_{\sk{\gamma\in\FF}}\int_{-\infty}^{\infty}\frak{t}\left(\frac{N}{K_0}\beta\right)e(\beta(\ff_{\gamma}(x,2y)-n))d\beta+O\left(\frac{T^{\Theta}K_0^2}{N}\right)\right)\\
\nonumber&=\sum_{r(q)}{}^{'}\psi\left(\frac{x}{X}\right)\psi\left(\frac{2y}{X}\right)\mathfrak{S}_{Q_0}(n)\frak{M}(n)+O\left(\frac{T^{\Theta}X^2K_0^2Q_0^8}{N}\right)\\
&=\sum_{r(q)}{}^{'}\psi\left(\frac{x}{X}\right)\psi\left(\frac{2y}{X}\right)\mathfrak{S}_{Q_0}(n)\frak{M}(n)+O\left(N^{-\eta}\right)
}
where $\eta_1>0$, as can be seen from \eqref{determine}, and
\al{\label{singularq0}
\frak{S}_{Q_0}(n)=\frak{S}_{Q_0;x,y}(n):&=\sum_{q<Q_0}\sum_{(r,q)=1}\frac{1}{[\Gamma:\Gamma(q)]}\sum_{\bar{\gamma}\in\Gamma/\Gamma(q)}e\left(\frac{r}{q}(\ff_{\bar{\gamma}}(x,2y))-n\right)\\
&=\sum_{q<Q_0}\frac{1}{[\Gamma/\Gamma(q)]}\sum_{\bar{\gamma}\in\Gamma/\Gamma(q)}c_q(\ff_{\bar{\gamma}}(x,2y)-n)
} and 
\al{\nonumber
\frak{M}(n):=\frak{M}_{x,y}(n)&:=\sum_{\sk{\gamma\in\FF}}\int_{-\infty}^{\infty}\frak{t}\left(\frac{N}{K_0}\beta\right)e(\beta(\ff_{\gamma}(x,2y)-n))d\beta\\
&=\frac{K_0}{N}\sum_{\sk{\gamma\in\FF}}\hat{t}\left(\frac{K_0}{N}(\ff(x,2y)-n)\right)
}\\

The function $c_q(n)$ in \eqref{singularq0} is the classical Ramanujan's sum, defined by
$$c_q(n)=\sum_{a(q)}{}^{'}e\left(\frac{an}{q}\right),$$ 
$c_q(n)$ is multiplicative with respect to $q$, and
\aln{c_{p^k}(n)=\begin{cases}0&\text{if } p^m||n,m\leq k-2,\\
-p^{k-1}& \text{if } p^{k-1}||n,\\
p^{k-1}(p-1)& \text{if } p^k|n.
\end{cases}}

Now $\frak{M}(n)\gg \frac{T^{\delta}}{N}$ for $\frac{N}{2}<n<N$, which can be seen from the following lemma by Lemma 5.4 in \cite{BK13}.  We record it here:
\begin{lem}[Bourgain, Kontorovich]\label{bkcao} Fix $N/2<n<N,1<K\leq T_2^{\frac{1}{10}}$, and $x,y\asymp X$.  Then 
$$\sum_{\gamma\in\FF}\bd{1}_{\{|\ff_{\gamma}(x,2y)-n|<\frac{N}{K}\}}\gg\frac{T^{\delta}}{K}+T^{\Theta},$$
where $\Theta<\delta$ depends only on the spectral gap for $\Gamma$.  The implied constant is independent of $x,y$ and $n$.
\end{lem}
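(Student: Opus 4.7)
The plan is to realize the count as a bisector lattice-point count in the Kleinian group $\Gamma$ and then invoke Vinogradov's effective equidistribution theorem \cite{Vi13}. Fix $x,y\asymp X$ with $(x,2y)=1$. By Theorem~\ref{shiftedquadraticform} we have
$$\ff_{\gamma}(x,2y)=\langle \bd{e}_1,\xi_{x,y}\gamma\bd{r}\rangle,$$
so the slab condition $|\ff_{\gamma}(x,2y)-n|<N/K$ translates into a linear constraint of relative width $1/K$ on the four-vector $\xi_{x,y}\gamma\bd{r}\in\RR^4$. Combined with the norm condition $\|\gamma\|\asymp T$ and the auxiliary condition $\langle\bd{e}_2,\gamma\bd{r}\rangle\gg T$ built into $\FF$ (which forces $b_{\gamma}\asymp T$), these three requirements carve out a sector-like region in $\Gamma$ of exactly the shape to which Vinogradov's bisector counting theorem applies.

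Next I would apply that theorem to the region just described. Its main term is the integral of the Patterson-Sullivan density against the characteristic function of the region, and its error is a power saving of the form $T^{\Theta}$ with $\Theta<\delta$ depending only on the uniform spectral gap of Theorem~\ref{spec}. Of the three constraints, the Cartan ball of radius $\asymp T$ contributes the expected factor $T^{\delta}$ (using $\delta>1$ from \cite{La67} together with Patterson-Sullivan/Lax-Phillips theory), the directional condition $b_{\gamma}\asymp T$ costs only a constant, and the linear slab of relative width $1/K$ contributes a further factor $1/K$. Altogether the main term is of order $T^{\delta}/K$, while the error term gives the additive floor $T^{\Theta}$, yielding the claimed bound.

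The main obstacle is verifying that the Patterson-Sullivan mass of the bisector region genuinely has order $T^{\delta}/K$ uniformly in $x,y\asymp X$ and $n\in(N/2,N)$. Concretely, one must show that the linear functional $\langle \bd{e}_1,\xi_{x,y}\,\cdot\,\rangle$ never degenerates on the limit set and that the resulting slab always captures a positive proportion of the PS mass, with constants independent of $(x,y,n)$. The hypothesis $K\leq T_2^{1/10}$ is calibrated so that the slab remains wide enough for Vinogradov's equidistribution to dominate the spectral error in the main regime; in the opposite regime the statement reduces to the unconditional floor $T^{\Theta}$, which is obtained by retaining only a single coarse bisector slice from the full orbit count. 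Making the uniformity effective and tracking the constants through the Iwasawa/Cartan decomposition is the technical heart of the argument, and is where the full strength of Theorem~\ref{spec} (rather than just a gap over squarefree moduli) is used.
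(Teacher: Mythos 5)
The paper itself does not prove this lemma; it is imported verbatim as Lemma~5.4 of Bourgain--Kontorovich's Apollonian paper \cite{BK13}, with the remark that their Lemmas 5.2--5.4 ``can be modified word by word to fit our setting.'' So the relevant comparison is to the strategy behind that lemma. Your high-level outline is aimed in the right direction: Vinogradov's effective bisector count \cite{Vi13}, driven by the uniform spectral gap of Theorem~\ref{spec}, is indeed the engine, and you correctly note that $\delta>1$ and Patterson--Sullivan/Lax--Phillips theory supply the $T^{\delta}$ main-term scaling. You also correctly flag the genuinely delicate point, namely that one must show the slab of relative width $1/K$ captures a $\asymp 1/K$ proportion of PS mass uniformly in $x,y,n$; that is not a decoration but the content of the lemma.

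There is, however, a concrete structural gap. Your sketch treats $\FF$ as though it were a single Cartan ball of radius $\asymp T$ and applies bisector counting once. But $\FF$ is, by construction, a product set $\{\gamma_1\gamma_2\}$ with $\|\gamma_1\|\asymp T_1$ and $\|\gamma_2\|\asymp T_2=T_1^{\mathcal{C}}$, and the hypothesis $K\le T_2^{1/10}$ is a giveaway that the argument must be run over the $\gamma_2$-factor for each fixed $\gamma_1$: one fixes $\gamma_1$, counts $\gamma_2$ in the slab with main term $\asymp T_2^{\delta}/K$ and spectral error $\ll T_2^{\Theta}$, and then sums over the $\asymp T_1^{\delta}$ choices of $\gamma_1$, with $\mathcal{C}$ chosen large so that the accumulated error stays $\ll T^{\Theta}$. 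Applied directly to the full ball of radius $T$, your version would need $K\ll T^{\delta-\Theta}$ to beat the error, and since the spectral gap gives no usable control on $\delta-\Theta$, you cannot conclude; the two-scale decomposition of $\FF$ is precisely what makes the comparison work for $K$ up to $T_2^{1/10}$. You should reorganize the argument around this product structure, and also note that the unconditional floor $T^{\Theta}$ is not a separate ``coarse bisector slice'' construction but falls out of the same counting as the residual error term.
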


Now we are at a position to analyze the non-Archimedean part $\frak{S}_{Q_0}$.  We push $\frak{S}_{Q_0}(n)$ to infinity, and define
\aln{\frak{S}(n):=\sum_{q=1}^{\infty}\frac{1}{[\Gamma:\Gamma(q)]}\sum_{\bar{\gamma}\in\Gamma/\Gamma(q)}c_q(\ff_{\bar{\gamma}}(x,2y)-n)\\
=\sum_{q=1}^{\infty}\sum_{a\in\ZZ/q\ZZ}\tau_q(a)c_q(a-n):=\sum_{q=1}^{\infty}B_q(n),}

where \aln{
\tau_q(a)=\frac{\#\{\langle u,v,w\rangle(\text{mod }q)|\langle a,u,v,w\rangle\in\mathcal{P}\}}{\#\{\langle x,u,v,w\rangle(\text{mod }q)|\langle x,u,v,w\rangle\in\mathcal{P}\}}
}

From Theorem \ref{localobstruction} we know that $\tau_q(n)$ is multiplicative in the $q$ variable, and so is $B_q(n)$.  Therefore,  we can formaly write 
$$\frak{S}(n)=\prod_{p}(1+B_p(n)+B_{p^2}(n)+\hdots)$$ 

For $p\geq 3$, by Theorem \ref{localobstruction},  we can show that
\aln{B_{p}(n)=\begin{cases}\frac{-1-p(\frac{-2}{p})}{p^2+(1+(\frac{-2}{p}))p+1}&\text{if}p|n,\\
\frac{p(\frac{-2}{p})+1}{p^3+p(p-1)(\frac{-2}{p})-1}&\text{if}p\nmid n,\end{cases}}
and $B_{p^k}=0$ for $k\geq 2$.  For $p=2$, we have $B_{2^m}=0$ for $m\geq 4$ and
\aln{1+B_2(n)+B_4(n)+B_8(n)=\begin{cases}8&\text{if }n\equiv\kappa_1(\text{mod }8)\\0&\text{otherwise.}\end{cases}}

Thus we see that $\frak{S}_{Q_0}$ is a non-negative function which is non-zero if and only if $n\equiv \kappa_1(\text{mod }8)$, which matches exactly the local obstruction described in Theorem \ref{localobstruction}.  For such admissible $n$'s,  $\frak{S}_{Q_0}$ satisfies $N^{-\epsilon}\ll_{\epsilon}\frak{S}_{Q_0}(n)\ll_{\epsilon}N^{\epsilon}$.\\

To analyze $\rnu$ we need to extend the definition of $\frak{S}_{x,y}(n)$ restricted to $(x,2y)=1$ to all pair of integers $x,y$. If $(x,2y)=u>1$, the same calculation shows that $\frak{S}_{Q_0;x,y}(n)$ has the same local factor for $p\neq 2$, and $B_{2^m}=0$ for $m\geq 4$.  Therefore,  $\frak{S}_{x,y}(n)\ll_{\epsilon}N^{\epsilon}$ for any $x,y\in\ZZ$.\\

The difference between $\frak{S}$ and $\frak{S}_{Q_0}$ is small.  In fact, we have
\al{\label{dss}|\frak{S}(n)-\frak{S}_{Q_0;x,y}(n)|\leq\sum_{q\geq Q_0}|B_q(n)|\leq\sum_{q_1:n}|B_{q_1}(n)|    \sum_{\sk{(q_2,q_1)=1\\q_1q_2\geq Q_0}}|B_{q_2}(n)|
}
Here we write $q=q_1q_2$, where $q_1:n$ means that $q_1$ is the product of all primes dividing $n$.   We also know that $B_q(n)$ as a function of $q$ is supported on (almost) square-free numbers (as can be see by the previous paragraphs), we have 
\aln{\eqref{dss}\ll\sum_{q_1:n}\frac{1}{q_1}\frac{q_1}{Q_0}\ll \frac{2^{w(n)}}{Q_0} 
}
where $w(n)$ denotes the number of primes dividing $n$.   Therefore, we conclude that if $n$ is admissible, then $N^{-\epsilon}\ll \mathfrak{S}_{Q_0}(n)\ll_{\epsilon}N^{\epsilon}$.   In summary, we have

\begin{thm} \label{majorarcanal}For $\frac{N}{2}<n<N$, there exists a function $\frak{S}_{Q_0}(n)$ such that if $n$ is admissible, then
\aln{\mnn \gg \frak{S}_{Q_0}(n)T^{\delta-1},}

where \aln{N^{-\epsilon}\ll_{\epsilon}\frak{S}_{Q_0}(n)\ll_{\epsilon}N^{\epsilon}.}

\end{thm}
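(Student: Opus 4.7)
The plan is to assemble three ingredients already in place: the coset factorization of $\mnn$ afforded by Lemma \ref{bk1}, the archimedean lower bound Lemma \ref{bkcao}, and the singular-series analysis carried out immediately above the theorem.

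First, starting from \eqref{mnnt} and decomposing the $\FF$-sum by cosets of $\Gamma(q)$, I apply Lemma \ref{bk1} with $K = K_0$ to arrive at
\aln{
\mnn = \sum_{\substack{x,y \in \ZZ\\ (x,2y)=1}}\psi\!\left(\frac{x}{X}\right)\psi\!\left(\frac{2y}{X}\right)\mathfrak{S}_{Q_0;x,y}(n)\,\mathfrak{M}_{x,y}(n) + O(N^{-\eta_1}),
}
with $\eta_1>0$ given the choices of $Q_0, K_0$ as sufficiently small powers of $N$ in \eqref{determine} (the error accumulates as $X^2\, K_0^2\, Q_0^{O(1)}\, T^{\Theta}/N$).

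Next I bound each factor of the main term from below uniformly in $x,y \asymp X$. For the archimedean piece, $\hat{t}$ is nonnegative and bounded below by a positive constant on $[-\tfrac12,\tfrac12]$, so Lemma \ref{bkcao} applied with $K = K_0$ gives
\aln{
\mathfrak{M}_{x,y}(n) \geq \frac{K_0}{N}\sum_{\substack{\gamma\in\FF\\ |\ff_{\gamma}(x,2y)-n| < N/(2K_0)}}\hat{t}\!\left(\tfrac{K_0}{N}(\ff_{\gamma}(x,2y)-n)\right) \gg \frac{T^{\delta}}{N}
}
whenever $N/2 < n < N$. For the non-archimedean piece, I define $\mathfrak{S}_{Q_0}(n) := \inf_{(x,2y)=1}\mathfrak{S}_{Q_0;x,y}(n)$; the multiplicativity of $B_q(n)$ (via Theorem \ref{localobstruction}), the explicit formulas for $B_{p^k}$ with $p\geq 3$, and the value of $1+B_2+B_4+B_8$ computed just above the theorem imply that $\mathfrak{S}_{Q_0;x,y}(n)$ is nonnegative uniformly in $x,y$ and satisfies $N^{-\epsilon}\ll_{\epsilon} \mathfrak{S}_{Q_0;x,y}(n) \ll_{\epsilon} N^{\epsilon}$ whenever $n$ is admissible; the same bounds then hold for $\mathfrak{S}_{Q_0}(n)$.

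Substituting both lower bounds into the main term and noting that the number of smoothed coprime pairs $(x,y)$ with $x,y \asymp X$ is $\asymp X^2$, together with the relation $X^2 = N/T$, I obtain
\aln{
\mnn \gg X^2 \cdot \mathfrak{S}_{Q_0}(n) \cdot \frac{T^{\delta}}{N} + O(N^{-\eta_1}) = \mathfrak{S}_{Q_0}(n)\, T^{\delta-1} + O(N^{-\eta_1}).
}
Since $\delta > 1$, for admissible $n$ the main term is at least $N^{-\epsilon}\, T^{\delta-1}$, a fixed positive power of $N$ for $\epsilon$ small, and hence dominates the error. The main obstacle is nothing conceptually new: it is parameter bookkeeping, ensuring that $Q_0$ and $K_0$ are small enough powers of $N$ so that the accumulated Lemma \ref{bk1} error $X^2 K_0^2 Q_0^{O(1)} T^{\Theta}/N$ is a negative power of $N$, while still respecting $K_0 \leq T_2^{1/10}$ from Lemma \ref{bkcao}. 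These constraints are simultaneously satisfied by the choices fixed once and for all in \eqref{determine}.
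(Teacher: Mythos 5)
Your proof is correct and follows essentially the same route as the paper's: unfold $\mnn$ via Lemma \ref{bk1} into the $\mathfrak{S}_{Q_0;x,y}\cdot\mathfrak{M}_{x,y}$ decomposition, lower-bound the archimedean factor by Lemma \ref{bkcao} using the positivity of $\hat{\mathfrak{t}}$, and control the singular series via its multiplicative structure and tail comparison with the completed $\mathfrak{S}(n)$. The only (harmless) cosmetic difference is that you explicitly take $\mathfrak{S}_{Q_0}(n):=\inf_{(x,2y)=1}\mathfrak{S}_{Q_0;x,y}(n)$, which cleanly handles the $x,y$-dependence that the paper treats implicitly.
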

 
Next we show that the difference of $\mn$ and $\mnu$ is small in $\ell^1$.
\begin{lem}\label{smm}
$$\sum_{\frac{N}{2}<n<N}|\mn(n)-\mnu(n)|\ll_{\epsilon}\frac{N^{\epsilon}X^2T^{\delta}}{U}+\frac{T^{\Theta}X^2K_0^2Q_0^2}{U},$$
where $\Theta$ is the same as in Lemma \ref{bk1}.
\end{lem}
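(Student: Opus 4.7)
The plan is to parallel the major-arc derivation of \S3.2 essentially verbatim, but applied to the difference $\mn-\mnu$ in place of $\mn$ itself. The tail of the M\"obius inclusion-exclusion will play the structural role that the coprimality condition $(x,2y)=1$ plays in the original derivation.

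First I would unfold the definitions to write
\aln{
\mn(n)-\mnu(n)=\sum_{x,y\in\ZZ}R(x,y)\,\psi\!\left(\tfrac{x}{X}\right)\psi\!\left(\tfrac{2y}{X}\right)\int_{0}^{1}\Tf(\theta)\sum_{\ff\in\FF}e\bigl(\theta(\ff(x,2y)-n)\bigr)\,d\theta,
}
where
\aln{
R(x,y):=\mathbf{1}_{(x,2y)=1}-\sum_{\sk{u\mid(x,2y)\\u<U}}\mu(u)=\sum_{\sk{u\mid(x,2y)\\u\geq U}}\mu(u).
}
The crucial feature of $R$ is that it vanishes when $(x,2y)<U$ and obeys $|R(x,y)|\le\tau((x,2y))$, so exactly the argument used in Lemma~\ref{sdm} yields
\aln{
\sum_{x,y\asymp X}|R(x,y)|\,\psi\!\left(\tfrac{x}{X}\right)\psi\!\left(\tfrac{2y}{X}\right)\ll_{\epsilon}\frac{X^{2+\epsilon}}{U}.
}

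Next I would insert $\Tf(\theta)=\sum_{q\le Q_0}\sum_{(r,q)=1}\sum_{m}\tf(\tfrac{N}{K_0}(\theta+m-r/q))$, unfold to the $\beta$-integral, split the $\ff$-sum by cosets of $\Gamma(q)$, and invoke Lemma~\ref{bk1} with $K=K_0$ on each coset. The same algebra leading to \eqref{mnnt}--\eqref{mainterm} produces
\aln{
\mn(n)-\mnu(n)=\sum_{x,y}R(x,y)\psi\!\left(\tfrac{x}{X}\right)\psi\!\left(\tfrac{2y}{X}\right)\mathfrak{S}_{Q_0;x,y}(n)\,\mathfrak{M}_{x,y}(n)+\mathcal{E}(n),
}
with $\mathfrak{S}_{Q_0;x,y}$ and $\mathfrak{M}_{x,y}$ the singular series and smooth Archimedean factors from \S3.2, and $\mathcal{E}(n)$ the accumulated per-coset Lemma~\ref{bk1} errors summed over $(q,r,\bar\gamma,x,y)$.

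To control the main piece I would sum absolutely in $n$, using $|\mathfrak{S}_{Q_0;x,y}(n)|\ll_{\epsilon}N^{\epsilon}$ (valid for \emph{any} $x,y\in\ZZ$, as remarked just above \eqref{dss}) together with $\sum_{n}\mathfrak{M}_{x,y}(n)\ll T^{\delta}$, the latter being a Riemann-sum estimate using $\hat t\ge 0$ and the Hausdorff count $|\FF|\ll T^{\delta}$. Combining with the $R$-estimate above,
\aln{
\sum_{n}\Bigl|\sum_{x,y}R(x,y)\psi\psi\,\mathfrak{S}_{Q_0;x,y}(n)\mathfrak{M}_{x,y}(n)\Bigr|\ll_{\epsilon}\frac{N^{\epsilon}X^{2}T^{\delta}}{U},
}
which is exactly the first term in the target bound.

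For $\mathcal{E}(n)$, Lemma~\ref{bk1} contributes $O(T^{\Theta}K_0)$ per coset, and weighting by $\tf(\tfrac{N}{K_0}\beta)$ and integrating in $\beta$ adds the factor $K_0/N$. The $(q,r,\bar\gamma)$-bookkeeping is identical to that of the major-arc derivation (where the analogous error was absorbed into $O(N^{-\eta})$ immediately after \eqref{mnnt}); carrying it out produces a $Q_0^{2}$-factor, after which factoring out $X^{2+\epsilon}/U$ from the $(x,y)$-sum via the $R$-support and summing over the $O(N)$ admissible values of $n$ yields $O(T^{\Theta}X^{2}K_0^{2}Q_0^{2}/U)$. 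The only nonroutine step is the verification of the precise $Q_0^{2}$ exponent in the error accumulation, which is just the same accounting done once in the major-arc analysis; all other ingredients are direct combinations of estimates already recorded.
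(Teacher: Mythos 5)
The proposal is correct and follows essentially the same route as the paper's proof: both unfold the difference via the Möbius tail, apply Lemma~\ref{bk1} coset by coset with $K=K_0$, bound the singular series by $N^{\epsilon}$ (valid for arbitrary $(x,y)$), bound the Archimedean $n$-sum by $T^{\delta}$, and extract the $1/U$ saving from the tail. The only (cosmetic) difference is that you package the Möbius tail into a single weight $R(x,y)$ supported on $(x,2y)\geq U$, whereas the paper carries the $u$-sum explicitly and consequently has to split into odd/even $u$ — your parametrization sidesteps that split but the underlying estimates are identical.
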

\begin{proof}
Going in the same way as \eqref{mnnt} to unfold $\mnu(n)$, we have
\aln{\label{mdiv}
\nonumber\mn(n)-\mnu(n)&=\sum_{\sk{u\geq U\\u \text{ odd}}}\mu(u)\sum_{x,y\in\ZZ}\psi\left(\frac{xu}{X}\right)\psi\left(\frac{2yu}{X}\right)\sum_{q<Q_0}\sum_{r(q)}{}^{'}\sum_{\bar{\gamma}\in\Gamma/\Gamma(q)}e\left(\frac{r}{q}(\ff_{\bar{\gamma}}(xu,2yu)-n)\right)\\
\nonumber&\times\sum_{\sk{\gamma\in\FF\\\gamma\equiv\bar{\gamma}(\text{mod }\Gamma(q))}}\int_{-\infty}^{\infty}\tf\left(\frac{N}{K_0}\theta\right)e(\theta(\ff_{\gamma}(xu,2yu)-n))d\theta\\
\nonumber&+\sum_{\sk{u\geq U\\u \text{ even}}}\mu(u)\sum_{x,y\in\ZZ}\psi\left(\frac{xu}{X}\right)\psi\left(\frac{yu}{X}\right)\sum_{q<Q_0}\sum_{r(q)}{}^{'}\sum_{\bar{\gamma}\in\Gamma/\Gamma(q)}e\left(\frac{r}{q}(\ff_{\bar{\gamma}}(xu,yu)-n)\right)\\
\nonumber&\times\sum_{\sk{\gamma\in\FF\\\gamma\equiv\bar{\gamma}(\text{mod }\Gamma(q))}}\int_{-\infty}^{\infty}\tf\left(\frac{N}{K_0}\theta\right)e(\theta(\ff_{\gamma}(xu,yu)-n))d\theta\\
\nonumber&=\sum_{\sk{u\geq U\\u \text{ odd}}}\mu(u)\sum_{x,y\in\ZZ}\psi\left(\frac{xu}{X}\right)\psi\left(\frac{2yu}{X}\right)\frak{S}_{Q_0,(xu,2yu)}(n)\times \sum_{\gamma\in\FF }\frac{K_0}{N}\hat{\tf}\left(\frac{K_0}{N}(\ff_{\gamma}(xu,2yu)-n)\right)\\
\nonumber&+\sum_{\sk{u\geq U\\u \text{ even}}}\mu(u)\sum_{x,y\in\ZZ}\psi\left(\frac{xu}{X}\right)\psi\left(\frac{yu}{X}\right)\frak{S}_{Q_0,(xu,yu)}(n)\times \sum_{\gamma\in\FF }\frac{K_0}{N}\hat{\tf}\left(\frac{K_0}{N}(\ff_{\gamma}(xu,yu)-n)\right)\\&+O\left(\frac{T^{\Theta}X^2K_0^2Q_0^2}{NU}\right)
}
Therefore,
\aln{&\sum_{\frac{N}{2}<n<N}|\mn(n)-\mnu(n)|\\
&\ll \sum_{\sk{u>U\\u\text{ odd}}}\sum_{x,y\in\ZZ}\psi\left(\frac{xu}{X}\right)\psi\left(\frac{2yu}{X}\right)\frac{K_0}{N}\sum_{\ff\in\FF}\sum_{\frac{N}{2}<n<N}\frak{S}_{Q_0}(n)\hat{\tf}\left(\frac{K_0}{N}(\ff(xu,2yu)-n)\right)\\
&+ \sum_{\sk{u>U\\u\text{ even}}}\sum_{x,y\in\ZZ}\psi\left(\frac{xu}{X}\right)\psi\left(\frac{yu}{X}\right)\frac{K_0}{N}\sum_{\ff\in\FF}\sum_{\frac{N}{2}<n<N}\frak{S}_{Q_0}(n)\hat{\tf}\left(\frac{K_0}{N}(\ff(xu,yu)-n)\right)+\frac{T^{\Theta}X^2K_0^2Q_0^2}{U}\\
&\ll_{\epsilon}\frac{X^2T^{\delta}N^{\epsilon}}{U}+\frac{T^{\Theta}X^2K_0^2Q_0^2}{U}
}
\end{proof}

In light of \eqref{determine}, we have 
\aln{
\sum_{\frac{N}{2}<n<N}|\mn(n)-\mnu(n)|\llet T^{\delta}X^2N^{-\eta}
}

\subsection{Minor Arc Analysis I}

The rest few sections of the paper is dedicated to proving Theorem \ref{minorthm}, which shows that $(1-\Tf(\theta))\hrnu$ is small in $L^2$.  By Plancherel formula this will imply that $\enu$ is small in $\ell^2$, fulfilling Step 3 of our strategy.
\begin{thm} \label{minorthm}$$\int_{0}^{1}\left|(1-\Tf(\theta))\rnu(\theta)\right|^2d\theta \ll NT^{2{(\delta-1)}}N^{-\eta} $$
\end{thm}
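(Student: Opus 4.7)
The plan is to bound $\int_0^1 |(1-\Tf(\theta))\hrnu(\theta)|^2 \, d\theta$ by a threefold decomposition of the minor arcs, following exactly the template of \cite{BK13}. By Dirichlet, every $\theta\in[0,1]$ admits $\theta=a/q+\beta$ with $(a,q)=1$, $q\le M=TX$, and $|\beta|\le 1/(qM)$; the support of $1-\Tf$ in this coordinate is covered by three ranges: (i) $q\le Q_0$ with $K_0/N<|\beta|\le 1/(qM)$ (the ``near-major'' range); (ii) $Q_0<q\le X$ (moderate denominators); (iii) $X<q\le M$ (large denominators). These are handled in \S 3.3, \S 3.4, \S 3.5 respectively. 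For each range I would show that its contribution to the $L^2$ integral is $\ll NT^{2(\delta-1)}N^{-\eta}$, so that summing the three bounds yields the theorem.

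The first move is to open $|\hrnu(\theta)|^2$ as a double sum over $(\ff,\ffp)\in\FF\times\FF$ and $(x,y,x',y')$, and to apply Poisson summation in the four smoothed variables after pulling out the $e(\theta\ff(x,2y))$ and $e(-\theta\ffp(x',2y'))$ factors. Writing $\theta=a/q+\beta$, Poisson converts the inner sum into a sum over dual frequencies $(\xi,\zeta,\xi',\zeta')$ weighted by $\widehat\psi$ (hence rapidly decaying once the frequencies exceed $\asymp qN^{\epsilon}/X$) and a complete Kloosterman-type exponential sum $\SM$ in residues modulo $q$, as encoded in Lemmas 5.2--5.4 of \cite{BK13}. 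The $\FF$-sum with the congruence condition inherited from the completion is then controlled via Lemma \ref{bk1}, which is where the spectral gap of Theorem \ref{spec} enters decisively.

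In range (i), since $q\le Q_0$, Lemma \ref{bk1} equidistributes $\FF$ in cosets of $\Gamma(q)$ with error $T^\Theta K_0$, and the real savings come from the Archimedean factor: on $|\beta|>K_0/N$, stationary phase/oscillatory integral estimates for the $\beta$-integral against $\widehat\psi$ give a power gain in $K_0$. In range (ii) the denominators still satisfy $q\le X$ so Poisson is productive; the resulting incomplete Kloosterman sums are bounded by the elementary $q^{3/4+\epsilon}$ estimate mentioned in the introduction, and the $\FF\times\FF$ count is again controlled by Lemma \ref{bk1}. In range (iii), $q>X$ means Poisson summation is no longer beneficial, so one instead applies Cauchy-Schwarz to the $(\ff,\ffp)$ sum and bounds the resulting diagonal/off-diagonal terms by counting lattice points of $\FF$ satisfying congruences, once more via the spectral gap.

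The main obstacle will be range (ii), where the power saving has to survive both the $q^{3/4}$ loss from the Kloosterman bound and the full $|\FF|^2\asymp T^{2\delta}$ size of the diagonal, leaving a narrow margin. A careful choice of the parameters $T=T_1T_2$, $T_2=T_1^{\mathcal{C}}$, $Q_0$, $K_0$, and $U$, all taken to be small powers $N^{c\eta}$, is required so that (a) the bisector/spectral-gap counting of Lemma \ref{bk1} applies ($K_0<T_2^{1/10}$), (b) the major-arc coset equidistribution error $T^\Theta K_0^2 Q_0^{O(1)}$ is absorbed, and (c) each of ranges (i)--(iii) yields the same quantitative saving $N^{-\eta}$. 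Since $\Gamma$ has the same product-of-congruence-subgroups structure (Proposition \ref{gammaproduct}) and the same spectral gap (Theorem \ref{spec}) as the Apollonian group used in \cite{BK13}, the three minor arc Lemmas 5.2--5.4 of \cite{BK13} transfer essentially verbatim to our setting, and the parameter balancing can be carried out as there.
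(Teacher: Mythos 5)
Your high-level decomposition matches the paper: the integral is split into $\mathcal{I}_1$ ($q<Q_0$), $\mathcal{I}_2$ ($Q_0\le q<X$), and $\mathcal{I}_3$ ($X\le q\le M$), handled in \S 3.3--3.5, and you correctly identify Poisson summation, the elementary $q^{3/4+\epsilon}$ Kloosterman estimate, and spectral-gap counting as the main drivers. But there are gaps in the details, especially in range (iii), that your proposal glosses over and that the paper must work to close.

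First, a bookkeeping issue: the $\FF$-count in the \emph{minor} arcs is controlled by Lemma \ref{bk2} (the congruence counting statement $\sum_{\gamma\in\FF}\bd{1}_{\langle e_1,\gamma\bd{r}\rangle\equiv r\,(q)}\ll T^\delta/q^{\eta_0}$), not by Lemma \ref{bk1}, which is the coset equidistribution used for the major-arc main term. Also, Cauchy--Schwarz in both $\mathcal{I}_2$ and $\mathcal{I}_3$ is applied to the $u$ (M\"obius) variable, not to the $(\ff,\ffp)$ sum.

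Second, and more seriously, your treatment of range (iii) is missing the ideas that actually make the estimate close. The paper does not abandon the Fourier-dual expansion there; it rewrites $e_{q_0}(ru_0\fff(x,y))$ via its finite Fourier series and reduces to the same $\SM_\ff$ sums, with the dual variables supported on $m,n\ll uq_0/X$ after non-stationary phase. The crux is then a further dichotomy on whether $b_\ff=b_{\ffp}$ and whether $\ff(m,-n)=\ffp(m',-n')$. When $b_\ff=b_{\ffp}$ but the quadratic form values differ, the raw $q^{3/4}$ Kloosterman bound is insufficient, and the paper proves a strictly better estimate (Lemma \ref{betterbound}), exploiting the cancellation $\bar r$-linear term after collecting the two quadratic completions. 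When both coincide, one is counting representations of a fixed integer by quadratic forms of a fixed discriminant, which requires a genuinely new arithmetic input: Lemma \ref{innerdsum} (a $d(\cdot)$-type bound on class/representation multiplicities for forms of discriminant $-8b_\ff^2$) together with Lemma \ref{finalingrediant} (a lattice-point count for $\ff(m,-n)\equiv0\,(d)$). Without these the diagonal contribution from $T^{2\delta}$-many pairs $(\ff,\ffp)$ cannot be controlled. Saying ``Lemmas 5.2--5.4 of \cite{BK13} transfer verbatim'' elides precisely this part of the argument, which depends on the specific discriminant $-8b_\ff^2$ and the arithmetic of $\ZZ[\sqrt{2}i]$ in our setting.
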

We divide the integral into three parts.  \al{
&\mathcal{I}_1=\sum_{q<Q_0}\sum_{r(q)}{}^{'}\int_{\frac{r}{q}-\frac{1}{qM}}^{\frac{r}{q}+\frac{1}{qM}}|(1-\Tf(\theta))\hrnu(\theta)|^2d\theta\\
&\mathcal{I}_2=\sum_{Q_0\leq q<X}\sum_{r(q)}{}^{'}\int_{\frac{r}{q}-\frac{1}{qM}}^{\frac{r}{q}+\frac{1}{qM}}|(1-\Tf(\theta))\hrnu(\theta)|^2d\theta\\
&\label{i3}\mathcal{I}_3=\sum_{X\leq Q\leq M}\sum_{r(q)}{}^{'}\int_{\frac{r}{q}-\frac{1}{qM}}^{\frac{r}{q}+\frac{1}{qM}}|(1-\Tf(\theta))\hrnu(\theta)|^2d\theta
} 
corresponding to different ranges of $q$.   We will show that $\mci_1,\mci_2,\mci_3$ are bounded by the same bound as in Theorem \ref{minorthm}, which immediately implies Theorem \ref{minorthm}.  This section is to deal with $\mci_1$, and the next two sections deal with $\mci_2,\mci_3$ respectively.\\
 
 First we re-order the sum in $\hrnu$ according to the $u$ variable:
 \al{\label{ruf}
 \nonumber \hrnu(\theta)&=\sum_{x,y\in\ZZ}\sum_{\ff\in\FF}\sum_{u<U}\mu(u)\psi\left(\frac{x}{X}\right)\psi\left(\frac{2y}{X}\right)e(\theta\ff(x,2y))\\
 \nonumber&=\sum_{u\text{ odd}}\mu(u)\sum_{\ff\in\FF}\sum_{x,y\in\ZZ}\psi\left(\frac{xu}{X}\right)\psi\left(\frac{2yu}{X}\right)e(\theta\ff(xu,2yu))\\
 \nonumber&+\sum_{u\text{ even}}\mu(u)\sum_{\ff\in\FF}\sum_{x,y\in\ZZ}\psi\left(\frac{xu}{X}\right)\psi\left(\frac{yu}{X}\right)e(\theta\ff(xu,yu))\\
 &:=\sum_{u<U}\mu(u)\sum_{\ff\in\FF}\mathcal{R}_{u,\ff}(\theta)
 }

For simplicity we restrict our attention to $u$ even.  The same argument is applied to $u$ odd. We write $\frac{u^2}{q}=\frac{u_0}{q_0}$ in irreducible form, then we have
\al{\nonumber
\ruf\left(\frac{r}{q}+\beta\right)&=\sum_{x,y\in\ZZ}\mu(u)\psi\left(\frac{xu}{X}\right)\psi\left(\frac{yu}{X}\right)e\left(\ff(xu,yu)\left(\frac{r}{q}+\beta\right)\right)\\
\nonumber&=e\left(-b_{\ff}\left(\frac{r}{q}+\beta\right)\right)\sum_{x_0,y_0(q_0)}e\left(\frac{u_0}{q_0}\fff(x_0,y_0)r\right)\\
&\times\left[\sum_{x\equiv x_0(q_0),y\equiv y_0(q_0)}\psi\left(\frac{xu}{X}\right)\psi\left(\frac{yu}{X}\right)e(\fff(xu,yu)\beta)\right]
}

Now applying Poisson summation to the bracket, we have
\al{\label{ee}
\nonumber[\cdot]&=\sum_{\xi,\zeta\in\ZZ}\int_{-\infty}^{\infty}\infint\psi\left(\frac{(xq_0+x_0)u}{X}\right)\psi\left(\frac{(yq_0+y_0)u}{X}\right)e\left(\beta\fff((x_0+xq_0)u,(y_0+yq_0)u)-x\xi-y\zeta\right)dxdy\\
&=\frac{X^2}{u^2q_0^2}\sum_{\xi,\zeta\in\ZZ}e\left(\frac{x_0\xi}{q_0}+\frac{y_0\zeta}{q_0}\right)\dinfint\psi(x)\psi(y)e\left(\fff(x,y)X^2\beta-\frac{X\xi}{uq_0}x-\frac{X\zeta}{uq_0}y\right)dxdy
}

Putting \eqref{ee} back to \eqref{ruf}, we have
 \aln{\ruf\left(\frac{r}{q}+\beta\right)=\frac{X^2}{u^2}e\left(-b_{\ff}\left(\frac{r}{q}+\beta\right)\right)\sum_{\xi,\zeta\in\ZZ}\mathcal{S}_{\ff}(q_0,u_0r,\xi,\zeta)\mathcal{J}_{\ff}(\beta;uq_0,\xi,\zeta),
}

where 
\aln{
\mathcal{S}_{\ff}(q_0,u_0r,\xi,\zeta):=\frac{1}{q_0^2}\sum_{x_0,y_0(q_0)}e\left(\frac{u_0r}{q_0}\fff(x_0,y_0)+\frac{x_0\xi}{q_0}+\frac{y_0\zeta}{q_0}\right),
}
and 
\aln{
\mathcal{J}_{\ff}(\beta;uq_0,\xi,\zeta):=\dinfint\psi(x)\psi(y)e\left(\fff(x,y)X^2\beta-\frac{X\xi}{uq_0}x-\frac{X\zeta}{uq_0}y\right)dxdy.
}

We can compute $\mathcal{S}_{\ff}$ explicitly.  For simplicity we assume $q_0$ is odd, and $A_{\ff}$ is invertible in $
\ZZ/q_0\ZZ$.  We record a standard fact of exponential sum:
\aln{\sum_{a\in\ZZ/q\ZZ}e_q(x^2)=i^{\epsilon(q)}q^{\frac{1}{2}},
}
where $\epsilon(q)=0$ if $q\equiv 1(4)$ and $\epsilon(q)=1$ if $q\equiv 3(4)$.  From this, one can get 
\al{\label{exsum}\sum_{r\in\ZZ/q\ZZ}e_q(rx^2)=\left(\frac{r}{q}\right)i^{\epsilon(q)}q^{\frac{1}{2}}
}
if $(r,q)=1$.
Now complete square of $\mathcal{S}_{\ff}$ and apply \eqref{exsum} to $\mathcal{S}_{\ff}$, we get
\al{\nonumber
\mathcal{S}_{\ff}(q_0,u_0r,\xi,\zeta)&=\frac{1}{q_0^2}\sum_{x_0,y_0(q_0)}e_{q_0}\left(u_0r\fff(x_0,y_0)+x_0\xi+y_0\zeta\right)\\
\nonumber&=\frac{1}{q_0^2}\sum_{x_0,y_0(q_0)}e\left(u_0rA_{\ff}\left(x_0+B_{\ff}\bar{A}_{\ff}y_0\right)^2+\xi\left(x_0+B_{\ff}\bar{A}_{\ff}y_0\right)+2u_0r\bar{A}_{\ff}b_{\ff}^2y_0^2+\left(\zeta-\xi B_{\ff}\bar{A}_{\ff}\right)y_0\right)\\
&=\frac{1}{q_0^2}q_0^{\frac{1}{2}}i^{\epsilon(q_0)}\left(\frac{u_0rA_{\ff}}{q_0}\right)e_{q_0}\left(-\overline{4u_0rA_{\ff}}\xi^2\right)\sum_{y_0(q_0)}e_{q_0}\left(2u_0r\bar{A}_{\ff}b_{\ff}^2y_0^2+\left(\zeta-\xi B_{\ff}\bar{A}_{\ff}\right)y_0\right)
}

To deal with the sum in the above expression, we write $\frac{b_{\ff}^2}{q_0}=\frac{b_1}{q_1}$ where $(b_1,q_1)=1$.  Then after a linear change of variables and completing square we obtain
\al{\label{sf}\nonumber
\mathcal{S}_{\ff}(q_0,u_0r,\xi,\zeta)=\frac{i^{\epsilon(q_0)+\epsilon(q_1)}}{q_0^{\frac{1}{2}}q_1^{\frac{1}{2}}}\bd{1}_{\{A_{\ff}\zeta\equiv B_{\ff}\xi(\frac{q_0}{q_1})\}}\left(\frac{u_0rA_{\ff}}{q_0}\right)\left(\frac{2u_0rb_{\ff}\bar{A}_{\ff}}{q_1}\right)\\
\times e_{q_0}\left(-\overline{4u_0rA_{\ff}}\xi^2\right)e_{q_1}\left(-\overline{8u_0rb_1A_{\ff}}\left(\frac{q_1(A_{\ff}\zeta-B_{\ff}\xi)}{q_0}\right)^2\right)
}
From \eqref{sf} we see trivially that $|\mathcal{S}_{\ff}(q_0,u_0r,\xi,\zeta)|\leq q_0^{-\frac{1}{2}}$.  

Now we deal with $\mathcal{J}_{\ff}$.  For this we need standard results from non-stationary phase and stationary phase, and we record them here.\\

\emph{Non-stationary phase}: Let $\phi$ be a smooth compactly supported function on $(-\infty,\infty)$ and $f$ be a function which satisfies $|f^{'}(x)|>A>0$ in the support of $\phi$ and $A\geq |f^{(2)}(x)|,...,f^{(n)}(x)$ in the support of $\phi$.  Then
$$\infint\phi(x)e(f(x))dx\ll_{\phi,N}A^{-N}$$
\begin{proof}
By partial integration, 
\aln{&\infint\phi(x)e(f(x))=\infint\frac{\phi(x)}{f^{'}(x)}de(f(x))\\
&=-\infint\left(\frac{\phi}{f^{'}}\right)^{'}(x)e(f(x))dx=-\infint\frac{\phi^{'}(x)}{f^{'}(x)}+\frac{\phi(x)f^{(2)}(x)}{(f^{'}(x))^2}dx
}
From here, we see already that 
$$\infint\phi(x)e(f(x))dx\ll_{\phi,N}A^{-1}$$
Iterating partial integration $N$ times we can get the $A^{-N}$ bound.
\end{proof} 

\emph{Stationary phase}:  Let $f$ be a quadratic polynomial of two variables $x$ and $y$ with discriminant $-D$, where $D>0$.  Let $\phi(x,y)$ be a smooth compactly supported function on $\RR^2$, then 
$$\dinfint\phi(x,y)e(f(x,y))dxdy\ll_{\phi}\frac{1}{\sqrt{D}}.$$
\begin{proof}
After using an orthonormal matrix $L$ to change variables we can change the above integral into the from
$$\dinfint\phi(L(x,y))e\left(-x^2-\frac{D}{4}y^2\right)dxdy$$
Using Plancherel formula, 
\aln{
\dinfint\phi(L(x,y))e\left(-x^2-\frac{D}{4}y^2\right)dxdy=\frac{1}{i\sqrt{D}}\dinfint\widehat{\phi\circ L}(u,v)e\left(\frac{u^2}{4}+\frac{v^2}{D}\right)dudv\\
}
We caution the reader that $e(-x^2-\frac{D}{4}y^2)$ is not in $L^{2}$,  the above formula is obtained in the following way: first approximate $e^{2\pi i(-x^2-\frac{D}{4})y^2}$ by $e^{(-\epsilon+2\pi i)(-x^2-\frac{D}{r}y^2)}$, where we can apply Plancherel formula, then let $\epsilon\rightarrow 0$ and pass the limit.  Therefore, 
\al{\dinfint\phi(x,y)e(f(x,y))dxdy\leq\frac{1}{\sqrt{D}}||\widehat{\phi\circ L}||_1\leq\frac{1}{\sqrt{D}}||\phi||_1
}

\end{proof}
If either $\xi \gg U\geq TX\beta uq_0$ or $\zeta \gg U\geq TX\beta uq_0$, then the non-stationary phase condition is satisfied, we have $\mathcal{J}_{\ff}(\beta;uq_0,\xi,\zeta)\ll(\frac{uq_0}{X\xi})^{N}$ for any $N$, so these terms are negligible.  Now we deal with the case $\xi,\zeta\ll U$.  Recall that the discriminant of $\ff$ is $-8b_{\ff}^2$, by the stationary phase, we have
\al{\label{jbound}\mathcal{J}_{\ff}(\beta;uq_0,\xi,\zeta)\ll\text{min}\left\{1,\frac{1}{TX^2|\beta|}\right\}
}

With this, one gets
$$\ruf(\frac{r}{q}+\beta)\ll \frac{X^2}{u^2}\sum_{\xi,\zeta\ll u}q_0^{-\frac{1}{2}}\frac{1}{TX^2|\beta|}\ll\frac{u}{q^{\frac{1}{2}}T|\beta|}$$
using the fact that $u^2q_0\geq q$.  Therefore, we have 
\al{\label{rnubound}
\rnu\left(\frac{r}{q}+\beta\right)\ll T^{\delta}\sum_{u<U}\frac{u}{q^{\frac{1}{2}}T|\beta|}\ll\frac{T^{\delta-1}U^2}{q^{\frac{1}{2}}|\beta|}
}
Now we are able to bound $\mathcal{I}_1$
\begin{lem}\label{i1}
$$\mathcal{I}_{1}\ll NT^{2(\delta-1)}N^{-\eta}$$
\end{lem}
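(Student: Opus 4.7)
The plan is to leverage the pointwise bound \eqref{rnubound} on $\hrnu$ together with the fact that the factor $(1-\Tf(\theta))$ vanishes to first order at each rational $r/q$ with $q<Q_0$. Write $\theta=\frac{r}{q}+\beta$. For $q<Q_0$ the only contribution to $\Tf$ in the short arc $|\beta|\leq \frac{1}{qM}$ comes from the single spike centered at $r/q$, so $\Tf\left(\frac{r}{q}+\beta\right)=\tf\!\left(\frac{N}{K_0}\beta\right)$. Since $\tf(x)=(1-|x|)^+$, we have
$$\left|1-\Tf\!\left(\tfrac{r}{q}+\beta\right)\right|\ll \min\!\left\{1,\;\tfrac{N|\beta|}{K_0}\right\}.$$
I would split the $\beta$-integral into the two natural subregions: (a) $|\beta|\leq K_0/N$, where $(1-\Tf)$ is small; and (b) $K_0/N<|\beta|\leq 1/(qM)$, where $\Tf=0$.

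In region (b), the factor $(1-\Tf)$ is just $1$, and \eqref{rnubound} gives $|\hrnu|\ll T^{\delta-1}U^2/(q^{1/2}|\beta|)$. Hence
\aln{
\sum_{q<Q_0}\sum_{r(q)}{}^{'}\int_{K_0/N}^{1/(qM)}\frac{T^{2(\delta-1)}U^4}{q\beta^2}d\beta
\ll\sum_{q<Q_0}\phi(q)\cdot\frac{T^{2(\delta-1)}U^4}{q}\cdot\frac{N}{K_0}
\ll \frac{N\,T^{2(\delta-1)}U^4 Q_0}{K_0}.
}
In region (a), one must be careful near $\beta=0$ because the derivation of \eqref{rnubound} used $\mathcal{J}_{\ff}\ll 1/(TX^2|\beta|)$, which is only effective for $|\beta|\gtrsim 1/N$. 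Using the sharper form $\mathcal{J}_{\ff}\ll\min\{1,1/(TX^2|\beta|)\}$ from \eqref{jbound} and propagating through the same computation that yielded \eqref{rnubound}, I obtain
$$\left|\hrnu\!\left(\tfrac{r}{q}+\beta\right)\right|\ll \frac{T^{\delta}X^2 U^2}{q^{1/2}}\cdot\min\!\left\{1,\tfrac{1}{TX^2|\beta|}\right\}$$
(up to a harmless $\log$). Combining with $|1-\Tf|\ll N|\beta|/K_0$ in this range and integrating:
\aln{
\int_{|\beta|\leq K_0/N}\!\left(\tfrac{N|\beta|}{K_0}\right)^{\!2}\!\frac{T^{2\delta}X^4U^4}{q}\min\!\left\{1,\tfrac{1}{(TX^2\beta)^2}\right\}d\beta
\ll \frac{T^{2\delta}X^4U^4}{q K_0^2}\!\left(\tfrac{N^2}{N^3}+\int_{1/N}^{K_0/N}\!1\,d\beta\right)
\ll \frac{N\,T^{2(\delta-1)}U^4}{q K_0},
}
after substituting $X^2=N/T$ and using $K_0\geq 1$. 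Summing over $r(q)$ and $q<Q_0$ reproduces the bound $\ll N\,T^{2(\delta-1)}U^4 Q_0/K_0$ obtained in region (b).

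Putting the two regions together yields $\mathcal{I}_1\ll N\,T^{2(\delta-1)}\cdot U^4 Q_0/K_0$, so the lemma will follow provided the parameter choices in \eqref{determine} guarantee $U^4 Q_0/K_0\ll N^{-\eta}$, which is the standard small-arc compatibility condition (i.e.\ the singular-series truncation scale $Q_0$ and the M\"obius truncation $U$ are chosen as sufficiently small powers of $N$ relative to the hat-function scale $K_0$). The only non-routine point in the argument is extending the pointwise bound \eqref{rnubound} uniformly across the near-zero regime $|\beta|<1/N$ through the $\min$ inside \eqref{jbound}; once this is handled, the rest is a direct $L^2$ calculation of a one-dimensional integral of $\min\{1,N|\beta|/K_0\}^2\min\{1,1/(TX^2|\beta|)\}^2$ against the $1/q$ counting.
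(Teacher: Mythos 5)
Your proposal is correct and follows essentially the same approach as the paper's own proof: the paper splits the $\beta$-integral into the region $|\beta|\leq K_0/N$ (where $|1-\Tf|^2 = N^2\beta^2/K_0^2$ is inserted) and the region $K_0/N<|\beta|\leq 1/(qM)$ (where $|1-\Tf|\leq 1$), bounds $\hrnu$ by \eqref{rnubound} in both, and arrives at the identical bound $\mathcal{I}_1\ll N Q_0 T^{2(\delta-1)}U^4/K_0$. Your extra care near $\beta=0$ via the $\min$ in \eqref{jbound} is unnecessary (the $\beta^2$ from $|1-\Tf|^2$ already cancels the $1/\beta^2$ blow-up from \eqref{rnubound}), but it does no harm and lands on the same estimate.
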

\begin{proof}
We divide the integral into three parts:
\aln{
\mathcal{I}_1&=\sum_{q<Q_0}\sum_{r(q)}{}^{'}\int_{\frac{r}{q}-\frac{1}{qM}}^{\frac{r}{q}+\frac{1}{qM}}\left|(1-\Tf(\theta))\hrnu(\theta)\right|^2d\theta\\&=\sum_{q<Q_0}\sum_{r(q)}^{'}\int_{-\frac{K_0}{N}}^{\frac{K_0}{N}}|\cdot|^2d\beta+\int_{\frac{K_0}{N}}{}^{\frac{1}{qM}}|\cdot|^2d\beta+\int_{-\frac{1}{qM}}^{-\frac{K_0}{N}}|\cdot|^2d\beta
}
For the first summand, we insert $|1-\Tf(\frac{r}{q}+\beta)|^2=\frac{N^2\beta^2}{K_0^2}$ and bound $\hrnu$ by \eqref{rnubound}.  For the second and the third summands,  we trivially bound $|1-\Tf(\theta)|^2$ by 1 and $\hrnu$
by \eqref{rnubound}.  Then we get
\al{\mathcal{I}_1\ll\frac{NQ_0T^{2(\delta-1)}U^4}{K_0}\llet T^{2\delta-1}X^2N^{-\eta},
}
which is a power saving.
\end{proof}

\subsection{Minor Arc Analysis II}  
In this section we deal with $\mathcal{I}_2$.  We divide the $q$-sum 2-adically: 
\al{\label{iq}\mathcal{I}_Q:=\sum_{Q\leq q<2Q}\sum_{r(q)}{}^{'}\int_{\frac{r}{q}-\frac{1}{qM}}^{\frac{r}{q}+\frac{1}{qM}}\left|\hrnu(\theta)\right|^2d\theta
}
We will show that for all $Q_0\leq Q<X$, $\II_{Q}$ has a power saving,  in the next section we will show that $\II_{Q}$ has a power saving for the range $X\leq Q\leq M$.  Clearly these will imply Theorem \ref{minorthm}.\\

Recall from \eqref{ruf} that 
\al{\nonumber
\hrnu\left(\frac{r}{q}+\beta\right)&=\sum_{u<U}\sum_{\ff\in\FF}\ruf\left(\frac{r}{q}+\beta\right)\\
&=\sum_{u<U}\sum_{\ff\in\FF}e\left(-b_{\ff}\left(\frac{r}{q}+\beta\right)\right)\frac{X^2}{u^2}\sum_{\xi,\zeta\in\ZZ}\mathcal{S}_{\ff}(q_0,u_0r,\xi,\zeta)\JJ_{\ff}(\beta;uq_0,\xi,\zeta)
}

Apply Cauchy-Schwartz inequality to the $u$ variable, we have
\al{\label{rnurq}
\nonumber\left|\hrnu\left(\frac{r}{q}+\beta\right)\right|^2\leq &X^4\left|\sum_{\ff\in\FF}\sum_{\xi,\zeta\in\ZZ}e\left(-b_{\ff}\left(\frac{r}{q}+\beta\right)\right)\mathcal{S}_{\ff}(q_0,u_0r,\xi,\zeta)\JJ_{\ff}(\beta;uq_0,\xi,\zeta)\right|^2\\
\nonumber=&X^4\sum_{\ff,\ff^{'}\in\FF}e\left(-\left(b_{\ff}-b_{\ff^{'}}\right)\frac{r}{q}\right)\sum_{\xi,\zeta\in\ZZ}\sum_{\xi^{'},\zeta^{'}\in\ZZ}\SM_{\ff}(q_0,u_0r,\xi,\zeta)\overline{\SM_{\ff^{'}}(q_0,u_0r,\xi^{'},\zeta^{'})}\\
\nonumber&\text{     }\JJ_{\ff}(\beta;uq_0,\xi,\zeta)\overline{\JJ_{\ff^{'}}(\beta;uq_0,\xi^{'},\zeta^{'})}e\left(-(b_{\ff}-b_{\ff^{'}})\beta\right)
}

Changing variables $\theta=\frac{r}{q}+\beta$ in \eqref{iq} and putting \eqref{rnurq} back to \eqref{iq}, we get
\al{\nonumber
\mathcal{I}_Q&\ll {X^4}\sum_{\ff,\ff^{'}\in\FF}\sum_{\xi,\zeta\in\ZZ}\sum_{\xi^{'},\zeta^{'}\in\ZZ}\sum_{Q\leq q<2Q}\left(\sum_{r(q)}{}^{'}e\left(-\left(b_{\ff}-b_{\ff^{'}}\right)\frac{r}{q}\right)\SM_{\ff}(q_0,u_0r,\xi,\zeta)\overline{\SM_{\ff^{'}}(q_0,u_0r,\xi^{'},\zeta^{'})}\right)\\
&\times \int_{-\frac{1}{qM}}^{\frac{1}{qM}}\JJ_{\ff}(\beta,uq_0,\xi,\zeta)\overline{\JJ_{\ff^{'}}(\beta;uq_0,\xi^{'},\zeta^{'})}e((-b_{\ff}+b_{\ff})\beta)d\beta}

We again split $\II_{Q}$ into non-Archimedean and Archimedean pieces.  For the Archimedean part, we use \eqref{jbound} to bound $\JJ$.  We have
\al{\nonumber&
\int_{-\frac{1}{qM}}^{\frac{1}{qM}}\JJ_{\ff}(\beta,uq_0,\xi,\zeta)\overline{\JJ_{\ff^{'}}(\beta;uq_0,\xi^{'},\zeta^{'})}e((-b_{\ff}+b_{\ff})\beta)d\beta\ll\infint \text{min}\left\{1,\frac{1}{TX^2|\beta|}\right\}^2d\beta\\
&\ll\int_{-\frac{1}{TX^2}}^{\frac{1}{TX^2}}1d\beta+\left(\int_{-\infty}^{-\frac{1}{TX^2}}+\int_{\frac{1}{TX^2}}^{\infty}\right)\frac{1}{T^2X^4\beta^2}d\beta\ll\frac{1}{TX^2}.
}
Now we analyze the non-Archimedean part.  Again for simplicity we only deal with $q_0$ odd, and $A_{\ff}$, $A_{\ff^{'}}$ invertible in $\ZZ/q_0\ZZ$.
We set 
\al{\label{ssf}\SM(q,q_0,u_0,\xi,\zeta,\ff,\xi^{'},\zeta^{'},\ff^{'})=\sum_{r(q)}{}^{'}e\left(-\left(b_{\ff}-b_{\ff^{'}}\right)\frac{r}{q}\right)\SM_{\ff}(q_0,u_0r,\xi,\zeta)\overline{\SM_{\ff^{'}}(q_0,u_0r,\xi^{'},\zeta^{'})}
}

Recall that $\frac{b_{\ff}^{2}}{q_0}=\frac{b_{1}}{q_1}$, and similarly we write $\frac{b_{\ff^{'}}^{2}}{q_0}=\frac{b_{1}^{'}}{q_1^{'}}$.  Plug \eqref{sf} in\eqref{ssf} , then we obtain
\al{\label{sd}\nonumber &
\SM(q,q_0,u_0,\xi,\zeta,\ff,\xi^{'},\zeta^{'},\ff^{'})=\bd{1}_{\sk{A_{\ff}\zeta\equiv B_{\ff}\xi(\frac{q_0}{q_1})\\A_{\ff^{'}}\zeta^{'}\equiv B_{\ff^{'}}\xi^{'}(\frac{q_0}{q_1^{'}})}}\times\frac{i^{\epsilon({q_1})-\epsilon(q_1^{'})}}{q_0q_1^{\frac{1}{2}}{q_1^{'}}^{\frac{1}{2}}}\left(\frac{A_{\ff}}{q_0}\right)\left(\frac{A_{\ff^{'}}}{q_0}\right)\\
&\nonumber\times \sum_{r(q)}{}^{'}\left(\frac{2u_0rb_1\bar{A}_{\ff}}{q_1}\right)\left(\frac{2u_0rb_1^{'}\bar{A}_{\ff^{'}}}{q_1^{'}}\right)e_{q}((-b_{\ff}+b_{\ff^{'}})r)e_{q_0}\left(-\overline{8u_0b_1A_{\ff}} \frac{q_0}{q_1}\left(\frac{q_1(A_{\ff}\zeta-B_{\ff}\xi)}{q_0}\right)^2\bar{r}\right)\\
&\times e_{q_0}\left(\overline{8u_0b_1^{'}A_{\ff^{'}}} \frac{q_0}{q_1^{'}}\left(\frac{q_1^{'}(A_{\ff^{'}}\zeta^{'}-B_{\ff^{'}}\xi^{'})}{q_0}\right)^2\bar{r}\right)e_{q_0}\left(-\overline{4u_0rA_{\ff}}\xi^2+\overline{4u_0rA_{\ff^{'}}}{\xi^{'}}^2\right)
}

This is a type of Kloosterman sum.  For our use in \eqref{sd} we only need an elementary $\frac{3}{4}$ bound originally due to Kloosterman \cite{Kl27} (compared to the $\frac{1}{2}$ bound implied by the Weil conjecture).  We stated it here:

\begin{lem}\label{Kloostermansum}
Let $S(m,n,q,\chi)=\sum_{x(q)}^{'}e_q(mx+n\bar{x})\chi(x)$, then we have
$$S(m,n,q,\chi)\ll_{\epsilon}\min\left\{(m,q),(n,q)\right\}^{\frac{1}{4}}q^{\frac{3}{4}+\epsilon}.$$
\end{lem}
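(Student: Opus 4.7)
The plan is to prove this classical bound via Kloosterman's original elementary approach, combining the Chinese Remainder Theorem with explicit evaluation at prime-power moduli and a fourth-moment argument at primes. First, I would use CRT to factor the sum. If $q = q_1 q_2$ with $(q_1, q_2) = 1$ and $\chi$ decomposes correspondingly as $\chi_1 \chi_2$, then $S(m,n,q,\chi)$ factors as a product of two Kloosterman sums modulo $q_1$ and $q_2$, with $m$ and $n$ twisted by the appropriate inverses. The divisor-function loss $d(q) \ll q^\epsilon$ is absorbed into the $q^\epsilon$ factor of the statement, reducing the problem to the prime-power case $q = p^k$.

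For prime powers $q = p^k$ with $k \geq 2$, I would evaluate $S$ essentially exactly using a Hensel-style lifting. Writing $x = y(1 + z p^{\lceil k/2\rceil})$ with $y$ ranging over invertible residues modulo $p^{\lceil k/2\rceil}$ and $z$ modulo $p^{\lfloor k/2\rfloor}$, and expanding $\bar x \equiv \bar y - z p^{\lceil k/2\rceil}\bar y \pmod{p^k}$, the $z$-sum produces a delta-condition of the form $m y^2 \equiv n \pmod{p^{\lfloor k/2\rfloor}}$. Counting the at most two solutions of this quadratic congruence in each invertible class yields the strong bound $|S(m,n,p^k,\chi)| \ll (m,n,p^k)^{1/2} p^{k/2}$ for $k\geq 2$, which is stronger than the $p^{3k/4}$ we need. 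The delicate case is $q = p$ prime, where I would apply Kloosterman's fourth-moment trick. Expanding
$$|S|^4 = \sum_{x_1,x_2,x_3,x_4\in(\ZZ/p)^*} \chi(x_1 \bar x_2 x_3 \bar x_4)\, e_p\bigl(m(x_1-x_2+x_3-x_4) + n(\bar x_1-\bar x_2+\bar x_3-\bar x_4)\bigr)$$
and substituting $x_i \mapsto t x_i$, the character contribution is invariant because the four $\chi(t)$ factors cancel in pairs. Averaging over $t \in (\ZZ/p)^*$ exposes inner exponential sums and reduces the problem to counting $4$-tuples in $((\ZZ/p)^*)^4$ satisfying simultaneously $x_1-x_2+x_3-x_4 \equiv 0$ and $\bar x_1-\bar x_2+\bar x_3-\bar x_4 \equiv 0 \pmod p$; an elementary count gives $O(p^2)$ solutions, whence $|S|^4 \ll p^3$ and $|S| \ll p^{3/4}$.

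The factor $\min\{(m,q),(n,q)\}^{1/4}$ is propagated by exploiting the gcd structure: if $p^j$ divides both $m$ and $q$ with $j \leq k$, then $e_{p^k}(mx)$ is constant on cosets of $p^{k-j}\ZZ/p^k\ZZ$, so the sum collapses to a Kloosterman sum of smaller modulus $p^{k-j}$, producing an extra factor $p^{j/4}$ upon reapplying the prime or prime-power bound. The involution $x \mapsto \bar x$ swaps the roles of $m$ and $n$ in $S$ (sending $\chi$ to $\bar\chi$ but preserving absolute values), so we may assume that the minimum is attained by $(m,q)$. The main obstacle is the prime case: Kloosterman's averaging trick must be executed carefully in the presence of the nontrivial character $\chi$, and one must verify that the dilation $x_i \mapsto t x_i$ preserves the character contribution; this works precisely because of the structural pattern $\chi \bar\chi \chi \bar\chi$ in the fourth moment. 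All other steps are bookkeeping that fits within the $q^\epsilon$ tolerance.
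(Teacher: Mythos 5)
Your overall plan — CRT reduction, explicit evaluation at prime powers via $p$-adic stationary phase, and Kloosterman's fourth-moment trick at primes, with the character twist handled by noting $\chi\bar\chi = 1$ on the relevant diagonals — is exactly the adaptation of Kloosterman's 1927 argument that the paper has in mind (the paper itself offers no proof, only the remark that Kloosterman's proof ``is easily modified''). The CRT step and the $k\geq 2$ prime-power step are sound, and you are right that the dilation $x\mapsto tx$ satisfies $S(m,n,q,\chi) = \chi(t)S(mt,n\bar t,q,\chi)$, so $|S|$ depends only on $mn$; and that the ``diagonal'' solutions $\{x_1,x_3\}=\{x_2,x_4\}$ make $\chi(x_1\bar x_2 x_3\bar x_4)=1$, so the character does not spoil the solution count.

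However, the mechanism you use at primes has a genuine gap. Averaging the expanded $|S|^4$ over the dilation parameter $t$ produces
\[
|S(m,n)|^4 = \frac{1}{p-1}\sum_{\vec x}\chi(x_1\bar x_2 x_3\bar x_4)\sum_{t\,(p)}{}^{'} e_p\!\bigl(mt\,\alpha + n\bar t\,\beta\bigr),
\qquad \alpha = x_1-x_2+x_3-x_4,\ \ \beta=\bar x_1-\bar x_2+\bar x_3-\bar x_4,
\]
and the inner $t$-sum is not a product of delta conditions $\mathbf{1}_{\alpha\equiv 0}\mathbf{1}_{\beta\equiv 0}$ — it is itself a Kloosterman sum $K(m\alpha,n\beta;p)$ whenever $\alpha,\beta\not\equiv 0$. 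Bounding it trivially by $p-1$ gives $|S|^4\ll p^4$, i.e.\ only the trivial $|S|\ll p$; bounding it by Weil is circular. The correct version of Kloosterman's moment trick averages over \emph{both} arguments: since $|S(m,n,p,\chi)|$ depends only on $mn$, one has
\[
\sum_{m,n\,(p)}|S(m,n,p,\chi)|^4 \ \geq\ (p-1)\max_{(c,p)=1}|S(1,c,p,\chi)|^4,
\]
while expanding the left side turns \emph{both} linear phases into genuine delta conditions,
\[
\sum_{m,n\,(p)}|S(m,n,p,\chi)|^4 = p^2\sum_{\alpha\equiv\beta\equiv 0\,(p)}\chi(x_1\bar x_2 x_3\bar x_4) \ \ll\ p^2\cdot p^2,
\]
using your $O(p^2)$ count of $4$-tuples. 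This yields $|S|\ll p^{3/4}$. So your conclusion is right, but the route through $t$-averaging does not reach it; replacing it with the $m,n$-moment argument closes the gap. (Your treatment of the gcd factor is also somewhat loose — the sum does not literally ``collapse to a smaller modulus'' when $p^j\mid(m,q)$ since $e_{p^k}(n\bar x)$ and $\chi(x)$ retain their periods — but the conclusion there is correct and easy to repair.)
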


We have an extra multiplicative character $\chi$ compared to the original paper by Kloosterman \cite{Kl27}, but his proof is easily modified to suit our case.\\

Apply Lemma \ref{Kloostermansum} to \eqref{sd}, and recall that $q_1=\frac{q_0}{(q_0,b_{\ff}^2)},q_1^{'}=\frac{q_0}{(q_0,b_{\ff^{'}}^2)}$,  then we obtain

\al{\label{twistsbound}|\mathcal{S}(q,q_0,u_0,\xi,\zeta,\ff,\xi^{'},\zeta^{'},\ff^{'})|\ll_{\epsilon}\left(b_{\ff}-b_{\ff^{'}},q\right)^{\frac{1}{4}}\left(\frac{q}{q_0}\right)^2(q_0,b_{\ff}^2)^{\frac{1}{2}}(q_0,{b_{{\ff^{'}}}}^2)^{\frac{1}{2}}q^{-\frac{5}{4}+\epsilon}.
}
In the case when $b_{\ff}=b_{\ff^{'}}$ and $\ff(\xi,-\zeta)\neq \ff^{'}(\xi^{'},-\zeta^{'})$, we prove a better bound for\\$\stwist$.  This will be needed in the next section.
\begin{lem} \label{betterbound}If $\bff=\bfp$, then
$$|\stwist|\ll_{\epsilon}(q_0,b_{\ff}^2)q^{-\frac{9}{8}+\epsilon}\left(\frac{q}{q_0}\right)^{\frac{17}{8}}\left|\ff(\xi,-\zeta)-\ff^{'}(\xi^{'},-\zeta^{'})\right|^{\frac{1}{2}}$$.
\end{lem}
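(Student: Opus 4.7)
My plan is to refine the analysis of equation \eqref{sd} in the special case $b_\ff = b_{\ff'}$, where two simplifications make the inner $r$-sum reduce from a genuine Kloosterman sum to a Ramanujan sum. First, the factor $e_q(-(b_\ff - b_{\ff'})r)$ trivializes to $1$. Second, the equalities $q_1 = q_1'$ and $b_1 = b_1'$ (both consequences of $b_\ff = b_{\ff'}$) allow the two $r$-dependent Jacobi symbols to combine into
$$\left(\frac{2u_0 r b_1 \bar A_\ff}{q_1}\right)\left(\frac{2u_0 r b_1 \bar A_{\ff'}}{q_1}\right) = \left(\frac{(2 u_0 r b_1)^2}{q_1}\right)\left(\frac{\bar A_\ff \bar A_{\ff'}}{q_1}\right) = \left(\frac{\bar A_\ff \bar A_{\ff'}}{q_1}\right),$$
since $(r,q_1)=1$. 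Thus the inner $r$-sum collapses to $\sum_{r(q)}^{'} e_{q_0}(N \bar r)$ with $N \pmod{q_0}$ recording all the $\bar r$-coefficients coming from the $e_{q_0}$ and $e_{q_1}$ factors in \eqref{sd}.

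The key algebraic task is the identification of $N$. Gathering the $\bar r$-coefficients and using the discriminant identity $A_\ff C_\ff - B_\ff^2 = 2 b_\ff^2$ (equivalently, the fact that $\fff$ has discriminant $-8 b_\ff^2$), a direct simplification yields
$$N \equiv -\overline{8 u_0 b_\ff^2}\bigl[\Phi_\ff(\xi, \zeta) - \Phi_{\ff'}(\xi', \zeta')\bigr] \pmod{q_1},$$
where $\Phi_\ff(\xi,\zeta) := C_\ff \xi^2 - 2 B_\ff \xi \zeta + A_\ff \zeta^2$ is the quadratic form adjoint to $\fff$. Since $b_\ff = b_{\ff'}$, the bracketed integer agrees (up to the Fourier-dual convention of labelling the two Poisson variables) with $\ff(\xi,-\zeta) - \ff'(\xi',-\zeta')$, so $(N, q_0)$ divides a bounded multiple of $\bigl(\ff(\xi,-\zeta) - \ff'(\xi',-\zeta'),\, q_0\bigr)$.

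To conclude, I split $r = r_0 + q_0 t$ with $r_0 \in (\ZZ/q_0\ZZ)^{*}$ and $t$ modulo $q/q_0$; since the exponential depends only on $r_0$, the inner sum is bounded by $(q/q_0)|c_{q_0}(N)| \ll (q/q_0)(N, q_0)$. Combining the elementary inequality $(N, q_0) \leq |N|^{1/2} q_0^{1/2}$ with the identification above yields
$$(N, q_0) \ll \bigl|\ff(\xi,-\zeta) - \ff'(\xi',-\zeta')\bigr|^{1/2} q_0^{1/2},$$
and multiplying by the prefactor $1/(q_0 q_1) = (q_0, b_\ff^2)/q_0^2$ coming from \eqref{sd} gives the desired bound — in fact a slightly stronger one with $q_0$-exponent $-5/2$ in place of the stated $-17/8$, from which the claim follows trivially.

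The main obstacle will be the algebraic identification of $N$ with a unit multiple of $\ff(\xi,-\zeta) - \ff'(\xi',-\zeta')$ modulo $q_1$. The subtlety is that $\overline{b_\ff^2}$ is only defined modulo $q_1 = q_0/(q_0, b_\ff^2)$, not modulo $q_0$, so one has to track carefully how the indicator $\bd{1}_{\{A_\ff \zeta \equiv B_\ff \xi \,(q_0/q_1)\}}$ in \eqref{sd} is precisely what makes the reduction modulo $q_1$ well-defined, and how the conversion $e_{q_1}(x) = e_{q_0}((q_0/q_1)x)$ then lifts the identity back to the modulus $q_0$. Away from the primes dividing $(q_0, b_\ff^2)$ this is a routine completing-the-square computation in the Gauss sum, but on these bad primes the defect between the moduli $q_0$ and $q_1$ must be handled via the indicator, which is also where the factor $(q_0, b_\ff^2)$ in the final bound originates.
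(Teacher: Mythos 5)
You have correctly identified the key structural simplification that occurs when $\bff=\bfp$: the two $r$-dependent Jacobi symbols in \eqref{sd} combine (since $q_1=q_1'$ and $b_1=b_1'$) into a constant character, so the inner $r$-sum degenerates from a Kloosterman-type sum to a Ramanujan sum $c_{q_0}(N)$ satisfying $|c_{q_0}(N)|\le (N,q_0)$. This is in fact slightly cleaner than what the paper does — the paper treats the $r$-sum as a Kloosterman sum with a non-trivial twist and invokes the $3/4$ bound from Lemma \ref{Kloostermansum}, which is never worse but is never better than the Ramanujan bound in the case at hand.

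The proposal has a genuine gap, however, at the step where you pass from the congruence
$N\equiv -\overline{8u_0 b_{\ff}^{2}}\,[\Phi_{\ff}(\xi,\zeta)-\Phi_{\ff'}(\xi',\zeta')]\pmod{q_1}$
to the bound $(N,q_0)\ll |\ff(\xi,-\zeta)-\ff'(\xi',-\zeta')|^{1/2}q_0^{1/2}$. The inequality $(N,q_0)\le |N|^{1/2}q_0^{1/2}$ is an inequality between \emph{integers}; but the $\bar r$-coefficient $N$ is a residue class modulo $q_0$, and the algebraic identification with the form-difference $D=\ff(\xi,-\zeta)-\ff'(\xi',-\zeta')$ (up to a unit) holds only modulo $q_1=q_0/(q_0,\bff^2)$, not modulo $q_0$. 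At primes $p$ where $0<v_p(\bff^2)<v_p(q_0)$, the moduli $q_0$ and $q_1$ differ by a positive $p$-power, and one cannot conclude that the $p$-part of $(N,q_0)$ is controlled by $D$; moreover $\overline{\bff^2}$ is not even necessarily a unit modulo $q_1$ at such primes (one must work with $\bar b_1$, where $b_1=\bff^2/(q_0,\bff^2)$). You flag the modulus-defect as a subtlety to be absorbed by the indicator $\bd{1}_{\{A_{\ff}\zeta\equiv B_{\ff}\xi\ (q_0/q_1)\}}$, but the indicator merely ensures that the integer $L$ appearing in \eqref{sd} is well-defined; it does not by itself bound the $p$-part of $(N,q_0)$.

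The paper closes exactly this gap by a prime-by-prime decomposition of $q_0$ into two sets $\PC_1,\PC_2$, splitting primes $p^j\|q_0$ according to whether the $\bar r$-coefficient vanishes mod $p^{[j/2]}$. For $\PC_2$ the gcd contribution is trivially bounded by $q_0^{1/2}$; for $\PC_1$ the modular identity (valid only mod $p^{[j/2]}$ rather than mod $q_0$) forces $p^{[j/2]}\mid D$, whence $\prod_{\PC_1}p^j\ll |D|^2$. Combining with $q_0^{3/4}(N,q_0)^{1/4}$ produces exactly the stated exponent $q_0^{-17/8}$. Your proposal omits this splitting, and consequently the claimed exponent $q_0^{-5/2}$ is not actually supported by your argument: once $(N,q_0)$ is estimated honestly through the $\PC_1/\PC_2$ analysis (even with the sharper Ramanujan-sum input replacing the Kloosterman $3/4$ bound), the gain over the stated lemma is less clear-cut, as the exponents on $(q/q_0)$ and on $(q_0,\bff^2)$ shift unfavorably.
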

\begin{proof}
If $b_{\ff}=b_{\ff^{'}}$, then $q_1=q_1^{'}=\frac{q_0}{(q_0,b_{\ff}^2)}$. From \eqref{sd} we have
\al{
\nonumber&|\stwist|=\frac{(q_0,\bff^2)}{q_0^2}\cdot\frac{q}{q_0}\biggr\rvert\sum_{r(q_0)}{}^{'}\left(\frac{A_{\ff}A_{\ff^{'}}}{q_1}\right)\\
\nonumber&\times e_{q_0}\left(-\overline{8u_0b_1A_{\ff}}\frac{q_0}{q_1}\left(\frac{q_1(A_{\ff}\zeta-B_{\ff}\xi)}{q_0}\right)^2\bar{r}\right)\times 
e_{q_0}\left(\overline{8u_0b_1^{'}A_{\ff^{'}}}\frac{q_0}{q_1^{'}}\left(\frac{q_1^{'}(A_{\ff^{'}}\zeta^{'}-B_{\ff^{'}}\xi^{'})}{q_0}\right)^2\bar{r}\right)\\
\nonumber &\times e_{q_0}(-\overline{4u_0rA_{\ff}}\xi^2+\overline{4u_0rA_{\ff^{'}}{\xi^{'}}^2})\biggr\rvert
}
Clearly the term $|\cdot|$ is multiplicative.  We apply the Kloostrman $3/4$ bound to $|\cdot|$ using the $\bar{r}$ coefficient:
\al{\label{pest}\nonumber
&|\stwist|\ll_{\epsilon}\frac{(q_0,b_{\ff}^2)}{q_0^2}\left(\frac{q}{q_0}\right)q_0^{\frac{3}{4}+\epsilon}\\&\times\prod_{p^{j}||q_0}\left(p^{j},-\bar{A_{\ff}}\xi^2-\overline{2b_1A_{\ff}}\frac{q_0}{q_1}L^2+\bar{A_{\ff^{'}}}{\xi^{'}}^2+\overline{2b_1A_{\ff^{'}}}\frac{q_0}{q_1}{L^{'}}^2\right)^{\frac{1}{4}}
}
where $L=\frac{q_1(A_{\ff}\xi-B_{\ff}\zeta)}{q_0}$ and $L^{'}=\frac{q_1^{'}(A_{\ff^{'}}\xi^{'}-B_{\ff^{'}}\zeta^{'})}{q_0}$. Now we divide the set of all the primes dividing $q_0$ into two sets $\mathcal{P}_1$ and $\mathcal{P}_2$, where $\PC_1$ contains primes $p$ such that
$$\bar{A_{\ff}}\xi^2+\overline{2b_1A_{\ff}}\frac{q_0}{q_1}L^2\equiv\bar{A_{\ff^{'}}}{\xi^{'}}^2+\overline{2b_1A_{\ff^{'}}}\frac{q_0}{q_1}{L^{'}}^2(p^{[j/2]})$$
and $\PC_2$ is the complement of $\PC_1$. \\

For $p\in\PC_2$, the gcd of $p^{j}$ and $-\bar{A_{\ff}}\xi^2-\overline{2b_1A_{\ff}}\frac{q_0}{q_1}L^2+\bar{A_{\ff^{'}}}{\xi^{'}}^2+\overline{2b_1A_{\ff^{'}}}\frac{q_0}{q_1}{L^{'}}^2$ is at most $p^{\frac{j}{2}}$.  Therefore,
\al{\label{psmall}
\prod_{p\in\PC_2}\left(p^{j},-\bar{A_{\ff}}\xi^2-\overline{2b_1A_{\ff}}\frac{q_0}{q_1}L^2+\bar{A_{\ff^{'}}}{\xi^{'}}^2+\overline{2b_1A_{\ff^{'}}}\frac{q_0}{q_1}{L^{'}}^2\right)\leq \prod_{p\in\PC_2}p^{\frac{j}{2}}\leq q_0^{\frac{1}{2}}
}

For $p\in\PC_1$, we have
\al{
\bar{A_{\ff}}\xi^{2}+\overline{2b_1A_{\ff}}\frac{q_0}{q_1}L^2\equiv\bar{A_{\ff}}\xi^2+\overline{2b_{\ff}^2}(A_{\ff}\zeta-B_{\ff}\xi)^2\equiv\overline{2b_{\ff}^2}\tilde{\ff}(\xi,-\zeta)(\text{mod }p^{[\frac{j}{2}]}).
}

Similarly,
\al{\bar{A_{\ff^{'}}}{\xi^{'}}^{2}+\overline{2b_1A_{\ff^{'}}}\frac{q_0}{q_1}L^2\equiv\overline{2b_{\ff^{'}}^2}\tilde{\ff^{'}}(\xi^{'},-\zeta^{'})(\text{mod }p^{[\frac{j}{2}]})}

Since $\bff=\bfp$, we have $\ff(\xi,-\zeta)\equiv\ff^{'}(\xi^{'},-\zeta^{'})$(mod $p^{\frac{j}{2}}$) for every $p\in\PC_1$. Thus we have

\al{\label{plarge}\prod_{p\in\PC_1}\left(p^{j},-\bar{A_{\ff}}\xi^2-\overline{2b_1A_{\ff}}\frac{q_0}{q_1}L^2+\bar{A_{\ff^{'}}}{\xi^{'}}^2+\overline{2b_1A_{\ff^{'}}}\frac{q_0}{q_1}{L^{'}}^2\right)\leq \prod_{p\in\PC_1}p^j\ll|\ff(\xi,-\zeta)-\ff^{'}(\xi^{'},-\zeta^{'})|^2
}

Plugging \eqref{psmall} and \eqref{plarge} back into \eqref{pest} we obtain our lemma.
 \end{proof}

Now we go back to $\II_Q$.  Again by non-stationary phase the sum is supported on the terms $\xi,\xi^{'},\zeta,\zeta^{'}\ll U$.  Using \eqref{twistsbound} we have
\al{
\nonumber&\II_Q\ll_{\epsilon}\frac{N^{\epsilon}X^4U^4}{TX^2}\sum_{\ff,\ff^{'}\in\FF}\sum_{Q\leq q\leq2Q}(b_{\ff}-b_{\ff^{'}},q)^{\frac{1}{4}}\left(\frac{q}{q_0}\right)^{2}(q_0,b_{\ff}^2)^{\frac{1}{2}}(q_0,\bfp^2)^{\frac{1}{2}}q^{-\frac{5}{4}}\\
&\label{iq25}\ll_{\epsilon}\frac{N^{\epsilon}X^2U^8}{T}\sum_{\ff,\ff^{'}\in\FF}\sum_{Q\leq q\leq2Q}(b_{\ff}-b_{\ff^{'}},q)^{\frac{1}{4}}(q_0,b_{\ff}^2)^{\frac{1}{2}}(q_0,\bfp^2)^{\frac{1}{2}}q^{-\frac{5}{4}}
}  

We further split \eqref{iq25} into two parts according to $b_{\ff}=b_{\ff^{'}}$ or not:
$$\II_{Q}\leq \II_{Q}^{(=)}+\II_Q^{(\neq)}.$$
We first deal with $\II_Q^{(=)}$.  Noticing that $\frac{q}{q_0}\leq U$, we have
\al{\label{iqe}
\II_Q^{(=)}\nonumber&\ll_{\epsilon}\frac{N^{\epsilon}X^2U^8}{T}\sum_{\ff\in\FF}\sum_{Q\leq q\leq2Q}\frac{(q,b_{\ff}^2)}{q}\sum_{\sk{\ff^{'}\in\FF\\b_{\ff^{'}}=b_{\ff}}}1\\
\nonumber&\ll_{\epsilon}\frac{N^{\epsilon}X^2U^8}{T}\sum_{\ff\in\FF}\sum_{a|b_{\ff}^2}a\sum_{Q\leq q\leq2Q}\bd{1}_{a|q}\sum_{\sk{\ff^{'}\in\FF\\b_{\ff^{'}}=b_{\ff}}}1\\
&\ll_{\epsilon}\frac{N^{\epsilon}X^2U^8}{T}\sum_{\ff\in\FF}\sum_{\sk{\ff^{'}\in\FF\\b_{\ff^{'}}=b_{\ff}}}1
}

For the last sum above, we introduce Lemma 5.2 from \cite{BK13}:

\begin{lem}[Bourgain, Kontorovich]\label{bk2}
There exists a positive constant $\mathcal{C}$ and there exists some $\eta_0 >0$ which only depend on the spectral gap of $\Gamma$ such that for any $1\leq q<N$ and any $r(${mod} $q$),
\aln{\sum_{\gamma\in\FF}\bd{1}_{{\langle e_1,\gamma\bd{r}\rangle\equiv r{(\text{mod }q)}}}\ll\frac{T^{\delta}}{q^{\eta_0}}.}
The implied constant is independent of $r$.
\end{lem}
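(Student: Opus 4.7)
The plan is to reduce the count to an equidistribution statement for $\FF$ across cosets of the principal congruence subgroup $\Gamma(q)$, for which the spectral gap of Theorem \ref{spec} (packaged already as Lemma \ref{bk1}) does the heavy lifting, and then combine this with a counting estimate for the number of ``good'' cosets using the structure of $\Gamma/\Gamma(q)$ described in Lemma \ref{aproduct}.

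First I would decompose the indicator variable along cosets of $\Gamma(q)$:
\aln{
\sum_{\gamma\in\FF}\bd{1}_{\{\langle\bd{e}_1,\gamma\bd{r}\rangle\equiv r(q)\}}
=\sum_{\bar{\gamma}\in\Gamma/\Gamma(q)}\bd{1}_{\{\langle\bd{e}_1,\bar{\gamma}\bd{r}\rangle\equiv r(q)\}}\cdot\#\left(\FF\cap\bar{\gamma}\Gamma(q)\right).
}
The quantity $\langle\bd{e}_1,\gamma\bd{r}\rangle\pmod q$ depends only on the coset $\bar{\gamma}$, so the decomposition is exact. I would then apply Lemma \ref{bk1} with $\beta=0$ (so $K=1$) to each coset, obtaining
\aln{
\#\left(\FF\cap\bar{\gamma}\Gamma(q)\right)=\frac{|\FF|}{[\Gamma:\Gamma(q)]}+O(T^{\Theta}),
}
where $\Theta<\delta$ is the exponent controlled by the spectral gap. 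Substituting back and letting $\mathcal{N}_r(q):=\#\{\bar{\gamma}\in\Gamma/\Gamma(q):\langle\bd{e}_1,\bar{\gamma}\bd{r}\rangle\equiv r(q)\}$ gives
\aln{
\sum_{\gamma\in\FF}\bd{1}_{\{\langle\bd{e}_1,\gamma\bd{r}\rangle\equiv r(q)\}}=\frac{\mathcal{N}_r(q)}{[\Gamma:\Gamma(q)]}|\FF|+O\bigl(\mathcal{N}_r(q)T^{\Theta}\bigr).
}

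Next I would estimate $\mathcal{N}_r(q)/[\Gamma:\Gamma(q)]$. Using the multiplicativity in Lemma \ref{aproduct}(1), this reduces to a local count at each prime power $p^{n_i}\|q$. For $(q,6)=1$ we have $\Aa/\Aa(p^{n_i})\cong SO_Q(\ZZ/p^{n_i}\ZZ)$, which acts transitively on the fibres $\{Q=0\}$; hence the proportion of cosets sending $\langle\bd{e}_1,\cdot\bd{r}\rangle$ into a fixed residue class is $\ll p^{-n_i+O(1)}$. The saturation statements in Lemma \ref{aproduct}(3) handle the primes $2$ and $3$ by allowing a uniform loss at a bounded level and then descending to full orbits at higher powers. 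Assembling these local bounds yields $\mathcal{N}_r(q)/[\Gamma:\Gamma(q)]\ll_\epsilon q^{-1+\epsilon}$ (uniformly in $r$), so that the main term is already $\ll |\FF|/q^{1-\epsilon}\ll T^{\delta}/q^{\eta_0}$ for any $\eta_0<1$.

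Finally I would balance the error against the main term. Since $[\Gamma:\Gamma(q)]\ll q^{C}$ for a fixed $C$ coming from the dimension of $\Gamma$ (and from Lemma \ref{aproduct}, essentially $C=3$ away from bad primes), the estimate for $\mathcal{N}_r(q)$ gives $\mathcal{N}_r(q)T^{\Theta}\ll q^{C-1+\epsilon}T^{\Theta}$. Choosing $\eta_0>0$ small enough that $q^{C-1+\eta_0+\epsilon}\ll T^{\delta-\Theta}$ over the range of $q$ of interest (which is guaranteed since the relevant $q$ in the application of the lemma is a small power of $N$ and $\Theta<\delta$) absorbs the error into $T^{\delta}/q^{\eta_0}$. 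The main obstacle is making sure the exponent $\eta_0$ is \emph{uniform in} $q$, including across all mixed moduli at the ramified primes $p=2,3$: this is exactly where the full strength of Lemma \ref{aproduct}(3) is used, and the freedom to choose $\eta_0$ small in terms of the spectral gap exponent $\Theta$ (rather than trying to prove the optimal $\eta_0=1$) is what makes the argument work for all $q$ in the claimed range.
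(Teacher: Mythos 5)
The paper does not actually prove this statement---it is imported verbatim from Lemma 5.2 of Bourgain--Kontorovich's Apollonian paper, as announced in the introduction (``These Lemmas can be modified word by word to fit our setting''). So there is no internal proof to compare against; I will instead comment on whether your sketch would constitute a proof.

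Your decomposition into cosets of $\Gamma(q)$ and the application of Lemma~\ref{bk1} with $\beta=0$, $K=1$ to get $\#(\FF\cap\bar{\gamma}\Gamma(q))=|\FF|/[\Gamma:\Gamma(q)]+O(T^\Theta)$, together with the bound $\mathcal{N}_r(q)/[\Gamma:\Gamma(q)]\ll_\epsilon q^{-1+\epsilon}$ from the local structure in Lemma~\ref{aproduct}, is exactly the right mechanism for \emph{small} $q$. But the error term you produce, $O(\mathcal{N}_r(q)T^\Theta)\ll q^{C-1+\epsilon}T^\Theta$, cannot be absorbed for large $q$. Here $C\geq 3$ (in fact the index $[\Gamma:\Gamma(q)]$ grows like $q^6$ for $SL_2$ over $\ZZ[\sqrt{2}i]$), while $\delta-\Theta<1$. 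For the error to fall under $T^{\delta}/q^{\eta_0}$ you would need $q^{C-1+\eta_0+\epsilon}\ll T^{\delta-\Theta}$; with $T=N^{1/200}$ this forces $q\ll N^{(\delta-\Theta)/(200(C-1))}$, a tiny power of $N$, whereas the lemma is asserted for all $q<N$ and is applied in \S3.4--3.5 with moduli up to $T$ and beyond. Your closing remark that ``the relevant $q$ in the application is a small power of $N$'' silently weakens the claim; it does not rescue the argument for the stated range.

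What is missing is precisely what the constant $\mathcal{C}$ in the statement is for: the Bourgain--Kontorovich proof exploits the product structure $\FF=\{\gamma_1\gamma_2\}$ with $\|\gamma_1\|\asymp T_1$, $\|\gamma_2\|\asymp T_2=T_1^{\mathcal{C}}$, and splits into regimes. For small $q$ one fixes $\gamma_1$ and equidistributes $\gamma_2$ over cosets (as you do, but in the $\gamma_2$ variable, which buys a much larger admissible range because $T_2$ is the dominant norm); for $q$ beyond the reach of spectral methods one must switch to an entirely elementary lattice-point/multiplicity argument that makes no reference to the expander property at all. Your proposal never invokes the factorization of $\FF$, never chooses $\mathcal{C}$, and supplies no argument in the non-spectral range, so as written it proves strictly less than the lemma claims.
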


Now we can finally determine $K_0,Q_0$ and $U$.  We set
\al{\label{determine}
Q_0=T^{\frac{\delta-\Theta}{20}}, K_0=Q_0^2,U=Q_0^{\frac{\eta_0^2}{100}}.
}
Apply Lemma \ref{bk2} to \eqref{iqe}, then we get 
\al{\II_Q^{(=)}\lle N^{-\eta_0+\epsilon}T^{2\delta-1}X^2U^9\llet T^{2\delta-1}X^2N^{-\eta}}
which is a power saving.

Now we deal with $\II_Q^{(\neq)}$. We introduce a parameter $H$ which is small power of $N$.  We further split $\II_Q^{(\neq)}$ into $\II_Q^{\neq,>}+\II_Q^{(\neq,\leq)}$ according to $(b_{\ff},\bfp)>H$ or not.  We first handle big gcd.

\begin{lem} 
$$\II_Q^{(\neq,>)}\llet NT^{2(\delta-1)}N^{-\eta}$$
\end{lem}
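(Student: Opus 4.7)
The strategy is to combine the pointwise bound \eqref{iq25}, restricted to pairs $(\ff, \ff')$ with $(\bff, \bfp) > H$, with a counting estimate for such pairs coming from Lemma \ref{bk2}. Starting from
$$\iqg \lle \frac{N^\epsilon X^2 U^8}{T}\sum_{\substack{\ff, \ff' \in \FF \\ (\bff, \bfp) > H}}\sum_{Q \leq q \leq 2Q}(\bff - \bfp, q)^{1/4}(q_0, \bff^2)^{1/2}(q_0, \bfp^2)^{1/2} q^{-5/4},$$
I would first dispose of the inner $q$-sum by the trivial bounds $(q_0, \bff^2)^{1/2}(q_0, \bfp^2)^{1/2} \leq q_0 \leq q$ and $(\bff - \bfp, q)^{1/4} \leq q^{1/4}$, so that each summand is at most $1$ and the $q$-sum is $\ll Q \leq X$. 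Any refinement of these trivial bounds (e.g.\ in the spirit of Lemma \ref{betterbound}) is reserved for the complementary case $\iqs$, where the pair count is not small and one really needs structural information on the exponential sum.

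For the pair count I would parametrize $(\bff, \bfp) > H$ by a common divisor: for each $\ff$,
$$\#\{\ff' \in \FF : (\bff, \bfp) > H\} \leq \sum_{\substack{d \mid \bff \\ d > H}}\#\{\ff' \in \FF : \bfp \equiv 0 \pmod{d}\}.$$
Applying Lemma \ref{bk2} (in a coordinate-invariant form: the statement is for $\langle e_1, \gamma\bd{r}\rangle$, but the same spectral-gap argument gives $\ll T^\delta/d^{\eta_0}$ for the congruence of any fixed nonzero linear functional of $\gamma \bd{r}$, in particular for $\bfp$) bounds each inner count by $T^\delta/d^{\eta_0}$. Since $\bff \ll T$, the divisor bound gives $d(\bff) \lle T^\epsilon$, and combined with the restriction $d > H$ one has $\sum_{d \mid \bff, \, d > H} d^{-\eta_0} \lle T^\epsilon H^{-\eta_0}$. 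Summing over $\ff \in \FF$, whose cardinality is $\asymp T^\delta$, yields a total pair count of size $\lle T^{2\delta + \epsilon} H^{-\eta_0}$.

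Combining with the prefactor,
$$\iqg \lle N^\epsilon \cdot \frac{X^2 U^8}{T}\cdot T^{2\delta + \epsilon} H^{-\eta_0} \cdot X = N^\epsilon X^3 U^8 T^{2\delta - 1} H^{-\eta_0},$$
and using $N T^{2(\delta-1)} = T^{2\delta - 1}X^2$ the claim $\iqg \llet N T^{2(\delta-1)} N^{-\eta}$ reduces to the inequality $H^{\eta_0} \gg X U^8 N^{\epsilon + \eta}$, whose right-hand side is a fixed positive power of $N$. Since $H$ is a free parameter (introduced as a small power of $N$), this can certainly be arranged; the right way to do it is to postpone the exact choice of the exponent $\eta_H$ until the twin bound for $\iqs$ is in place, so that $H$ can be balanced between the two contributions. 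The only genuine obstacle I foresee is verifying that Lemma \ref{bk2} really does transfer from the coordinate $a_\gamma$ to the coordinate $b_\gamma$; once that (essentially routine) point is checked, the rest of the argument is elementary bookkeeping.
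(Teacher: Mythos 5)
Your proposal has a genuine gap. The trivial bound on the inner $q$-sum --- observing that each summand $(\bff-\bfp,q)^{1/4}(q_0,\bff^2)^{1/2}(q_0,\bfp^2)^{1/2}q^{-5/4}$ is at most $1$, and hence the $q$-sum is $\ll Q$ --- is far too lossy. In the range of $\II_2$ one has $Q$ as large as $X\asymp N^{199/400}$, so the final bound carries an extra factor $\asymp X$. To kill this you need $H^{\eta_0}\gg XU^8 N^\eta$, i.e.\ $H$ at least of size $N^{1/(2\eta_0)}$. But $H$ cannot be chosen that large: the twin bound for $\iqs$ produces a contribution $\lle \nep T^{2\delta-1}X^2U^9 H^{9/4}Q_0^{-\eta_0}$, with $H^{9/4}$ entering positively, which forces $H\ll Q_0^{4\eta_0/9}N^{-4\eta/9}$. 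Since $Q_0=T^{(\delta-\Theta)/20}$ with $T=N^{1/200}$, this caps $H$ at a minuscule power of $N$. The two constraints are incompatible, so the ``balance'' you propose to postpone simply does not exist: no choice of $H$ makes both $\iqg$ and $\iqs$ power-saving if you concede a factor of $Q$ in the $q$-sum. Your heuristic that ``refinements are reserved for $\iqs$'' is misplaced --- the refinement needed here is not Lemma \ref{betterbound}, but an honest treatment of the $q$-sum.

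What the paper does instead is retain the $1/q$: after bounding $(\bff-\bfp,q)^{1/4}\leq q^{1/4}$ and replacing $q_0$ by $q$ in the gcds, the $q$-sum becomes $\sum_{Q\leq q\leq 2Q}(q,\bff^2)^{1/2}(q,\bfp^2)^{1/2}/q$. Fixing $\tilde{q}_1=(q,\bff^2)$ and $\tilde{q}_1'=(q,\bfp^2)$, which run over divisors of $\bff^2$, $\bfp^2$, one has $[\tilde{q}_1,\tilde{q}_1']\mid q$ and hence $(\tilde{q}_1\tilde{q}_1')^{1/2}/q\leq 1$; summing the $\ll Q/[\tilde{q}_1,\tilde{q}_1']+1$ admissible $q$'s and then over divisor pairs yields $\lle N^\epsilon$ for the whole $q$-sum --- a saving of a full factor of $Q$ over your approach, which is exactly what closes the argument. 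Your remaining steps (parametrizing the pairs by a common divisor $h>H$, and invoking Lemma \ref{bk2} transferred to the coordinate $\bff=\langle e_2,\gamma\bd{r}\rangle$ to get $\sum_{\bfp\equiv0(h)}1\ll T^\delta/H^{\eta_0}$) are the same as the paper's and are sound; the paper also uses the coordinate transfer implicitly.
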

\begin{proof}
Apply \eqref{iq25} and replace $(q_0,b_{\ff}^2)$ by $(q,b_{\ff}^2)$ and $(b_{\ff}-b_{\ff^{'}},q)$ by $q$:
\al{\nonumber
\II_Q^{(\neq,>)}&\lle \frac{N^{\epsilon}X^2U^8}{T}\sum_{\ff\in\FF}\sum_{\sk{\ff^{'}\in\FF\\(b_{\ff},b_{\ff^{'}})>H}}\sum_{Q\leq q\leq2Q}\frac{(q,b_{\ff}^2)^{\frac{1}{2}}(q,b_{\ff^{'}}^2)^{\frac{1}{2}}}{q}\\
&\nonumber\lle \frac{N^{\epsilon}X^2U^8}{T}\sum_{\ff\in\FF}\sum_{\sk{h|b_{\ff}^2\\h>H}}\sum_{\sk{\ff^{'}\in\FF\\b_{\ff^{'}}\equiv 0(h)}}\sum_{Q\leq q\leq2Q}\frac{(q,b_{\ff}^2)^{\frac{1}{2}}(q,b_{\ff^{'}}^2)^{\frac{1}{2}}}{q}\\
&\lle \frac{N^{\epsilon}X^2U^8}{T}\sum_{\ff\in\FF}\sum_{\sk{h|b_{\ff}^2\\h>H}}\sum_{\sk{\ff^{'}\in\FF\\b_{\ff^{'}}\equiv 0(h)}}\sum_{\tilde{q_1}|b_{\ff^2}}\sum_{\tilde{q_1}^{'}|\bfp^2}(\tilde{q_1}\tilde{q_1}^{'})^{\frac{1}{2}}\sum_{\substack{Q\leq q\leq 2Q \\ [\tilde{q_1},\tilde{q_1}^{'}]|q}}\bd{1}
}
Now since $[\tilde{q_1},\tilde{q_1}^{'}]>(\tilde{q_1}\tilde{q_1}^{'})^{\frac{1}{2}}$, the above 
\al{\label{gcdlarge}
\lle  \frac{N^{\epsilon}X^2U^9}{T}\sum_{\ff\in\FF}\sum_{\sk{h|b_{\ff}^2\\h>H}}\sum_{\sk{\ff^{'}\in\FF\\b_{\ff^{'}}\equiv 0(h)}}\sum_{\tilde{q_1}|b_{\ff^2}}\sum_{\tilde{q_1}^{'}|\bfp^2}1}

From Lemma \ref{bk2},  we have $\sum_{\sk{\ff^{'}\in\FF\\{\bfp\equiv0(h)}}}1\ll\frac{T^{\delta}}{H^{\eta_0}}.$ We set $H=Q_0^{\frac{\eta_0}{10}}$. Therefore,
\aln{
\eqref{gcdlarge}\lle \frac{\nep X^2U^9T^{2\delta}}{TH^{\eta_0}}\lle \frac{\nep T^{2\delta-1}X^2U^9}{H^{\eta_0}}\llet T^{2\delta-1}X^2N^{-\eta}
}
which is a power saving. 
\end{proof}
Next we deal with small gcd. We write $(b_{\ff},b_{\ff^{'}})=h$ and $b_{\ff}=hg_1,\bfp=hg_2,\bff-\bfp=hg_3$.  Then $g_1,g_2,g_3$ are mutually relatively prime.  We have
\begin{lem}
$$\iqs\llet N^{1-\eta}T^{2(\delta-1)}$$
\end{lem}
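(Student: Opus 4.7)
The plan is to substitute the Kloosterman-type bound \eqref{twistsbound} into $\iqs$, exploit the coprimality $(g_1,g_2)=(g_1,g_3)=(g_2,g_3)=1$ forced by the parameterization $b_\ff=hg_1$, $b_{\ff'}=hg_2$, $b_\ff-b_{\ff'}=hg_3$, and then use Lemma \ref{bk2} to save on the pair count $(\ff,\ff')$ in a way that dovetails with a divisor-sum analysis of the $q$-sum. The target bound is $\iqs\ll X^2 T^{2\delta-1}N^{-\eta}=NT^{2(\delta-1)}N^{-\eta}$.

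First I would substitute \eqref{twistsbound}, use $q_0\mid q$ together with the elementary inequality $(q,a^2)^{1/2}\leq (q,a)$, and bound $(q/q_0)^2\leq U^4$, to reduce the problem to estimating
\[
\iqs \lle \frac{\nep X^2 U^8}{T}\sum_{h\leq H}\,\sum_{\substack{\ff,\ff'\in\FF\\(b_\ff,b_{\ff'})=h,\;b_\ff\neq b_{\ff'}}}\,\sum_{Q\leq q\leq 2Q}\frac{(hg_3,q)^{1/4}(q,hg_1)(q,hg_2)}{q^{5/4}}.
\]
I would then open the inner $q$-sum by summing over divisors $d_i\mid hg_i$ with $[d_1,d_2,d_3]\mid q$. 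Since $g_1,g_2,g_3$ are pairwise coprime, these divisors decouple except through their common divisors with $h$; a routine divisor-sum estimate together with $\sum_{q\asymp Q}q^{-5/4}\asymp Q^{-1/4}$ should yield an inner bound of the shape $N^\epsilon h^{O(1)}Q^{-1/4}$ per pair $(\ff,\ff')$.

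Next I would bound the outer $(\ff,\ff')$-sum using Lemma \ref{bk2}. In its crudest form, $\#\{(\ff,\ff'):h\mid b_\ff,\,h\mid b_{\ff'}\}\lle T^{2\delta}/h^{2\eta_0}$; refined versions that further constrain $b_\ff$ and $b_{\ff'}$ modulo individual divisors of $q$ will save additional factors and cancel the polynomial-in-$h$ loss from the divisor side. Summing over $h\leq H$ and collecting, I expect a bound of the form
\[
\iqs \lle \nep X^2 U^8 T^{2\delta-1}Q^{-\alpha}H^{\beta}
\]
with some $\alpha>0$ and small $\beta$. The parameter choices \eqref{determine}, namely $Q_0=T^{(\delta-\Theta)/20}$, $H=Q_0^{\eta_0/10}$, $U=Q_0^{\eta_0^2/100}$, are calibrated precisely so that $U^8 H^\beta/Q^\alpha$ is a negative power of $T$, converting the above into $X^2 T^{2\delta-1}N^{-\eta}$ for some $\eta>0$.

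The hardest part will be preserving the $Q^{-1/4}$ gain through the divisor manipulations: because \eqref{twistsbound} uses only the elementary Kloosterman $3/4$-bound, crude estimates like $(hg_3,q)^{1/4}\leq q^{1/4}$ would destroy the $Q^{-1/4}$ cushion from $\sum_{q\asymp Q}q^{-5/4}$. The refined Lemma \ref{bk2} counts with $q$-level moduli (not merely $h$-level) are needed exactly to offset any small positive power of $Q$ or $H$ that creeps in from the divisor sums. Balancing these exponents against the choice $H=Q_0^{\eta_0/10}$ already dictated by $\iqg$ is what produces a common power-saving for both the large-gcd and small-gcd contributions.
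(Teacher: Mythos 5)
Your proposal identifies the right tools and the same overall strategy as the paper: open the gcd factors $(q_0,b_\ff^2)^{1/2}(q_0,b_{\ff'}^2)^{1/2}(b_\ff-b_{\ff'},q)^{1/4}$ into divisor sums, collapse the $q$-sum using $[\,\cdot\,]\mid q$, and invoke Lemma \ref{bk2} to control the $\ff'$-count. The parameter calibration in \eqref{determine} and the role of the pairwise coprimality of $g_1,g_2,g_3$ are also correctly flagged. So the approach is the paper's.

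There is, however, one organizational wrinkle that your sketch glosses over and that matters for making the argument close. You first sum over $q$ and over the divisors $d_i$ of $hg_i$, and only then propose to bound the remaining $(\ff,\ff')$-count via Lemma \ref{bk2}. But once the divisor $d_3\mid (b_\ff-b_{\ff'},q)$ has been summed away into the per-pair factor $N^\epsilon h^{O(1)}Q^{-1/4}$, there is no $q$-scale modulus left on which to apply a ``refined'' version of Lemma \ref{bk2}; the only remaining arithmetic constraint on $(\ff,\ff')$ is the small modulus $h\le H$, which saves merely $H^{-2\eta_0}$. The paper's proof circumvents this by keeping the divisor $g_3\mid (b_\ff-b_{\ff'})$, $g_3\ll Q$, alive until after the $\ff'$-sum is performed: it swaps the $g_3$- and $\ff'$-sums, applies Lemma \ref{bk2} with the large modulus $g_3$ to get $\sum_{\ff'}\bd{1}_{b_{\ff'}\equiv b_\ff (g_3)}\ll T^\delta g_3^{-\eta_0}$, and only then sums $\sum_{g_3\ll Q}g_3^{-3/4-\eta_0}\asymp Q^{1/4-\eta_0}$, which combines with the earlier $Q^{-1/4}$ into the clean saving $Q^{-\eta_0}\le Q_0^{-\eta_0}$. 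The $h$-divisor contribution is absorbed into a harmless $H^{9/4}$, since $h\le H$ and $H$ is a tiny power of $Q_0$. So the essential fix to your plan is the order of summation: defer the collapse of $g_3$ until after Lemma \ref{bk2} has been used with $g_3$ as the modulus.
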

\begin{proof}
From \eqref{iq25},
\aln{
\iqs&\lle \frac{\nep X^2U^8}{T}\sum_{\ff\in\FF}\sum_{\sk{\ff^{'}\in\FF\\(\bfp,\bff)\leq H}}\sum_{Q\leq q\leq2Q} \frac{(q_0,\bff^2)^{\frac{1}{2}}(q_0,\bfp^2)^{\frac{1}{2}}(\bff-\bfp,q)^{\frac{1}{4}}}{q^{\frac{5}{4}}}\\
&\lle \frac{\nep X^2U^8}{TQ^{\frac{5}{4}}}\sum_{\ff\in\FF}\sum_{\sk{\ff^{'}\in\FF\\(\bfp,\bff)\leq H}}\sum_{Q\leq q\leq2Q}{(q_0,\bff)(q_0,\bfp)(\bff-\bfp,q)^{\frac{1}{4}}}\\
&\lle \frac{\nep X^2U^8}{TQ^{\frac{5}{4}}}\sum_{\ff\in\FF}\sum_{\sk{\ff^{'}\in\FF\\(\bfp,\bff)\leq H}}\sum_{h|(\bff,\bfp)}h^{\frac{9}{4}}\sum_{g_1|b_{\ff}}\sum_{g_2|\bfp}\sum_{\sk{g_3|\bff-\bfp\\g_3\ll Q}} g_1g_2g_3^{\frac{1}{4}}\sum_{\sk{Q\leq q\leq2Q\\ [hg_1,hg_2,hg_3]|q}}1\\
&\lle \frac{\nep X^2U^8H^{\frac{9}{4}}}{TQ^{\frac{5}{4}}}\sum_{\ff\in\FF}\sum_{\sk{\ffp\in\FF\\(\bfp,\bff)\leq H}}\sum_{g_1|b_{\ff}}\sum_{g_2|\bfp}\sum_{\sk{g_3|\bff-\bfp\\g_3\ll Q}} g_1g_2g_3^{\frac{1}{4}}\frac{q}{g_1g_2g_3}\\
&\lle  \frac{\nep X^2U^8H^{\frac{9}{4}}}{TQ^{\frac{1}{4}}}\sum_{\ff\in\FF}\sum_{\sk{\ffp\in\FF\\(\bfp,\bff)\leq H}}\sum_{\sk{g_3|\bff-\bfp\\g_3\ll Q}}g_3^{-\frac{3}{4}}\\
&\lle \frac{\nep X^2U^8H^{\frac{9}{4}}}{TQ^{\frac{1}{4}}}\sum_{\ff\in\FF}\sum_{g_3\ll Q}g_3^{-\frac{3}{4}}\sum_{\sk{\ff\in\FF\\\bfp\equiv \bff(g_3)}}1
}
By Lemma \ref{bk2},  
$\sum_{\sk{\ffp\in\FF}}\bd{1}_{\bfp\equiv \bff(g_3)}\ll \frac{T^{\delta}}{g_3^{\eta_0}}$.  Therefore,
$$\iqs\ll_{\epsilon}\frac{\nep X^2U^9H^{\frac{9}{4}}}{TQ^{\frac{1}{4}}}\sum_{\ff\in\FF}T^{\delta}Q^{\frac{1}{4}-\eta_0}\lle \nep T^{2\delta-1}X^2U^9H^{\frac{9}{4}}Q_0^{-\eta_0}\llet T^{2\delta-1}X^2N^{-\eta}$$

Again we have a power savings for $\mathcal{I}_Q^{(\neq,\leq)}$. 
\end{proof}
In summary, we have
\begin{lem}\label{i2}
$$\mathcal{I}_2\ll_{\eta}N^{1-\eta}T^{2(\delta-1)}$$
\end{lem}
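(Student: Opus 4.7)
The plan is to assemble Lemma \ref{i2} from the three dyadic bounds just established for $\II_Q^{(=)}$, $\II_Q^{(\neq,>)}$, and $\II_Q^{(\neq,\leq)}$, and then to rewrite the result in the normalization asserted by the statement.

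First I would dyadically decompose the $q$-range. Write $[Q_0, X) = \bigcup_j [Q_j, 2Q_j)$ with $Q_j = 2^j Q_0$, noting that only $O(\log N)$ such dyadic windows meet $[Q_0, X)$. Then
$$\II_2 \;\leq\; \sum_{\substack{Q \text{ dyadic}\\ Q_0 \leq Q < X}} \II_Q,$$
and for each $Q$ in this range the three-way split
$$\II_Q \;\leq\; \II_Q^{(=)} + \II_Q^{(\neq,>)} + \II_Q^{(\neq,\leq)}$$
applies, the cases being $b_{\ff} = b_{\ffp}$, $b_{\ff} \neq b_{\ffp}$ with $(b_{\ff}, b_{\ffp}) > H$, and $(b_{\ff}, b_{\ffp}) \leq H$.

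Second, I would invoke the three preceding lemmas, each of which supplied an estimate of the shape
$$\II_Q^{(=)}, \; \II_Q^{(\neq,>)}, \; \II_Q^{(\neq,\leq)} \;\llet\; T^{2\delta-1} X^2 N^{-\eta}$$
uniformly in $Q$, where the implied $\eta > 0$ depends only on the spectral gap for $\Gamma$ and on the fixed choices of $K_0$, $Q_0$, $H$, and $U$ made in \eqref{determine}. Summing the three pieces for fixed $Q$ and then summing over the $O(\log N) \leq N^{\epsilon}$ dyadic scales, the logarithmic loss is absorbed into $N^{-\eta}$ by shrinking $\eta$ by an arbitrarily small amount (consistent with the convention that each new $\eta$ is permitted to satisfy all previous constraints). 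This yields $\II_2 \llet T^{2\delta-1} X^2 N^{-\eta}$.

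Third, I would rewrite the bound in the form claimed. Since our normalization is $N = T X^2$, one has
$$T^{2\delta-1} X^2 \;=\; T^{2(\delta-1)} \cdot T X^2 \;=\; N \, T^{2(\delta-1)},$$
so the estimate becomes $\II_2 \llet N^{1-\eta} T^{2(\delta-1)}$, matching the statement. The expected main obstacle is essentially nil at this step: all of the genuine analytic work — stationary/non-stationary phase, the Kloosterman $3/4$-bound (Lemma \ref{Kloostermansum}), the refined bound of Lemma \ref{betterbound}, and the uniform arithmetic-progression count of Lemma \ref{bk2} — was already carried out in deriving the three dyadic bounds, so Lemma \ref{i2} reduces to bookkeeping plus the $N = TX^2$ identity. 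The only point meriting care is confirming uniformity of the savings in $Q$, which is visible from the explicit dependencies $H = Q_0^{\eta_0/10}$, $U = Q_0^{\eta_0^2/100}$ that produce a $Q$-independent power gain.
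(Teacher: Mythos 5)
Your proposal is correct and matches the paper's (implicit) argument exactly: the paper states Lemma \ref{i2} as a summary of the three preceding dyadic bounds on $\II_Q^{(=)}$, $\II_Q^{(\neq,>)}$, and $\II_Q^{(\neq,\leq)}$, each uniform in $Q$, so summing over the $O(\log N)$ dyadic scales in $[Q_0,X)$ and using $N=TX^2$ to rewrite $T^{2\delta-1}X^2$ as $N\,T^{2(\delta-1)}$ gives the claim. Nothing further is needed.
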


\subsection{Minor Arc Analysis III}  
In this section we deal with the last part of the integral, which is on the minor arcs corresponding to $X<q<M$, namely $\II_3$.  We keep all the notations from the previous sections.  Return to \eqref{ruf}, and again for simplicity we restrict our attention on the summands of $\rnu$ where $u$ even:
\al{&\nonumber \ruf\left(\frac{r}{q}+\beta\right)=\sum_{x,y\in\ZZ}\psi\left(\frac{xu}{X}\right)\psi\left(\frac{yu}{X}\right)e\left(\ff(xu,yu)\left(\frac{r}{q}+\beta\right)\right)\\
&=e\left(-\left(\frac{r}{q}+\beta\right)\bff\right)\sum_{x,y\in\ZZ}\psi\left(\frac{xu}{X}\right)\psi\left(\frac{yu}{X}\right)e\left(\frac{ru_0\fff(x,y)}{q_0}\right)e(\fff(x,y)u^2\beta)
}
Now we rewrite $e_{q_0}(ru_0\fff(x,y))$ into its Fourier expansion.  We have
\aln{
\nonumber e_{q_0}(ru_0\fff(x,y))&=\frac{1}{q_0^2}\sum_{m(q_0)}\sum_{n(q_0)}\sum_{l(q_0)}\sum_{t(q_0)}e_{q_0}(ru_0\fff(l,t)+lm+tn)e_{q_0}(-mx-ny)\\
&=\sum_{m(q_0)}\sum_{n(q_0)}\SM_{\ff}(q_0,u_0r,m,n)e_{q_0}(-mx-ny)
}
Therefore,
$$\ruf\left(\frac{r}{q}+\beta\right)=e_q(-r\bff)\sum_{m(q_0)}\sum_{n(q_0)}\SM_{\ff}(q_0,u_0r,m,n)\lambda_{\ff}\left(X,\beta;\frac{m}{q_0},\frac{n}{q_0},u\right),$$
where
$$\lambda_{\ff}\left(X,\beta;\frac{m}{q_0},\frac{n}{q_0},u\right):=\sum_{x,y\in\ZZ}\psi\left(\frac{xu}{X}\right)\psi\left(\frac{yu}{X}\right)e\left(-\frac{mx}{q_0}\right)e\left(-\frac{ny}{q_0}\right)e(\ff(xu,yu)\beta).$$

We apply the Cauchy-Schwarz inequality to the $u$ variable for $\II_Q$:
\aln{
\II_Q&=\sum_{Q\leq q \leq 2Q}\sum_{r(q)}{}^{'}\int_{-\frac{1}{qM}}^{\frac{1}{qM}}\left|\hrnu\left(\frac{r}{q}+\beta\right)\right|^2d\beta\\
&\ll U\sum_{u<U}\sum_{Q\leq q\leq 2Q}\sum_{r(q)}{}^{'}\int_{-\frac{1}{qM}}^{\frac{1}{qM}}\left|\sum_{\ff\in\FF}\ruf\left(\frac{r}{q}+\beta\right)\right|^2d\beta\\
&\ll U\sum_{u<U}\sum_{\ff\in\FF}\sum_{\ff^{'}\in\FF}\sum_{Q\leq q\leq2Q}\sum_{m,n,m^{'},n^{'}(q_0)}\left(\sum_{r(q)}{}^{'}\SM_{\ff}(q_0,u_0r,m,n)\overline{\SM_{\ff^{'}}(q_0,u_0r,m^{'},n^{'})}e_q(r(-\bff+\bfp))\right)\\
&\times \int_{-\frac{1}{qM}}^{\frac{1}{qM}}\lambda_{\ff}\left(X,\beta;\frac{m}{q_0},\frac{n}{q_0},u\right)\overline{\lambda_{\ff^{'}}\left(X,\beta;\frac{m^{'}}{q_0},\frac{n^{'}}{q_0},u\right)}d\beta
}

Since $m,n,m^{'},n^{'}$ comes from congruence classes (mod $q_0$), we can choose representatives such that $m,n,m^{'},n^{'}$ with absolute values bounded by $\frac{q_0}{2}$.  The main contribution of $\II_Q$ comes from the terms $m,n,m^{'},n^{'}\ll\frac{uq_0}{X}$ by non-stationary phase.  To see this,  for the terms with any of $m,n,m^{'}, n^{'}\gg\frac{uq_0}{X}$ (let's say $m\gg\frac{uq_0}{X}$), we use Poisson summation to rewrite $\lambda_{\ff}$:
\aln{
&\lambda_{\ff}\left(X,\beta;\frac{m}{q_0},\frac{n}{q_0},u\right)=\frac{X^2}{u^2}\dinfint\psi(x)\psi(y)e\left(-\frac{mX}{q_0u}x\right)e\left(-\frac{nX}{q_0u}y\right)e(\ff(xX,yX)\beta)dxdy\\
&+\sum_{\sk{\xi,\zeta\in\ZZ\\(\xi,\zeta)\neq(0,0)}}\frac{X^2}{u^2}\dinfint\psi(x)\psi(y)e\left(\left(\frac{\xi X}{u}-\frac{mX}{q_0u}\right)x\right)e\left(\left(\frac{\zeta X}{u}-\frac{nX}{q_0u}\right)y\right)e(\ff(xX,yX)\beta)dxdy
}
If $\xi\neq 0$, since $\frac{mX}{q_0u}\leq \frac{X}{2u}$ and $\ff(xX,yX)\beta\ll 1$, we have
\aln{
\dinfint\psi(x)\psi(y)e\left(\left(\frac{\xi X}{u}-\frac{mX}{q_0 u}\right)x\right)e\left(\left(\frac{\zeta X}{u}-\frac{nX}{q_0u}\right)y\right)e(\ff(xX,yX)\beta)dxdy\ll \left(\frac{u}{X\xi}\right)^{N_0}
}
for any $N_0>0$, by first applying non-stationary phase to the $x$ variable and trivially bounding the $y$ integral.  From this, one gets
\al{
\lambda_{\ff}\left(X,\beta;\frac{m}{q_0},\frac{n}{q_0},u\right)&=\frac{X^2}{u^2}\dinfint\psi(x)\psi(y)e\left(-\frac{mX}{q_0u}\right)e\left(-\frac{nX}{q_0u}\right)e(\ff(xX,yX)\beta)dxdy\\&+O\left(\left(\frac{u}{X}\right)^{N_0}\right)
} 
for any $N_0>0$.  We use non-stationary phase again to treat the above integral, then we obtain
$$\Big\rvert\lambda_{\ff}\left(X,\beta,\frac{m}{q_0},\frac{n}{q_0},u\right)\Big\rvert\ll\frac{X^2}{u^2}\text{min}\left\{\left(\frac{uq_0}{Xm}\right)^{2N_0},\left(\frac{uq_0}{Xn}\right)^{2N_0}\right\}\ll\frac{X^2}{u^2}\left(\frac{uq_0}{X}\right)^{2N_0}\frac{1}{m^{N_0}n^{N_0}}.$$

Therefore, we have
\aln{
\int_{-\frac{1}{qM}}^{\frac{1}{qM}}\lambda_{\ff}\left(X,\beta;\frac{m}{q_0},\frac{n}{q_0},u\right)\overline{\lambda_{\ff^{'}}\left(X,\beta;\frac{m^{'}}{q_0},\frac{n^{'}}{q_0},u\right)}d\beta\ll \frac{1}{QM}\frac{X^4}{u^4}\left(\frac{uq_0}{X}\right)^{4N_0}\frac{1}{m^{N_0}n^{N_0}{m^{'}}^{N_0}{n^{'}}^{N_0}}}
Now we use \eqref{twistsbound} to bound $|\mathcal{S}|$, we thus have
\al{
\nonumber&U\sum_{u<U}\sum_{\ff\in\FF}\sum_{\ff^{'}\in\FF}\sum_{Q\leq q\leq2Q}\sum_{\sk{m,n,m^{'},\text{ or }n^{'}\gg\frac{uq_0}{X}}}\left(\sum_{r(q)}^{'}\SM_{\ff}(q_0,u_0r,m,n)\SM_{\ff^{'}}(q_0,u_0r,m^{'},n^{'})e_q(r(-\bff+\bfp))\right)\\
\nonumber&\times \int_{-\frac{1}{qM}}^{\frac{1}{qM}}\lambda_{\ff}\left(X,\beta;\frac{m}{q_0},\frac{n}{q_0},u\right)\overline{\lambda_{\ff^{'}}\left(X,\beta;\frac{m^{'}}{q_0},\frac{n^{'}}{q_0},u\right)}d\beta\\
&\lle N^{\epsilon}UT^{2\delta}\sum_{u<U}\sum_{Q\leq q\leq2Q}\frac{T^{\frac{9}{4}}u^4}{Q^{\frac{5}{4}}}\frac{1}{QM}\frac{X^4}{u^4}\left(\frac{uq_0}{X}\right)^{4N_0}\sum_{m,n,m^{'},\text{or }n^{'}\gg \frac{uq_0}{X}}\frac{1}{m^{N_0}n^{N_0}{m^{'}}^{N_0}{n^{'}}^{N_0}}}
If We set $N_0=5$, then the above 
$$\ll N^{\epsilon}U^{20}T^{2\delta+\frac{63}{4}}X^{\frac{7}{4}}$$

Thus we see $|\mathcal{S}|$ is indeed mainly supported on $m,n,m^{'},n^{'}\ll \frac{uq_0}{X}$.   Now we split the terms $m,n,m^{'},n^{'}\ll \frac{uq_0}{X}$ into two parts according to whether $b_{\ff}=b_{\ff^{'}}$ or not:
$$\II_Q\ll\II_Q^{(=)}+\II_Q^{(\neq)},$$
where
\al{\nonumber&
\II_{Q}^{(=)}=\sum_{u<U}\sum_{\ff\in\FF}\sum_{\sk{\ff^{'}\in\FF\\\bfp=\bff}}\sum_{Q\leq q\leq2Q}\sum_{m,n,m^{'},n^{'}\ll \frac{uq_0}{X}}\SM(q,q_0,u_0,\xi,\zeta,\ff,\xi^{'},\zeta^{'},\ff^{'})\\
&\times\int_{-\frac{1}{qM}}^{\frac{1}{qM}}\lambda_{\ff}\left(X,\beta;\frac{m}{q_0},\frac{n}{q_0},u\right)\overline{\lambda_{\ff^{'}}\left(X,\beta;\frac{m^{'}}{q_0},\frac{n^{'}}{q_0},u\right)}d\beta
}
and 
\al{\nonumber&
\II_{Q}^{(\neq)}=\sum_{u<U}\sum_{\ff\in\FF}\sum_{\sk{\ff^{'}\in\FF\\\bfp\neq\bff}}\sum_{Q\leq q\leq2Q}\sum_{m,n,m^{'},n^{'}\ll \frac{uq_0}{X}}\SM(q,q_0,u_0,\xi,\zeta,\ff,\xi^{'},\zeta^{'},\ff^{'})\\
&\times\int_{-\frac{1}{qM}}^{\frac{1}{qM}}\lambda_{\ff}\left(X,\beta;\frac{m}{q_0},\frac{n}{q_0},u\right)\overline{\lambda_{\ff^{'}}\left(X,\beta;\frac{m^{'}}{q_0},\frac{n^{'}}{q_0},u\right)}d\beta
}
For $\lambda$, since the sum is supported on $x,y\asymp\frac{X}{u}$, $\lambda$ has a trivial bound $\frac{X^2}{u^2}$.  Therefore, for $\square\in\{=,\neq\}$, we have 
\al{\label{square}
\II_Q^{\square}\ll \frac{UX^4}{QM}\sum_{u<U}\frac{1}{u^4}\sum_{\ff\in\FF}\sum_{\sk{\ff^{'}\in\FF\\\bfp\square\bff}}\sum_{Q\leq q\leq2Q}\sum_{m,n,m^{'},n^{'}\ll \frac{uq_0}{X}}|\mathcal{S}|.
}
If $\bff\neq\bfp$, then we could use the bound from \eqref{twistsbound} to estimate $\SM$.  We have

\al{\label{sig1}
\nonumber\II_Q^{(\neq)}&\lle \frac{UX^4}{QM}\sum_{u<U}\frac{1}{u^4}\sum_{\ff\in\FF}\sum_{\sk{\ff^{'}\in\FF\\\bfp=\bff}}\sum_{Q\leq q\leq2Q}\sum_{m,n,m^{'},n^{'}}(\bff-\bfp,q)^{\frac{1}{4}}\left(\frac{q}{q_0}\right)^2{(q_0,\bff^2)^{\frac{1}{2}}(q_0,\bfp^2)^{\frac{1}{2}}}q^{-\frac{5}{4}+\epsilon}\\
&\lle \frac{N^{\epsilon}UX^4}{QM}\sum_{u<U}\frac{1}{u^4}\sum_{\ff\in\FF}\sum_{\sk{\ff^{'}\in\FF\\\bff\neq\bfp}}\sum_{Q\leq q\leq2Q}\left(\frac{uq_0}{X}\right)^4T^{\frac{9}{4}}u^4Q^{-\frac{5}{4}}\lle T^{2(\delta-1)}N^{1+\epsilon}(T^6X^{-\frac{1}{4}}U^6)
}
where we replaced $(\bff-\bfp,q),(q_0,\bff^2)^{\frac{1}{2}}$ and $(q_0,\bfp^2)^{\frac{1}{2}}$ by $T$.  Thus we have a significant power saving for $\II_Q^{(\neq)}$.  

Next we deal with $\II_Q^{(=)}$,  we further split $\II_Q^{(=)}$ into two pieces 
$$\II_Q^{(=)}=\II_Q^{(=,=)}+\II_Q^{(=,\neq)}$$
according to whether $\ff(m,-n)=\ff^{'}(m^{'},-n^{'})$ or not. For $\II_Q^{(=,\neq)}$, we use Lemma \ref{betterbound} to bound $|\SM|$.  We have
\al{\label{iqeqeq}
\II_Q^{(=,\neq)}\ll \frac{UX^4}{QM}\sum_{u<U}\sum_{Q\leq q\leq2Q}\sum_{m,n,m^{'},n^{'}\ll\frac{uq_0}{X}}\sum_{\sk{\ff,\ff^{'}\in\FF\\\bff=\bfp\\\ff(m,-n)\neq\ff^{'}(m^{'},-n^{'})}}(q_0,\bff^2)q^{-\frac{9}{8}+\epsilon}\left(\frac{q}{q_0}\right)^{\frac{17}{8}}\left|\ff(m,-n)-\ff^{'}(m^{'},-n^{'})\right|^{\frac{1}{2}}
}
Noticing that $(q_0,\bff^2)\ll T^2,\frac{q}{q_0}\ll U^2$ and $\ff(m,-n),\ff^{'}(m^{'},-n^{'})\ll T(\frac{UQ}{X})^2$, we have
\al{\label{sig2}\II_Q^{(=,\neq)}\lle \nep\frac{UX^4}{QM}UQ\left(\frac{UQ}{X}\right)^4T^{2\delta}\frac{T^2}{Q^{\frac{9}{8}}}U^{\frac{17}{4}}T^{\frac{1}{2}}\frac{UQ}{X}\lle \nep U^{\frac{45}{4}}T^{2\delta+\frac{43}{8}}X^{\frac{15}{8}},}

which is again a significant power saving.\\

Next we deal with $\II_Q^{(=,=)}$.  This will complete our minor arc analysis.   From \eqref{twistsbound} and \eqref{square} we have
$$\II_Q^{(=,=)}\ll\frac{UX^4}{QM}\sum_{u<U}\frac{1}{u^4}\sum_{\ff\in\FF}\sum_{Q\leq q\leq2Q}\sum_{m,n\ll\frac{uq_0}{X}}\frac{(\bff^2,q)}{q}u^4\sum_{\sk{\ff^{'}\in\FF\\\bfp=\bff}}\sum_{\sk{m^{'},n^{'}\ll \frac{uq_0}{X}\\ \ff^{'}(m^{'},-n^{'})=\ff(m,-n)}}1$$

For the inner double sum we shall prove the following lemma:
\begin{lem}\label{innerdsum}
$$\sum_{\sk{\ff^{'}\in\FF\\\bfp=\bff}}\sum_{\sk{m^{'},n^{'}\ll \frac{uq_0}{X}\\ \ff^{'}(m^{'},-n^{'})={\ff}(m,-n)}}1\lle N^{\epsilon}\left(\fff(m,-n),-8\bff^2\right)^{\frac{1}{2}} $$
\end{lem}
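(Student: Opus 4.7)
Set $N_0 := \fff(m,-n)$ and $D := -8\bff^2$; the constraint $\bfp = \bff$ forces $\fff^{'}$ to be a positive-definite integral binary quadratic form of discriminant $D$, the same as $\fff$. The plan is to invoke the classical Gauss correspondence between binary quadratic forms and ideal classes in orders of imaginary quadratic fields, thereby reducing the inner sum to an ideal-counting problem in the quadratic order $\mathcal{O} := \ZZ[\bff\sqrt{-2}]$ of discriminant $D$ inside $\mathbb{Q}(\sqrt{-2})$.

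First I would remove the constraint $\ff^{'}\in\FF$. Since the vector $\bd{v}_{\ff^{'}} = \ff^{'}(\bd{r})$ is uniquely determined by $(A_{\ff^{'}}, B_{\ff^{'}}, C_{\ff^{'}}, \bfp)$ via the relations in Theorem~\ref{shiftedquadraticform}, and since $\mathcal{A}$ is free, each pair $(\fff^{'}, \bfp)$ corresponds to at most one $\ff^{'}\in\mathcal{A}$. Thus the sum is bounded by the total number of pairs $(\fff^{'}, (m^{'},n^{'}))$ with $\fff^{'}$ an integral positive-definite binary quadratic form of discriminant $D$ and $\fff^{'}(m^{'},-n^{'}) = N_0$ (dropping the size restriction on $(m^{'},n^{'})$ only increases the count). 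By Gauss's theorem, the contribution of primitive forms equals the number of proper $\mathcal{O}$-ideals of norm $N_0$, counted modulo the unit group $\{\pm 1\}$; imprimitive forms $\fff^{'} = d\cdot g$ with $d\mid(\bff, N_0)$ and $g$ primitive of discriminant $-8(\bff/d)^2$ contribute, for each such $d$, the analogous ideal count of norm $N_0/d$ in the suborder $\ZZ[(\bff/d)\sqrt{-2}]$.

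These ideal counts are multiplicative in the norm, so it suffices to bound each local factor. At primes $p \nmid \bff$, the localization $\mathcal{O}_p$ coincides with the maximal order $\ZZ_p[\sqrt{-2}]$, and the number of ideals of norm $p^k$ is bounded by the usual divisor bound $\leq k+1$, which contributes $\ll N^\epsilon$ over all primes. At primes $p\mid \bff$ with $e := v_p(\bff)$, I would enumerate $\mathcal{O}_p$-submodules of $\mathcal{O}_p$ of index $p^k$ directly: writing such a submodule in Hermite normal form relative to the $\ZZ_p$-basis $\{1,\bff\sqrt{-2}\}$ and imposing closure under multiplication by $\bff\sqrt{-2}$ yields a count $\ll p^{\min(e,\lfloor k/2\rfloor)}$. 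Since $v_p((N_0,\bff^2)^{1/2}) = \min(e,\lfloor k/2\rfloor)$, multiplying the local contributions (and summing over the $\ll N^\epsilon$ divisors $d$ from imprimitive forms) yields the claimed bound $\lle N^\epsilon (N_0,\bff^2)^{1/2} \asymp N^\epsilon(\fff(m,-n),-8\bff^2)^{1/2}$.

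\textbf{Main obstacle.} The delicate step is the local enumeration at the conductor primes $p\mid \bff$, specifically confirming the square-root-type bound $\ll p^{\min(e,\lfloor k/2\rfloor)}$ by a concrete count of $\bff\sqrt{-2}$-stable sublattices of $\mathcal{O}_p$ in the non-maximal order setting. The prime $p=2$ requires additional attention because $\sqrt{-2}$ is already ramified at $2$ in the maximal order, slightly altering the local combinatorics; nonetheless, since $v_2(\bff) \ll \log N$, the contribution of $p=2$ can be absorbed into the $N^\epsilon$ factor. A minor secondary point is book-keeping the divisor sum over imprimitive forms to ensure the clean gcd bound survives intact rather than inflating to a higher power of the gcd.
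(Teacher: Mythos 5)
Your proposal has a genuine gap at the reduction step, and it is precisely the step the paper addresses in its Claim~2. You write that after noting the injectivity of $\ff'\mapsto(\fff',\bfp)$ you ``remove the constraint $\ff'\in\FF$'' and bound the sum by ``the total number of pairs $(\fff',(m',n'))$ with $\fff'$ an integral positive-definite binary quadratic form of discriminant $D$ and $\fff'(m',-n')=N_0$.'' But that set is \emph{infinite}: for each $\SL_2(\ZZ)$-equivalence class of forms of discriminant $D$ there are infinitely many forms (the whole $\SL_2(\ZZ)$-orbit), and each one represents $N_0$ the same finite number of times. The Gauss correspondence you then invoke is a bijection between proper $\mathcal{O}$-ideals of norm $N_0$ and \emph{$\SL_2(\ZZ)$-orbits} of pairs $(\fff',(m',n'))$ (equivalently, reduced forms with a representation, up to $\{\pm I\}$). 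It does not count individual forms, so the claim that ``the contribution of primitive forms equals the number of proper $\mathcal{O}$-ideals of norm $N_0$'' is false for the set you just described. To close the gap you must keep the constraint $\ff'\in\FF$ and show that each equivalence class contains only $O(1)$ forms arising from $\FF$. The paper proves exactly this (Claim~2), using the size constraints built into $\FF$: one has $A_{\ff'},B_{\ff'},C_{\ff'}\ll T$ while $\bff\asymp T$, and comparing $A''=g^2A'+2giB'+i^2C'=A'(g+iB'/A')^2+i^2\cdot 2\bff^2/A'$ forces the entries of the change-of-variables matrix to be $O(1)$. Without this, the deduction from equivalence classes (ideal count) back to the sum over $\ff'\in\FF$ is not justified.

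Aside from this gap, the rest of your plan is a legitimate repackaging of the paper's remaining steps. The paper's Claim~1 directly counts solutions of $B_0^2\equiv -8\bff^2\pmod{z}$ (after reducing to a form $zx^2+B_0xy+C_0y^2$ and using CRT and Hensel), and the paper's Claim~3 uses the factorization of $Az$ in $\ZZ[\sqrt{-2}]$ and the divisor bound. Your ideal-theoretic formulation---counting proper $\mathcal{O}$-ideals of norm $N_0$ and working locally at the conductor primes $p\mid\bff$---amounts to the same Hensel/CRT analysis in different language and recovers the same GCD factor $(\fff(m,-n),-8\bff^2)^{1/2}$; your worry about $p=2$ being ramified is reasonable but, as you note, absorbable into $N^\epsilon$. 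So the approach is salvageable, but as written it omits the one piece of information (the $\FF$ size constraints) that makes the count finite.
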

 \begin{proof}
 This lemma will follow from the following three claims.  \\
 
$\bd{Claim}$ 1:  The number of classes of equivalent quadratic forms having discriminant $-8b_{\ff}^2$ and representing the integer $z=\tilde{\ff}(m,-n)$ is bounded by $\nep(\tilde{\ff}(m,-n),-8\bff^2)^{\frac{1}{2}}$.\\

Suppose $z=\tilde{\ff}(m,-n)$ is primitively represented by a quadratic form $\ff_0$ (i.e. $(m,n)=1$), then $\ff_0$ is equivalent to a quadratic form $zx^2+B_0xy+C_0y^2$, with $|B_0|<z$.   Now since $B_0^2-4zC_0 =-8b_{\ff}^2$, we have 
$B_0^2\equiv -8\bff^2(z)$.  From the Chinese Remainder Theorem, the number of solutions of
\al{\label{zequation}B_0^2\equiv -8\bff^2(z)} is the the product of the numbers of solutions of 
\al{\label{pequation}B_0^2\equiv -8\bff^2(p_i^{n_i})} for each $p_i^{n_i}||z$.\\

If $\left(\frac{-2}{p_i^{n_i}}\right)=-1$, then there's no solution to \eqref{pequation}.  If $\left(\frac{-2}{p_i^{n_i}}\right)=1$, let $-8b_{\ff}^2\equiv kp_i^{l_i}(p_i^{n_i})$ where $0\leq l_i\leq n_i$ and $(k,p_i)=1$.  Noticing that $l_i$ is even, all the solutions of \eqref{pequation} are given by
$$\pm p^{\frac{l_i}{2}}l+p^{n_i-\frac{l_i}{2}}s,$$
where $l$ is a solution of 
\al{\label{lequation}l^2\equiv \frac{-8b_{\ff}^2}{p_i^{l_i}}(p^{n_i-l_i})}
and $0\leq s\leq p^{\frac{l_i}{2}}-1$.  Thus we see there are at most $2p^{\frac{l_i}{2}}$ such solutions to \eqref{pequation}.  By multiplicativity, the number of solutions of  \eqref{zequation} is bounded by $2^{w(\fff(m,-n))}(\fff(m,-n),-8\bff^2)^{\frac{1}{2}}$.  Therefore, our choices for $B_0$ is at most $2^{w(\tilde{\ff}(m,-n))+1}(\fff(m,-n),-8\bff^2)^{\frac{1}{2}}$. If $z$ is not primitively represented by $\ff_0$, then a divisor $z_0$ of $z$ is primitively represented.  There are at most $d(z)$ many such cases, and the bound $2^{w(\fff(m,-n))+1}(\fff(m,-n),-8\bff^2)^{\frac{1}{2}}$ works for each case.  Thus Claim 1 follows. \\

$\bd{Claim}$ 2: In each equivalent class in $\FF$, the number of equivalent quadratic forms is bounded:
Suppose $\ff^{'}=(A^{'},2B^{'},C^{'})$ and $\ff^{''}=(A^{''},2B^{''},C^{''})$ are two equivalent quadratic forms in $\FF$, then we can find $\mat{g&h\\i&j}\in SL(2,\ZZ)\cup \mat{1&0\\0&-1}SL(2,\ZZ)$ such that
\al{
\nonumber&A^{''}=g^2A^{'}+2giB^{'}+i^2C^{'},\\
\nonumber&B^{''}=ghA^{'}+(gi+hj)B^{'}+ijC^{'},\\
&C^{''}=h^2A^{'}+2hjB^{'}+j^2C^{'}
}
The first equation above can be rewritten as 
$A^{''}=A^{'}\left(g+i\frac{B^{'}}{A^{'}}\right)^2+i^2\frac{2\bff^2}{A^{'}}$.  So from $i^2\frac{2\bff^2}{A^{'}}\leq A^{''}$, $\bff\asymp T$, $A^{'},A^{''}\ll T$,  we know $i\ll 1$, and $A^{'},A^{''}\gg T$.  Then from $A^{'}\left(g+i\frac{B^{'}}{A^{'}}\right)^2\leq A^{''}\ll T $ we also know $g\ll1$.  Similarly $h,j\ll 1$,  so the number of quadratic forms in $\FF$ in each equivalent class  is bounded. Therefore Claim 2 holds.\\

 $\bd{Claim}$ 3: given an integer $z\ll N$ and a quadratic form $\ff$ of discriminant $-8\bff^2$, there are at most $N^{\epsilon}$ pairs of integers $m,n$ such that $\ff(m,-n)=z$.\\

This is because $Am^2-2Bmn+Cn^2=z$ can be rewritten as
$$(Am+(B+\sqrt{-2}\bff)n)(Am+(B-\sqrt{-2}\bff)n)=Az$$
Since $Az\ll N^2$, the number of divisors of $Az$ in $\ZZ[\sqrt{2}i]$ is bounded by $N^{\epsilon}$.  The pairs $(m,n)$ can be identified with $Am+(B+\sqrt{-2}\bff)n$, which is a divisor of $Az$. Therefore, Claim 3 also holds.\\

Our lemma then follows Claims 1, Claim 2 and Claim 3.
 \end{proof}
 
 We need the following final ingredient to estimate $\mathcal{I}_Q^{(=,=)}$:
 \begin{lem} \label{finalingrediant}Given a primitive quadratic form $(A,2B,C)$ of discriminant $-8b_{\ff}^2$, for any $d|2b_{\ff}^2$, and any integer $W>0$, we have
 $$\sum_{\sk{m,n\leq W\\Am^2-2Bmn+Cn^2\equiv 0(d)}}1\ll W^2d^{-\frac{1}{2}}+W$$
 The implied constant is absolute.
 \end{lem}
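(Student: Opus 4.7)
The plan is to reduce the congruence $f(m,n)\equiv 0\pmod d$, where $f(m,n)=Am^{2}-2Bmn+Cn^{2}$, to a single linear congruence and then apply an elementary lattice-point count. The starting point is the identity
$$A\,f(m,n)=(Am-Bn)^{2}+2\bff^{2}n^{2}.$$

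\textbf{Step 1 (Coprimality from primitivity).} I first observe that for each prime $p|d$, at least one of $A,C$ is coprime to $p$. For odd $p$, the hypothesis $d|2\bff^{2}$ forces $p|\bff$, so $B^{2}=AC-2\bff^{2}\equiv AC\pmod{p}$; if $p$ divided both $A$ and $C$ it would also divide $B$, contradicting $\gcd(A,2B,C)=1$. For $p=2$, primitivity forces one of $A,C$ to be odd since $2|2B$ automatically. Working prime by prime via CRT, I swap $m\leftrightarrow n$ (which exchanges $A$ and $C$ while preserving $B$ and $\bff$) at those primes where $(A,p)>1$, so that henceforth $(A,p)=1$ at every prime dividing $d$.

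\textbf{Step 2 (Linearization).} Fix $p^{k}||d$. Since $d|2\bff^{2}$ we have $v_{p}(2\bff^{2}n^{2})\geq v_{p}(2\bff^{2})\geq k$, whence $2\bff^{2}n^{2}\equiv 0\pmod{p^{k}}$ for every $n$. Combined with $(A,p)=1$, the identity above shows that $f(m,n)\equiv 0\pmod{p^{k}}$ is equivalent to $(Am-Bn)^{2}\equiv 0\pmod{p^{k}}$, which is in turn equivalent to the \emph{linear} congruence
$$Am-Bn\equiv 0\pmod{p^{\lceil k/2\rceil}}.$$

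\textbf{Step 3 (Sublattice index).} By CRT the solution set
$$L:=\{(m,n)\in\ZZ^{2}:f(m,n)\equiv 0\pmod d\}$$
is a full-rank sublattice of $\ZZ^{2}$. Because $(A,p)=1$ at each prime, the linear form $Am-Bn$ is surjective onto $\ZZ/p^{\lceil k/2\rceil}\ZZ$, so the condition of Step 2 cuts out a subgroup of $(\ZZ/p^{k}\ZZ)^{2}$ of index exactly $p^{\lceil k/2\rceil}$, and thus
$$[\ZZ^{2}:L]\;=\;d'\;:=\;\prod_{p^{k}||d}p^{\lceil k/2\rceil}\;\geq\;d^{1/2}.$$

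\textbf{Step 4 (Lattice-point count).} Since $L\subseteq\ZZ^{2}$, its first successive minimum obeys $\lambda_{1}(L)\geq 1$. The standard geometry-of-numbers estimate applied to the square $[1,W]^{2}$ yields
$$|L\cap [1,W]^{2}|\;\leq\;\frac{W^{2}}{d'}+O\!\left(\frac{W}{\lambda_{1}(L)}\right)+O(1)\;\leq\;\frac{W^{2}}{\sqrt{d}}+O(W),$$
which is the claimed bound.

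The main (modest) obstacle lies in Step 1: without ensuring $(A,p)=1$ at every prime $p|d$, the linear congruence in Step 2 would define a subgroup of strictly smaller index at that prime, and the critical inequality $[\ZZ^{2}:L]\geq\sqrt{d}$ in Step 3 would fail; the primitivity of $(A,2B,C)$ and the discriminant identity $B^{2}-AC=-2\bff^{2}$ are exactly what make the local swap available.
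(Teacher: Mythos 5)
Your proof is correct and follows essentially the same strategy as the paper's: use $d\mid 2b_{\ff}^2$ together with the discriminant identity $B^2-AC=-2b_{\ff}^2$ to reduce the quadratic congruence to a linear one modulo a modulus of size at least $d^{1/2}$, then count. The presentation differs only cosmetically — you linearize via the explicit identity $Af(m,n)=(Am-Bn)^2+2b_{\ff}^2n^2$ and invoke a lattice-index/geometry-of-numbers estimate, whereas the paper diagonalizes the form modulo $d$ by an $SL_2(\ZZ)$ element built via CRT and counts solutions of the resulting linear congruence by hand — and, if anything, your Step 1 is tighter, since the paper's phrase ``at least one of $A,B,C$ is coprime to $p$'' does not by itself cover the case where only $B$ is a unit, while your use of $B^2\equiv AC\pmod p$ shows one of $A,C$ must be a unit, which is exactly what the linearization needs.
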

\begin{proof}
First we show that $\exists \gamma=\mat{i&j\\g&h}\in SL(2,\ZZ)$ and $\tilde{A},\tilde{B},\tilde{C}\in\ZZ$ such that 
$$Ax^2+2Bxy+Cy^2=\tilde{A}(ix+gy)^2+\tilde{B}(ij+gh)xy+\tilde{C}(jx+hy)^2$$
and $$(\tilde{A},-2\bff^2)=1,\tilde{B}\equiv\tilde{C}\equiv 0(d).$$
Indeed, for each $p_i^{n_i}||d$, since $\ff$ is primitive, at least one of $A,B,C$ can not be divided by $p$.  For example, if $(A,p)=1$, then
$$Ax^2+2Bxy+Cy^2\equiv A(x+B\bar{A}y)^2+2\bff^2\bar{A}y^2\equiv A(x+B\bar{A}y)^2.$$
We set $$\gamma_{p_i^{n_i}}:=\mat{1&B\bar{A}\\0&1}\in SL(2,\ZZ/p_i^{n_i}\ZZ)$$ so
$\gamma_{p_i^{n_i}}(A,2B,C)=(A,0,0)(p_i^{n_i})$.  Now from the Chinese remainder theorem, we could find $\gamma_d\in SL(2,\ZZ/d\ZZ)$ such that $\gamma_d\equiv \gamma_{p_i^{n_i}}$ in $SL(2,\ZZ/p_i^{n_i})$ for each $p_i^{n_i}||d$.  Since $(\tilde{A},d)=1$ and $\tilde{B}\equiv 0(d)$, it forces $\tilde{C}\equiv 0(d)$.  
Therefore,
$$\sum_{\sk{m,n\leq W\\\ff(m,-n)\equiv 0(d)}}1= \sum_{\sk{m,n\leq W\\(im+gn)^2\equiv 0(d)}}{1}$$
If $(im+gn)^2\equiv 0(d)$, then $im+gn$ can be parametrized by $sd_0$, where $s\in\ZZ$ and $d_0\geq d^{\frac{1}{2}}$.  Therefore, we have
\al{\label{final}im+gn\equiv 0(d_0)}
For the above equation to have a solution, since $(i,g)=1$, $gn$ should be of the form $k(i,d_0)$ where $k\in\ZZ$, so there are at most $\frac{W}{(i,d_0)}+1$ choices for $n$.  Fixing such an $n$, 
\eqref{final} can be reduced to 
$$\frac{i}{(i,d_0)}m\equiv k\left(\text{mod } \frac{d_0}{(i,d_0)}\right).$$
There are at most $\frac{W}{\frac{i}{d_0}}+1$ such choices for $m$.  Therefore,
$$\sum_{\sk{m,n\leq W\\ \ff(m,-n)\equiv 0(d)}}1= \sum_{\sk{m,n\leq W\\(im+gn)^2\equiv 0(d)}}{1}\ll \left(\frac{W}{(i,d_0)}+1\right)\left(\frac{W}{\frac{d_0}{(i,d_0)}}+1\right)\ll W^2d^{-\frac{1}{2}}+W.$$
\end{proof}
Now we can show that  
\begin{lem}
$$\II_Q^{(=,=)}\llet T^{2\delta-1}X^2N^{-\eta}$$
\end{lem}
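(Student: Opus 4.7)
Starting from the displayed upper bound for $\II_Q^{(=,=)}$, the plan is to first apply Lemma \ref{innerdsum} to the inner double sum over $(\ff',m',n')$, replacing it by $\nep\,(\fff(m,-n),-8\bff^2)^{1/2}$. This collapses the problem to controlling
\aln{\Sigma(\ff,W):=\sum_{m,n\ll W}(\fff(m,-n),-8\bff^2)^{1/2},\qquad W=\frac{uq_0}{X},}
for each fixed $\ff\in\FF$.

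The central step will be a divisor expansion for $\Sigma(\ff,W)$: write
\aln{\Sigma(\ff,W)\le\sum_{d\mid 8\bff^2}\sqrt{d}\cdot\#\{(m,n)\ll W:d\mid\fff(m,-n)\},}
after a mild preparatory reduction of $\fff$ to its primitive part (the gcd of the coefficients divides $2\bff$, and the ensuing rescaling is absorbed into the $\nep$ factors). Then I would invoke Lemma \ref{finalingrediant}, which bounds each inner cardinality by $W^2 d^{-1/2}+W$. Using $\tau(8\bff^2)\lle\nep$ and $\sum_{d\mid 8\bff^2}\sqrt{d}\lle\nep\bff\lle\nep T$ (since $\bff\asymp T$ by the construction of $\FF$), one obtains
\aln{\Sigma(\ff,W)\lle\nep(W^2+WT).}

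The remaining work is bookkeeping. Plugging this into the displayed bound for $\II_Q^{(=,=)}$, using $q_0\le q\le 2Q$ so that $W\ll uQ/X$, the elementary estimate $\sum_{Q\le q\le 2Q}(\bff^2,q)/q\lle\nep$, the crude count $|\FF|\ll T^\delta$, and trivial summation over $u<U$, I expect to arrive at
\aln{\II_Q^{(=,=)}\lle\nep\left(T^{\delta-1}U^4QX+T^\delta U^3X^2\right).}
Bounding $Q\le M=TX$ collapses both terms into $\nep\,T^\delta U^4 X^2$, and since $U$ is a tiny power of $N$ by \eqref{determine} while $\delta>1$, this is dominated by $T^{2\delta-1}X^2N^{-\eta}$, yielding the required power saving.

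The hard part will be the $\Sigma(\ff,W)$ estimate itself. Unlike in the treatment of $\II_Q^{(=,\neq)}$, the improved Kloosterman bound of Lemma \ref{betterbound} offers \emph{no} gain on the diagonal $\fff(m,-n)=\fff'(m',-n')$, so the entire saving in $\II_Q^{(=,=)}$ must come from combining the representation-count bound of Lemma \ref{innerdsum} with the divisor-level cancellation of Lemma \ref{finalingrediant}. The delicate point is that the $\sqrt{d}$-weight in the divisor expansion could naively accumulate to a full $\bff\asymp T$; one must ensure this $T$-factor is paid for by the decisive $W^2 d^{-1/2}$ term of Lemma \ref{finalingrediant} (which contributes only $\tau(8\bff^2)W^2\lle\nep W^2$), rather than by the weaker linear-in-$W$ term, so that the $T$ appears only in the affordable second piece of $\nep(W^2+WT)$.
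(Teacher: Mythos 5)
Your proof follows the paper's argument exactly: apply Lemma \ref{innerdsum} to collapse the $(\ff',m',n')$-sum to $\nep(\fff(m,-n),-8\bff^2)^{1/2}$, expand in divisors $d\mid 8\bff^2$, apply Lemma \ref{finalingrediant} to each congruence count, and sum to get $\nep(W^2+WT)$, leading (after the bookkeeping with $\sum_q(\bff^2,q)/q\lle\nep$, $|\FF|\ll T^\delta$, $M=TX$) to the same final bound $\nep U^4X^2T^\delta = \nep U^4X^2T^{2\delta-1}T^{1-\delta}$ with $T^{1-\delta}$ providing the power saving. Your intermediate two-term bound and the observation that the decisive $W^2d^{-1/2}$ term must absorb the $\sqrt{d}$-weight are exactly how the paper's chain of inequalities works; one small caveat is that your parenthetical justification for primitivity (gcd divides $2\bff$, hence absorbed into $\nep$) is not itself a proof — divisibility by $2\bff\asymp T$ would not be negligible — but in fact the primitivity of the quadruple $\bd{r}$ together with $Q(\bd{r})=0$ forces the content of $\fff$ to be $O(1)$, so the conclusion stands.
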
 
 \begin{proof}
 Applying Lemma \ref{innerdsum} and Lemma \ref{finalingrediant} to \eqref{iqeqeq} with $W=\frac{uq_0}{X}$, we have
 \al{\label{eqeq}\nonumber
 \II_Q^{(=,=)}&\lle \frac{N^{\epsilon}UX^4}{QM}\sum_{u<U}\frac{1}{u^4}\sum_{\ff\in\FF}\sum_{Q\leq q\leq 2Q}\sum_{m,n\ll\frac{uq_0}{X}}\frac{(\bff^2,q)}{q}u^4(\ff(m,-n),-8\bff^2)^{\frac{1}{2}}\\
&\nonumber \lle \frac{N^{\epsilon}UX^4}{QM}\sum_{u<U}\sum_{\ff\in\FF}\sum_{Q\leq q\leq 2Q}\frac{(\bff^2,q)}{q}\sum_{m,n\ll\frac{uq_0}{X}}(\ff(m,-n),-2\bff^2)^{\frac{1}{2}}\\
 &\nonumber\lle  \frac{N^{\epsilon}UX^4}{QM}\sum_{u<U}\sum_{\ff\in\FF}\sum_{Q\leq q\leq 2Q}\frac{(\bff^2,q)}{q} \sum_{d_1|-2\bff^2}d_1^{\frac{1}{2}}\sum_{\sk{m,n\ll\frac{uq_0}{X}\\\ff(m,-n)\equiv 0(d_1)}}1\\
 &\nonumber\lle \frac{N^{\epsilon}UX^4}{QM}\sum_{u<U}\sum_{\ff\in\FF}\sum_{Q\leq q\leq 2Q}\frac{(\bff^2,q)}{q} \sum_{d_1|-2\bff^2}d_1^{\frac{1}{2}} \left(\left(\frac{uq_0}{X}\right)^2d_1^{-\frac{1}{2}}+\frac{uq_0}{X}\right)\\
 &\nonumber\lle  \frac{N^{\epsilon}U^4X^4}{QM}\sum_{\ff\in\FF}\sum_{Q\leq q\leq 2Q}\frac{(\bff^2,q)}{q}\cdot\frac{Tq}{X}\\
 &\nonumber\lle  \frac{N^{\epsilon}U^4X^3T}{QM}\sum_{\ff\in\FF}\sum_{d_2|b_{\ff}^2}d_2\sum_{\sk{Q\leq q\leq 2Q\\q\equiv 0(d_2)}}1\\
 &\lle N^{\epsilon}U^4X^2T^{\delta}\lle N^{\epsilon}U^4X^2T^{2\delta-1}T^{1-\delta}
 }
 Therefore, we have a power saving here.
 \end{proof}
 From \eqref{sig1}, \eqref{sig2} and \eqref{eqeq} we obtain
 \begin{lem}\label{i3}
 $$\mathcal{I}_3\llet T^{2\delta-1}X^2N^{-\eta}.$$
 \end{lem}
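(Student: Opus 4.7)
The plan is to assemble $\mathcal{I}_3$ from the three dyadic-scale estimates \eqref{sig1}, \eqref{sig2}, and \eqref{eqeq} that have already been established in the course of the previous analysis. First, I would dyadically decompose
\[
\mathcal{I}_3 \ll (\log N)\max_{X \leq Q \leq M}\mathcal{I}_Q,
\]
where $Q$ ranges over dyadic values, and then invoke the by-now familiar three-way split
\[
\mathcal{I}_Q \ll \mathcal{I}_Q^{(\neq)} + \mathcal{I}_Q^{(=,\neq)} + \mathcal{I}_Q^{(=,=)},
\]
which separates pairs $(\ff,\ff')\in\FF\times\FF$ according to whether $b_\ff=b_{\ff'}$ and, when they are equal, whether $\ff(m,-n)=\ff'(m',-n')$. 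Each of the three pieces has already been controlled by an estimate with a genuine power saving.

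To see that the target bound $T^{2\delta-1}X^2N^{-\eta}$ is attained, I would verify the power saving in each of the three sub-bounds using the calibrations $N=TX^2$, $T=N^{1/200}$, $X=N^{199/400}$, together with the choice of $U$ as a sufficiently small fixed power of $N$ fixed in \eqref{determine}. The bound \eqref{sig1} carries an extra factor of $T^{6}X^{-1/4}U^{6}$ beyond $T^{2(\delta-1)}N$, which is $N^{-\eta_1}$ for some fixed $\eta_1>0$. The bound \eqref{sig2} carries an extra factor of $T^{51/8}X^{-1/8}U^{45/4}$ beyond $T^{2\delta-1}X^2$, which is $N^{-\eta_2}$. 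The bound \eqref{eqeq} carries an extra factor of $T^{1-\delta}U^{4}$ beyond $T^{2\delta-1}X^2$, and here the saving is the most transparent: it is $N^{-\eta_3}$ directly because $\delta>1$.

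Setting $\eta$ equal to half of $\min(\eta_1,\eta_2,\eta_3)$ absorbs both the $\log N$ from the dyadic sum and the $N^{\epsilon}$ losses that appear in the three sub-estimates, yielding
\[
\mathcal{I}_3 \llet T^{2\delta-1}X^2N^{-\eta},
\]
as desired. There is essentially no obstacle at this final stage: all the substantive work, namely the Kloosterman $3/4$-bound built into \eqref{twistsbound}, the sharper twisted bound in the regime $b_\ff=b_{\ff'}$ proved in Lemma \ref{betterbound}, the quadratic-form counting of Lemma \ref{innerdsum}, and the modular lattice point count of Lemma \ref{finalingrediant}, has already been absorbed into the three sub-bounds. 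Lemma \ref{i3} is the clean-up step that collects them into a single power-saving estimate for the outermost minor-arc range.
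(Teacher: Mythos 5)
Your argument is correct and matches the paper's own proof, which is essentially the one-line collection of the sub-estimates \eqref{sig1}, \eqref{sig2}, and \eqref{eqeq} already obtained in the section. Your explicit arithmetic check of the exponents, using $T=N^{1/200}$, $X=N^{199/400}$, and $U$ a tiny power of $N$ from \eqref{determine}, fills in the verification that the paper leaves implicit and correctly confirms a genuine power saving in each of the three pieces.
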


 \subsection{Proof of Theorem \ref{mainthm} }
 We are now ready to give the proof of Theorem \ref{mainthm} following the strategy at the end of \S 3.1.   
 \begin{proof}[Proof of Theorem \ref{mainthm}]
 
 From Lemma \ref{sdm} we know that 
 $$\sum_{\frac{n}{2}<n<N}|\rn-\rnu(n)|\ll_{\epsilon}\frac{T^{\delta} X^{2+\epsilon}}{U}\llet T^{\delta}X^2N^{-\eta}.$$
 From Lemma \ref{i1}, Lemma \ref{i2} and Lemma \ref{i3} we know that
 $$\sum_{\frac{n}{2}<n<N}|\enu(n)|^2\leq\int_{-1}^{1}|(1-\frak{T}(\theta))\hrnu(\theta)|^2d\theta\llet T^{2\delta-1}X^2N^{-\eta}.$$
 By Cauchy inequality, we then have
 $$\sum_{\frac{n}{2}<n<N}|\enu(n)|\llet T^{\delta}X^2N^{-\eta}.$$
 From Lemma \ref{smm}, we also have
 $$\sum_{n<N}|\mnn-\mnu(n)|\llet T^{\delta}X^2N^{-\eta}.$$
 Since $\mn=\mathcal{R}_N+\en$ and $\mnu=\rnu+\enu$, we then have
 $$\sum_{n<N}|\en(n)-\enu(n)|\llet T^{\delta-1}X^2N^{-\eta}.$$
 As a result,  $$\sum_{n<N}|\en(n)|\llet T^{\delta}X^2N^{-\eta}.$$
 Let $Z$ be the exceptional subset of $\{n|n\equiv \kappa_1(\text{mod }8)\}\cap(\frac{N}{2},N)$ consisting of all numbers which are not represented by our ensemble $\FF$.  Then for $z\in Z$, we have $\mn(z)\gg_{\epsilon} N^{-\epsilon}T^{\delta-1}$.  Since $\mathcal{R}_N(z)=0$, we have $|\en(z)|\gg_{\epsilon} N^{-\epsilon}T^{\delta-1}$.\\
 
 Therefore,  $$|Z|T^{\delta-1}N^{-\epsilon}\lle{\sum_{n\in Z}}|\en(z)|\llet T^{\delta}X^2N^{-\eta}.$$
 So  $|Z|\ll N^{1-\eta}$, and we prove the density one theorem for the $C_1$-orbit under $\Gamma$.   There are six orbits in $\mathcal{P}$, namely $C_1,C_2,C_3,C_{1^{'}},C_{2^{'}},C_{3^{'}}$.  We can prove the same conclusion for every orbit simply by changing the order of components of $\bd{r}$ or $\bd{r^{'}}$.  Thus Theorem \ref{mainthm} follows.  
 \end{proof}
 
 $\bd{Acknowlegement}$
This paper is essentially the content of the author's PhD thesis when he was a graduate student at Stony Brook. The author has a great many thanks to his PhD advisor, Prof. Alex Kontorovich for introducing this beautiful subject to the author and numerous enlightening discussions. The author also thanks the referee for her/his numerous corrections and helpful suggestions when the first edition of this paper was submitted.  In writing up this paper, the author utilizes the codes provided by Prof. Kontorovich for several pictures.  In addition, the author acknowledges support for this work from Prof. Kontorovich's NSF grants DMS-1209373, DMS-1064214, DMS-1001252 and his NSF CAREER grant DMS-1254788.

\bibliographystyle{plain}
\bibliography{Apollonian-3}

\begin{thebibliography}{10}

\bibitem{BF12}
Jean Bourgain and Elena Fuchs.
\newblock A proof of the positive density conjecture for integer {A}pollonian
  circle packings.
\newblock {\em J. Amer. Math. Soc.}, 24(4):945--967, 2011.

\bibitem{BGS10}
Jean Bourgain, Alex Gamburd, and Peter Sarnak.
\newblock Affine linear sieve, expanders, and sum-product.
\newblock {\em Invent. Math.}, 179(3):559--644, 2010.

\bibitem{BGS11}
Jean Bourgain, Alex Gamburd, and Peter Sarnak.
\newblock Generalization of {S}elberg's {$\frac{3}{16}$} theorem and affine
  sieve.
\newblock {\em Acta Math.}, 207(2):255--290, 2011.

\bibitem{BK13}
Jean Bourgain and Alex Kontorovich.
\newblock On the local-global conjecture for integral {A}pollonian gasket.
\newblock {\em Invent. Math.}, July 2013.

\bibitem{Da67}
Harold Davenport.
\newblock {\em Multiplicative number theory}, volume 1966 of {\em Lectures
  given at the University of Michigan, Winter Term}.
\newblock Markham Publishing Co., Chicago, Ill., 1967.

\bibitem{EGJ98}
J.~Elstrodt, F.~Grunewald, and J.~Mennicke.
\newblock {\em Groups acting on hyperbolic space}.
\newblock Springer Monographs in Mathematics. Springer-Verlag, Berlin, 1998.
\newblock Harmonic analysis and number theory.

\bibitem{Fu10}
Elena Fuchs.
\newblock {\em Arithmetic properties of {A}pollonian circle packings}.
\newblock ProQuest LLC, Ann Arbor, MI, 2010.
\newblock Thesis (Ph.D.)--Princeton University.

\bibitem{GV12}
A.~Salehi Golsefidy and P{{\'e}}ter~P. Varj{{\'u}}.
\newblock Expansion in perfect groups.
\newblock {\em Geom. Funct. Anal.}, 22(6):1832--1891, 2012.

\bibitem{GLMWY03}
Ronald~L. Graham, Jeffrey~C. Lagarias, Colin~L. Mallows, Allan~R. Wilks, and
  Catherine~H. Yan.
\newblock Apollonian circle packings: number theory.
\newblock {\em J. Number Theory}, 100(1):1--45, 2003.

\bibitem{GLMWY05}
Ronald~L. Graham, Jeffrey~C. Lagarias, Colin~L. Mallows, Allan~R. Wilks, and
  Catherine~H. Yan.
\newblock Apollonian circle packings: geometry and group theory. {I}. {T}he
  {A}pollonian group.
\newblock {\em Discrete Comput. Geom.}, 34(4):547--585, 2005.

\bibitem{GM08}
Gerhard Guettler and Colin Mallows.
\newblock A generalization of {A}pollonian packing of circles.
\newblock {\em J. Comb.}, 1(1, [ISSN 1097-959X on cover]):1--27, 2010.

\bibitem{IK04}
Henryk Iwaniec and Emmanuel Kowalski.
\newblock {\em Analytic number theory}, volume~53 of {\em American Mathematical
  Society Colloquium Publications}.
\newblock American Mathematical Society, Providence, RI, 2004.

\bibitem{Ki11}
Inkang Kim.
\newblock Counting, mixing and equidistribution of horospheres in geometrically
  finite rank one locally symmetric manifolds.
\newblock 03 2011.

\bibitem{Kl27}
H.~D. Kloosterman.
\newblock On the representation of numbers in the form {$ax^2+by^2+cz^2+dt^2$}.
\newblock {\em Acta Math.}, 49(3-4):407--464, 1927.

\bibitem{KO11}
Alex Kontorovich and Hee Oh.
\newblock Apollonian circle packings and closed horospheres on hyperbolic
  3-manifolds.
\newblock {\em J. Amer. Math. Soc.}, 24(3):603--648, 2011.
\newblock With an appendix by Oh and Nimish Shah.

\bibitem{La67}
D.~G. Larman.
\newblock On the {B}esicovitch dimension of the residual set of arbitrarily
  packed disks in the plane.
\newblock {\em J. London Math. Soc.}, 42:292--302, 1967.

\bibitem{LP82}
Peter~D. Lax and Ralph~S. Phillips.
\newblock The asymptotic distribution of lattice points in {E}uclidean and
  non-{E}uclidean spaces.
\newblock {\em J. Funct. Anal.}, 46(3):280--350, 1982.

\bibitem{MR735226}
C.~R. Matthews, L.~N. Vaserstein, and B.~Weisfeiler.
\newblock Congruence properties of {Z}ariski-dense subgroups. {I}.
\newblock {\em Proc. London Math. Soc. (3)}, 48(3):514--532, 1984.

\bibitem{Pa76}
S.~J. Patterson.
\newblock The limit set of a {F}uchsian group.
\newblock {\em Acta Math.}, 136(3-4):241--273, 1976.

\bibitem{SaLa}
Peter Sarnak.
\newblock Letter to {J}. {L}agarias about integral {A}pollonian packings, June
  2007.

\bibitem{So37}
Soddy.
\newblock The bowl of integers and hexlet.
\newblock {\em Nature}, 139(77-79), 1937.

\bibitem{Su84}
Dennis Sullivan.
\newblock Entropy, {H}ausdorff measures old and new, and limit sets of
  geometrically finite {K}leinian groups.
\newblock {\em Acta Math.}, 153(3-4):259--277, 1984.

\bibitem{Vi13}
Ilya Vinogradov.
\newblock {\em Effective bisector estimate with application to {A}pollonian
  circle packings}.
\newblock ProQuest LLC, Ann Arbor, MI, 2012.
\newblock Thesis (Ph.D.)--Princeton University.

\bibitem{We84}
Boris Weisfeiler.
\newblock Strong approximation for {Z}ariski-dense subgroups of semisimple
  algebraic groups.
\newblock {\em Ann. of Math. (2)}, 120(2):271--315, 1984.

\end{thebibliography}

\end{document}